\documentclass[11pt]{amsart}
\usepackage{amssymb}
\usepackage{latexsym} 
\usepackage{srcltx} 
\usepackage{esint}
\usepackage{xcolor}
\newcommand{\ep}{\varepsilon}

\def\nc{\newcommand}

 \def\Om{\Omega}

\def\R{\mathbf R}
\nc\pa{\partial}

\nc\CC{\mathbb{C}}
\nc\RR{\mathbb{R}}
\nc\QQ{\mathbb{Q}}
\nc\ZZ{\mathbb{Z}}
\nc\NN{\mathbb{N}}
\nc{\defeq}{\mathrel{\mathop:}=}
\nc\MM{\mathcal{M}^{+}}
\nc\M{\mathcal{M}}
\nc\capa{\mathrm{cap}}
\nc\m[1]{\left| #1\right|}
\nc\norm[1]{\left\| #1\right\|}
\hyphenation{super-harmon-ic}


\def\nc{\newcommand}

\def\ep{\epsilon}

 \def\Om{\Omega}



\def\R{\mathbf R}

\def\bea{\begin{equation}\begin{aligned}}
\def\ena{\end{aligned}\end{equation}}

\def\beas{\begin{equation*}\begin{aligned}}
\def\enas{\end{aligned}\end{equation*}}

\nc\axgrad[1]{\mathcal{A}(x, #1)}


\newtheorem{theorem}{Theorem}[section]
\newtheorem{lemma}[theorem]{Lemma}
\newtheorem{corollary}[theorem]{Corollary}
\newtheorem{proposition}[theorem]{Proposition}
\newtheorem{definition}[theorem]{Definition}
\newtheorem{remark}[theorem]{Remark}        


\numberwithin{equation}{section}


\begin{document}

\title[Uniqueness of entire solutions]{Uniqueness of entire solutions to quasilinear equations of $p$-Laplace type}


\author[Nguyen Cong Phuc]
{Nguyen Cong Phuc$^*$}
\address{Department of Mathematics,
Louisiana State University,
303 Lockett Hall, Baton Rouge, LA 70803, USA.}
\email{pcnguyen@math.lsu.edu}

\author[Igor E. Verbitsky]
{Igor E. Verbitsky}
\address{Department of Mathematics,
	 University of Missouri,
	Columbia, MO 65211, USA.}
\email{verbitskyi@missouri.edu}

\thanks{$^*$Supported in part by Simons Foundation, award number 426071}


\begin{abstract} 
We prove the uniqueness property for a class of entire solutions to the equation 
\begin{equation*}
\left\{ \begin{array}{ll}
-{\rm div}\, \mathcal{A}(x,\nabla u) = \sigma, \quad u\geq 0  \quad \text{in } \RR^n, \\
\displaystyle{\liminf_{|x|\rightarrow \infty}}\,  u = 0, 
\end{array}
\right.
\end{equation*}
where $\sigma$ is a nonnegative locally finite measure in $\RR^n$, absolutely continuous with respect 
to the $p$-capacity, and  
${\rm div}\, \mathcal{A}(x,\nabla u)$ is the $\mathcal{A}$-Laplace operator, under 
standard growth and monotonicity assumptions of order $p$ ($1<p<\infty$) on $\mathcal{A}(x, \xi)$ ($x, \xi \in \RR^n$); the model case 
$\mathcal{A}(x, \xi)=\xi | \xi |^{p-2}$  corresponds to the $p$-Laplace operator $\Delta_p$ on $\RR^n$. 

Our main results establish uniqueness of solutions to a similar problem,
\begin{equation*}
\left\{ \begin{array}{ll}
-{\rm div}\, \mathcal{A}(x,\nabla u) = \sigma u^q +\mu, \quad u\geq 0  \quad \text{in } \RR^n, \\
\displaystyle{\liminf_{|x|\rightarrow \infty}}\,  u = 0, 
\end{array}
\right.
\end{equation*}
 in the sub-natural growth case $0<q<p-1$, where 
$\mu, \sigma$ are nonnegative locally finite measures in 
$\RR^n$, absolutely continuous with respect 
to the $p$-capacity, and   $\mathcal{A}(x, \xi)$ satisfies an additional homogeneity condition, which holds in particular for 
the  $p$-Laplace operator. 
\end{abstract}

\keywords{Quasilinear equations, p-Laplace, entire solutions, uniqueness of solutions, sub-natural growth}

\maketitle

\section{Introduction}

We prove the uniqueness property for a class of \textit{reachable} solutions   to the equation 
\begin{equation}\label{p-Lapl} 
\left\{ \begin{array}{ll}
- \Delta_p u = \sigma, \quad u\geq 0  \quad \text{in } \RR^n, \\
\displaystyle{\liminf_{|x|\rightarrow \infty}}\,  u = 0, 
\end{array}
\right.
\end{equation}
where $\sigma\ge 0$ is a  \textit{locally finite} Borel measure in $\RR^n$ absolutely continuous with respect 
to the $p$-capacity, and  $\Delta_p u = {\rm div} (\nabla u | \nabla u |^{p-2})$ ($1<p<\infty$) is 
 the $p$-Laplace operator. 
 
 More general  $\mathcal{A}$-Laplace operators  
${\rm div}\, \mathcal{A}(x,\nabla u)$ in place of $\Delta_p$,  under 
standard growth and monotonicity assumptions of order $p$  on $\mathcal{A}(x, \xi)$ ($x, \xi \in \RR^n$), are treated as well (see Sec. \ref{A-superharm}). All solutions $u$ of  \eqref{p-Lapl} are understood  to be $\mathcal{A}$-superharmonic 
(or, equivalently, locally renormalized) solutions in $\RR^n$ (see  \cite{KKT} and Sec. \ref{Quasilinear} below). 

We often use bilateral global pointwise estimates of solutions to \eqref{p-Lapl} obtained by 
Kilpel\"ainen and Mal\'y  \cite{KM1, KM2} in terms of the Havin--Maz'ya--Wolff potentials (often called Wolff potentials) 
${\rm \bf W}_{1,\,p}\sigma$. Criteria of existence of solutions to \eqref{p-Lapl}, which ensure that ${\rm \bf W}_{1,\,p}\sigma\not\equiv \infty$, can be found in \cite{PV1} (see also Sec. \ref{Quasilinear} below).

In Sec. \ref{sub-natural}, we prove uniqueness of nontrivial \textit{reachable} solutions to the problem 
\begin{equation}\label{p-Lapl-mu} 
\left\{ \begin{array}{ll}
- \Delta_p u = \sigma u^q + \mu, \quad u\geq 0  \quad \text{in } \RR^n, \\
\displaystyle{\liminf_{|x|\rightarrow \infty}}\,  u = 0, 
\end{array}
\right.
\end{equation} in the sub-natural growth case $0<q<p-1$,
where $\mu, \sigma$ are nonnegative locally finite measures in $\RR^n$ absolutely continuous with respect 
to the $p$-capacity. We observe that such a uniqueness property generally fails in the case $q\ge p-1$.

When  we treat the uniqueness problem for  solutions  of equations of type \eqref{p-Lapl-mu} for $0<q<p-1$  with 
 more general  $\mathcal{A}$-Laplace operators   
${\rm div}\, \mathcal{A}(x,\nabla \cdot)$  in place of $\Delta_p$,  we impose the additional homogeneity condition 
$\mathcal{A}(x,\lambda \xi)=\lambda^{p-1} \mathcal{A}(x, \xi)$, for all $\xi \in\RR^n$ and $ \lambda>0$ (see Sec. \ref{sub-natural}). Our main tool 
 in the proof of uniqueness is provided by bilateral pointwise estimates for  
 all entire 
 solutions obtained recently in  \cite{Ver1}. 


We observe that in the case $p=2$ \textit{all} superharmonic solutions of  equations  \eqref{p-Lapl} and  \eqref{p-Lapl-mu}   are reachable, 
and hence unique. An analogue of this fact is  true  for more general equations with the  linear uniformly elliptic $\mathcal{A}$-Laplace operator   
${\rm div}\, (\mathcal{A}(x) \nabla u)$, with bounded measurable coefficients $\mathcal{A} \in L^\infty(\R^n)^{n\times n}$, in place of $\Delta$. In other words, all entire $\mathcal{A}$-superharmonic  solutions to such equations are unique.  For similar problems in domains $\Omega \subseteq \RR^n$ and linear operators with positive Green's function  satisfying some additional properties (in particular, in uniform domains) 
the uniqueness property  was obtained recently in \cite{Ver2}. 

The uniqueness of nontrivial \textit{bounded} (superharmonic) solutions for \eqref{p-Lapl-mu} in the case $p=2$ was proved earlier by Brezis and Kamin \cite{BK}. 
For  solutions $u\in C(\overline{\Omega})$ in 
bounded smooth domains $\Omega\subset \RR^n$ and $\mu, \sigma \in C(\overline{\Omega})$, 
along with some more general equations involving monotone increasing, concave nonlinearities on the right-hand side, the uniqueness 
property was originally established by Krasnoselskii \cite[Theorem 7.14]{Kra}.

As shown below, for $p\not=2$, \textit{all} $p$-superharmonic  solutions $u$ to  \eqref{p-Lapl} or \eqref{p-Lapl-mu} are reachable, 
and hence unique,  if, for instance, the condition $\displaystyle{\liminf_{|x|\rightarrow \infty}}\,  u = 0$ in 
\eqref{p-Lapl} or \eqref{p-Lapl-mu}, respectively, is replaced with  
$\displaystyle{\lim_{|x|\rightarrow \infty}}\,  u = 0$. See Sections \ref{Quasilinear} and   \ref{sub-natural},  where we discuss this and other conditions that  ensure that 
all solutions are reachable. 

Existence  and bilateral pointwise estimates for all  $\mathcal{A}$-superharmonic solutions to \eqref{p-Lapl-mu}  were  
obtained in \cite{Ver1}. (See also earlier results in \cite{CV2}  involving \textit{minimal}  solutions in the case $\mu=0$.) In particular, it is known  that the measure $\sigma$ is necessarily absolutely continuous with respect 
to the $p$-capacity provided there exists a nontrivial  $u \ge 0$ such that $- \Delta_p u \ge  \sigma u^q$ (\cite{CV2}, Lemma 3.6).  

We remark that  the proofs of 
the main existence results in  \cite{CV2, Ver1} for \eqref{p-Lapl-mu}  in the case $\mu=0$ used a version of the comparison principle (\cite{CV2}, Lemma 5.2) that contained some inaccuracies.  A corrected form 
of this comparison principle is provided in  
 Lemma \ref{CP-corrected}  below. The other parts of  
 \cite{CV2, Ver1} are unaffected by this correction.  
 
 With regards to the existence problem,  we prove additionally that  we can always construct a reachable solution to either  \eqref{p-Lapl} or
 \eqref{p-Lapl-mu}, whenever a solution to the corresponding equation exists 
 (see Theorem \ref{EandU} and Remark \ref{rem-last}  below).

\section{$\mathcal{A}$-superharmonic functions}\label{A-superharm}

Let $\Om\subseteq \RR^n$, $n\geq 2$, be an open set. By $\MM(\Om)$ we denote the  cone of nonnegative locally finite Borel measures in $\Om$, and by  $\MM_b(\Om)$ the subcone of finite measures 
in $\MM(\Om)$. For $\mu \in \MM(\Om)$, we set $\Vert \mu\Vert_{\MM(\Omega)}=\mu(\Omega)$ even if 
$\mu(\Omega)=+\infty$. The space of finite  signed  Borel measures in $\Om$ is denoted by $\M_b(\Om)$. By $ \Vert \mu\Vert_{\M_b(\Om)}$ we denote the total variation of $\mu \in \M_b(\Om)$.

Let   $\mathcal{A}\colon  \RR^n \times \RR^n\rightarrow\RR^n$ be a Carath\'eodory function in the sense that 
the map $x \rightarrow \mathcal{A}(x,\xi)$  is  measurable~ for~ all~ $\xi\in\RR^n,$ and 
the~ map ~ $\xi\rightarrow \mathcal{A}(x,\xi)$  ~is~ continuous~ for~ a.e. $x\in\RR^n$.
Throughout the paper, we assume that   there are constants $0<\alpha\leq\beta<\infty$ and $1<p<n$ such that for a.e. $x$ in $\RR^n$,
\begin{equation}\label{structure}
\begin{aligned}
& \mathcal{A}(x,\xi)\cdot\xi\geq \alpha|\xi|^p,\quad |\mathcal{A}(x,\xi)|\leq\beta|\xi|^{p-1},  \quad \forall\, \xi 
\in \RR^n, \\
& [\mathcal{A}(x,\xi_{1})-\mathcal{A}(x,\xi_{2})]\cdot(\xi_{1}-\xi_{2})>0, \quad \forall\, \xi_{1}, \xi_2 \in \RR^n, \, \, 
\xi_{1}\not = \xi_{2}.
\end{aligned}
\end{equation}

In  the uniqueness results of Sec. \ref{sub-natural}, we assume additionally the homogeneity condition 
\begin{equation}\label{homogeneity}
\mathcal{A}(x,\lambda \xi)=\lambda^{p-1} \mathcal{A}(x, \xi), \qquad \forall \, \xi 
\in \RR^n,  \,
\lambda>0.
\end{equation}
Such homogeneity conditions are often used in the literature (see \cite{HKM}, \cite{KM2}).

For an open set $\Om\subset\RR^n$, it is well known that every weak solution $u\in W^{1,\,p}_{{\rm loc}}(\Om)$  to the
equation
\begin{eqnarray}
\label{homo}
-\text{div}\mathcal{A}(x,\nabla u)=0 \qquad \textrm{in} \, \, \Om
\end{eqnarray}
has a continuous representative. Such continuous solutions are said to be
$\mathcal{A}$-$harmonic$ in $\Om$. If $u\in W_{{\rm loc}}^{1,\,p}(\Om)$ and
\begin{eqnarray*}
	\int_{\Om}\mathcal{A}(x,\nabla u)\cdot\nabla\varphi \, dx\geq 0,
\end{eqnarray*}
for all nonnegative $\varphi\in C^{\infty}_{0}(\Om)$, i.e., $-{\rm div}\mathcal{A}(x,\nabla u)
\geq 0$ in the distributional sense, then $u$ is called a {\it supersolution}
to (\ref{homo}) in $\Om$.\\
\indent A  function $u\colon\Om\rightarrow (-\infty, \infty]$ is called
$\mathcal{A}$-$superharmonic$ if $u$ is not identically infinite in each connected component
of $\Om$, $u$ is lower semicontinuous, and for all open sets  $D$ such that
${\overline D}\subset\Om$, and all functions $h\in C(\overline{D})$, $\mathcal{A}$-harmonic in $D$, it follows that
$h\leq u$ on $\partial D$ implies $h\leq u$ in $D$.

A typical example of $\mathcal{A}(x,\xi)$ is given by $\mathcal{A}(x,\xi)=|\xi|^{p-2}\xi$, which gives rise to the $p$-Laplacian 
$\Delta_p u= {\rm div}\, (|\nabla u|^{p-2}\nabla u)$. In this case, $\mathcal{A}$-superharmonic functions will be called $p$-superharmonic functions.

  We recall here the fundamental connection between supersolutions of (\ref{homo}) and
$\mathcal{A}$-superharmonic functions discussed in \cite{HKM}.
\begin{proposition}[\cite{HKM}] \label{pro2.1}{\rm (i)} If $u\in W_{{\rm loc}}^{1,\,p}(\Om)$
	is such that
	\begin{eqnarray*}
		-{\rm div}\mathcal{A}(x,\nabla u)\geq 0 \qquad \textrm{in} \, \, \Om,
	\end{eqnarray*}
	then there is an $\mathcal{A}$-superharmonic function $v$ such that $u=v$ a.e. Moreover,
	\begin{eqnarray}
	\label{liminf}
	v(x)={\rm ess}\liminf_{y\rightarrow x}v(y), \qquad  x\in\Om.
	\end{eqnarray}
	\indent {\rm (ii)} If v is $\mathcal{A}$-superharmonic, then (\ref{liminf}) holds. Moreover,
	if $v\in W^{1,\,p}_{{\rm loc}}(\Om)$, then
	\begin{eqnarray*}
		-{\rm div}\mathcal{A}(x, \nabla v)\geq 0  \qquad \textrm{in} \, \, \Om.
	\end{eqnarray*}
	\indent {\rm (iii)} If v is $\mathcal{A}$-superharmonic and locally bounded, then
	$v\in W^{1,\, p}_{{\rm loc}}(\Om)$,    and
	\begin{eqnarray*}
		-{\rm div}\mathcal{A}(x, \nabla v)\geq 0  \qquad \textrm{in} \, \, \Om.
	\end{eqnarray*}
\end{proposition}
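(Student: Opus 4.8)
All three parts are classical (this is, in essence, the foundational result of \cite{HKM}), and the plan is to derive them from three standard ingredients for the operator $-{\rm div}\,\mathcal{A}(x,\nabla\cdot)$: \textbf{(a)} the De Giorgi--Nash--Moser theory --- weak solutions of \eqref{homo} have locally H\"older continuous representatives, nonnegative supersolutions obey the weak Harnack inequality, and supersolutions are locally bounded below; consequently the ess-liminf regularization $\tilde u(x):={\rm ess}\liminf_{y\to x}u(y)$ of a supersolution $u$ satisfies $\tilde u=u$ a.e.\ and is lower semicontinuous; \textbf{(b)} the weak comparison principle in $W^{1,p}$ --- if $u$ is a supersolution and $h$ a subsolution of \eqref{homo} on an open set $D$ with $(h-u)_+\in W_0^{1,p}(D)$, then $h\le u$ a.e.\ in $D$ (test with $\varphi=(h-u)_+$ and invoke the strict monotonicity in \eqref{structure}, which forces $\nabla\varphi=0$ a.e., hence $\varphi\equiv0$); and \textbf{(c)} solvability, with interior Caccioppoli estimates, of the Dirichlet and obstacle problems for \eqref{homo} in $W^{1,p}$.

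For part (i), I would set $v:=\tilde u$; by (a), $v=u$ a.e., $v$ is lower semicontinuous, and \eqref{liminf} holds by the very definition of $v$. To see that $v$ is $\mathcal{A}$-superharmonic, fix $D$ with $\overline D\subset\Omega$ and $h\in C(\overline D)$, $\mathcal{A}$-harmonic in $D$, with $h\le v$ on $\partial D$. For $\varepsilon>0$ the function $(h-v-\varepsilon)_+$ lies in $W^{1,p}_{{\rm loc}}(D)$ (as $h,v\in W^{1,p}_{{\rm loc}}(D)$) and vanishes in a neighbourhood of $\partial D$, since $h-v$ is upper semicontinuous and $\le0$ on the compact set $\partial D$; hence $(h-v-\varepsilon)_+\in W_0^{1,p}(D)$. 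Applying (b) to the supersolution $v+\varepsilon$ and the subsolution $h$ gives $h\le v+\varepsilon$ a.e.\ in $D$; letting $\varepsilon\downarrow0$ yields $h\le v$ a.e., and since $h$ is continuous and $v(x)={\rm ess}\liminf_{y\to x}v(y)$ for all $x$, this upgrades to $h\le v$ everywhere in $D$.

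For part (ii), the identity \eqref{liminf} holds for \emph{any} lower semicontinuous $v$: one has $v(x)\le{\rm ess}\liminf_{y\to x}v(y)$ because the essential liminf dominates the ordinary liminf, which is $\ge v(x)$ by lower semicontinuity, while the reverse inequality holds because, given any $b>v(x)$, lower semicontinuity produces a ball on which $v<b$ pointwise, so the essential infimum of $v$ over small punctured balls at $x$ is $<b$. For the supersolution property under the hypothesis $v\in W^{1,p}_{{\rm loc}}(\Omega)$, I would reduce to the locally bounded case by truncation: $v_k:=\min(v,k)$ is $\mathcal{A}$-superharmonic (a minimum of $\mathcal{A}$-superharmonic functions) and locally bounded, hence a supersolution by part (iii), so $\int_\Omega\mathcal{A}(x,\nabla v_k)\cdot\nabla\varphi\,dx\ge0$ for all $0\le\varphi\in C_0^\infty(\Omega)$; as $k\to\infty$ one has $\nabla v_k=\nabla v\cdot\mathbf{1}_{\{v<k\}}\to\nabla v$ in $L^p_{{\rm loc}}$, and the bound $|\mathcal{A}(x,\xi)|\le\beta|\xi|^{p-1}$ together with continuity of $\mathcal{A}(x,\cdot)$ lets one pass to the limit and conclude $-{\rm div}\,\mathcal{A}(x,\nabla v)\ge0$.

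Part (iii) is the analytic core. Assume first that $v$ is $\mathcal{A}$-superharmonic and \emph{bounded} on $\Omega$ (the locally bounded case reduces to this, since membership in $W^{1,p}_{{\rm loc}}$ and the supersolution property are local and, near any point, $v$ agrees with a bounded $\mathcal{A}$-superharmonic function). Fix $B$ with $\overline B\subset\Omega$. Choose Lipschitz functions $\psi_j\uparrow v$ on $\overline B$ --- e.g.\ the inf-convolutions $\psi_j(x)=\inf_{y\in\overline B}[v(y)+j\,|x-y|]$, which increase to $v$ because $v$ is lower semicontinuous and bounded below on $\overline B$ --- and let $w_j$ solve the obstacle problem in $B$ with obstacle and boundary data $\psi_j$. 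Each $w_j$ is a bounded supersolution with $w_j\ge\psi_j$; moreover $w_j\le v$, since $v$ restricted to $B$ is an $\mathcal{A}$-superharmonic function lying above both the obstacle and its boundary values on $\partial B$, and the minimality characterization of the obstacle solution gives $w_j\le v$; hence $\psi_j\le w_j\le v$ and $w_j\uparrow v$ pointwise. The interior Caccioppoli estimate for supersolutions --- which depends only on $\|w_j\|_{L^\infty(B)}\le\|v\|_{L^\infty(\overline B)}$ and the geometry, not on $\|\nabla\psi_j\|_\infty$ --- bounds $\int_{B'}|\nabla w_j|^p\,dx$ uniformly in $j$ for each $B'$ with $\overline{B'}\subset B$, so a subsequence converges weakly in $W^{1,p}(B')$, and the monotone a.e.\ limit $v$ is the weak limit; thus $v\in W^{1,p}_{{\rm loc}}(\Omega)$. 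Passing to the limit in $\int_{B'}\mathcal{A}(x,\nabla w_j)\cdot\nabla\varphi\,dx\ge0$ then yields that $v$ is a supersolution, which with part (ii) completes (iii). \textbf{Expected main obstacle.} The comparison principle (b) and the semicontinuity facts are routine; the real work is in part (iii), in two places: producing the monotone Lipschitz approximants $w_j\uparrow v$ with \emph{uniform} Caccioppoli control (this uses solvability of the obstacle problem for strictly monotone $\mathcal{A}$ and the identification $w_j\le v$ via the comparison property), and --- because $\mathcal{A}$ is nonlinear --- upgrading the weak convergence $\nabla w_j\rightharpoonup\nabla v$ to almost-everywhere convergence of the gradients in order to pass to the limit in the term $\mathcal{A}(x,\nabla w_j)$; this last step is the standard but delicate Minty/Boccardo--Murat-type argument exploiting the monotonicity in \eqref{structure}.
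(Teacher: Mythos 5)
The paper offers no proof of Proposition \ref{pro2.1}; it is quoted verbatim from \cite{HKM}, so your sketch can only be measured against the standard arguments there, which you follow in outline: ess-liminf regularization plus the $W^{1,p}_0$ comparison principle for (i), truncation for (ii), and obstacle-problem approximation with inf-convolutions, uniform Caccioppoli bounds and a Minty/Boccardo--Murat gradient-convergence step for (iii). However, part (ii) contains a genuine error. You claim that \eqref{liminf} holds for \emph{every} lower semicontinuous function, and justify the inequality $v(x)\ge \mathrm{ess}\liminf_{y\to x}v(y)$ by saying that for $b>v(x)$ lower semicontinuity produces a ball on which $v<b$ pointwise. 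That is what \emph{upper} semicontinuity would give; lower semicontinuity only produces balls on which $v>b'$ for $b'<v(x)$, and the claimed identity is simply false for general lsc functions: $v=-\chi_{\{x_0\}}$ is lsc with $v(x_0)=-1$ while $\mathrm{ess}\liminf_{y\to x_0}v(y)=0$. The inequality $v(x)\ge \mathrm{ess}\liminf_{y\to x}v(y)$ is precisely the nontrivial content of (ii) for an $\mathcal{A}$-superharmonic $v$, and it genuinely uses the equation --- in \cite{HKM} it is obtained from the supersolution property of the truncations $\min\{v,k\}$ together with the weak Harnack inequality (equivalently, from the fact that two $\mathcal{A}$-superharmonic functions coinciding a.e.\ coincide everywhere, so that $v$ agrees pointwise with its own ess-liminf regularization constructed in (i)). As written, this step is unproved; note that (i) is unaffected, since there $v$ is \emph{defined} as the regularization and \eqref{liminf} holds by construction once $v=u$ a.e.

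A secondary, repairable point in (iii): the inequality $w_j\le v$ cannot be deduced from a ``minimality characterization of the obstacle solution,'' because that characterization is minimality among \emph{supersolutions} lying above the obstacle, and $v$ is not yet known to be a supersolution --- that is exactly what (iii) is proving, so the argument as stated is circular. The standard fix (and the route in \cite{HKM}) is to compare $v$ with $w_j$ on the open set $\{w_j>\psi_j\}$, where $w_j$ is continuous and $\mathcal{A}$-harmonic and has boundary values $\psi_j\le v$, invoking directly the comparison property in the definition of $\mathcal{A}$-superharmonicity; with that substitution, and with the part (ii) argument replaced by the weak-Harnack/a.e.-identity argument above, your overall plan is the classical one.
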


Note that if $u$ is  $\mathcal{A}$-superharmonic, then the gradient of $u$ may not exist in the sense of distributions in the case $1<p\leq 2-1/n$.
On the other hand, if  $u$ is an $\mathcal{A}$-superharmonic function, then 
  its truncation $u_k=\min\{u,k\}$ is $\mathcal{A}$-superharmonic as well, for any
  $k>0$. Moreover, by Proposition \ref{pro2.1}(iii) we have  $u_k\in W^{1,\,p}_{{\rm loc}}(\Om)$. Using this we define the very weak gradient
\begin{eqnarray*}
	Du \defeq \lim_{k\rightarrow\infty} \, \nabla  [ \, \min\{u,k\}] \qquad {\rm a.e.}  \,\, \textrm{in} \, \, \Om.
\end{eqnarray*}
If either $u\in L^{\infty}(\Om)$ or $u\in W^{1,\,1}_{{\rm loc}}(\Om)$, then $Du$
coincides with the regular distributional gradient of $u$. In general we have the following
gradient estimates \cite{KM1} (see also \cite{HKM}).
\begin{proposition}[\cite{KM1}]\label{gradient} Suppose u is $\mathcal{A}$-superharmonic in $\Om$ and
	$1\leq q< \frac{n}{n-1}$. Then both $|Du|^{p-1}$ and $\mathcal{A}(\cdot,Du)$
	belong to $L^{q}_{{\rm loc}}(\Om)$. Moreover, if  $p>2-\frac{1}{n}$, then $Du$ coincides
with	the distributional gradient of u.
\end{proposition}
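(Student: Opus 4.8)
The plan is to recall the argument of Kilpel\"ainen--Mal\'y \cite{KM1} (see also \cite{HKM}): reduce to bounded functions by truncation, establish Boccardo--Gallou\"et type weak-Lebesgue (Marcinkiewicz) bounds for $u$ and for the very weak gradient $Du$ that are uniform in the truncation level, and then pass to the limit. Fix a ball $B_{2r}=B(x_0,2r)$ with $\overline{B_{2r}}\subset\Om$; it suffices to prove the assertions on $B_r$. Since $u$ is lower semicontinuous it is bounded below on $\overline{B_{2r}}$, so after subtracting a constant --- which changes neither $Du$ nor the distributional gradient, should the latter exist --- we may assume $u\ge0$ on $B_{2r}$. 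For $k>0$ put $u_k=\min\{u,k\}$; by Proposition~\ref{pro2.1}(iii), $u_k\in W^{1,\,p}_{{\rm loc}}(\Om)$ and $-{\rm div}\,\mathcal{A}(\cdot,\nabla u_k)=\mu_k$ for some $\mu_k\in\MM(\Om)$, while $\nabla u_k=Du$ a.e.\ on $\{u<k\}$, so $\nabla u_k\to Du$ a.e.\ in $B_{2r}$. It is a standard fact of the theory \cite{HKM} that the $\mu_k$ converge weakly to the locally finite Riesz measure $\mu$ of $u$; in particular $M_0\defeq\sup_k\mu_k(B_{3r/2})<\infty$.

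Fix a cutoff $\eta\in C_0^\infty(B_{3r/2})$ with $\eta\equiv1$ on $B_r$, $0\le\eta\le1$, $|\nabla\eta|\le C/r$. For $T>0$ and $j\in\NN$ set $w_j=\min\{u_k,(j+1)T\}-\min\{u_k,jT\}$, so that $0\le w_j\le T$ and $\nabla w_j=\mathbf{1}_{\{jT<u_k<(j+1)T\}}\nabla u_k$. Testing $-{\rm div}\,\mathcal{A}(\cdot,\nabla u_k)=\mu_k$ against the admissible functions $w_j\,\eta^p$ and using coercivity and the growth bound in \eqref{structure}, Young's inequality, $\mu_k(B_{3r/2})\le M_0$, and summation over $j$, one obtains the standard energy bounds of Boccardo--Gallou\"et type; combined with the Chebyshev and Sobolev inequalities on $B_{3r/2}$ these give, for all $\lambda,t>0$,
\begin{equation*}
\bigl|\{x\in B_r:\, u(x)>\lambda\}\bigr|\le C\,\lambda^{-\frac{n(p-1)}{n-p}},\qquad
\bigl|\{x\in B_r:\, |Du(x)|>t\}\bigr|\le C\,t^{-\frac{n(p-1)}{n-1}},
\end{equation*}
with $C=C(n,p,\alpha,\beta,r,M_0)$; the second estimate follows from the first by splitting $\{|Du|>t\}\cap B_r$ according to whether $u<\lambda$ or $u\ge\lambda$, with $\lambda$ chosen as a suitable power of $t$. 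Hence $|Du|\in L^{\frac{n(p-1)}{n-1},\infty}(B_r)$, so $|Du|^{p-1}\in L^{\frac{n}{n-1},\infty}(B_r)\subset L^q(B_r)$ for every $q<\frac{n}{n-1}$; and since $|\mathcal{A}(x,Du)|\le\beta\,|Du|^{p-1}$ by \eqref{structure}, also $\mathcal{A}(\cdot,Du)\in L^q_{{\rm loc}}(\Om)$.

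Suppose now $p>2-\tfrac1n$. Then $\tfrac{n(p-1)}{n-1}>1$, and also $\tfrac{n(p-1)}{n-p}>1$, so by the above $u\in L^1_{{\rm loc}}(\Om)$ and $|Du|\in L^s(B_r)$ for some $s>1$. Applying the gradient estimate to each $u_k$ with constants independent of $k$ shows that $\{\nabla u_k\}$ is bounded in $L^s(B_r)$, hence equi-integrable; together with $\nabla u_k\to Du$ a.e.\ this yields $\nabla u_k\to Du$ in $L^1(B_r)$, while $u_k\uparrow u$ in $L^1(B_r)$ by monotone convergence. Passing to the limit in $\int_{B_r}u_k\,\partial_i\psi\,dx=-\int_{B_r}\partial_i u_k\,\psi\,dx$ for $\psi\in C_0^\infty(B_r)$, $i=1,\dots,n$, identifies $Du$ with the distributional gradient of $u$ on $B_r$, so $u\in W^{1,1}_{{\rm loc}}(\Om)$.

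The heart of the argument is the Marcinkiewicz step: obtaining the \emph{sharp} exponents $\tfrac{n(p-1)}{n-p}$ for $u$ and $\tfrac{n(p-1)}{n-1}$ for $|Du|$ requires the precise choice of test functions (truncations of $u_k$ between geometrically spaced levels), a careful summation of the resulting energy inequalities over the dyadic bands, and the Sobolev inequality, with the uniform bound $M_0=\sup_k\mu_k(B_{3r/2})<\infty$ ensuring that all constants are independent of the truncation level $k$; establishing this uniform mass bound --- equivalently, the local finiteness of the limiting Riesz measure --- is the delicate point, which is where \cite{HKM} is invoked. Finally, the restriction $p>2-\tfrac1n$ in the last assertion is genuine: for $1<p\le2-\tfrac1n$ the very weak gradient of a $p$-superharmonic function need not be a distributional gradient, as already the fundamental solution of $\Delta_p$ with a point mass shows.
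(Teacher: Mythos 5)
The paper itself gives no proof of this proposition---it is quoted from \cite{KM1}---so your reconstruction has to stand on its own, and as written it has a circular step at exactly the point you yourself call delicate. Your scheme needs the uniform local mass bound $M_0=\sup_k\mu_k(B_{3r/2})<\infty$ for the Riesz measures $\mu_k$ of the truncations $\min\{u,k\}$, and you justify it by asserting that the $\mu_k$ converge weakly to the locally finite Riesz measure $\mu[u]$ of $u$, citing \cite{HKM}. But for a general (unbounded) $\mathcal{A}$-superharmonic $u$, the existence and local finiteness of $\mu[u]$ is obtained, in \cite{HKM} and in Sec.~2 of this paper, precisely from the conclusion of the proposition: one needs $\mathcal{A}(\cdot,Du)\in L^1_{\rm loc}$ (via the $L^q_{\rm loc}$ bound plus dominated convergence) even to define the distribution $-{\rm div}\,\mathcal{A}(x,Du)$ before the Riesz representation theorem is applied, and the weak convergence $\mu_k\to\mu[u]$ is established in the same way. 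Nothing else in your argument controls the mass that $\mu_k$ may pick up on the set where $u$ is large, which is exactly where the limit measure can concentrate (a Dirac mass sits at the pole of the fundamental solution, where $u=\infty$); the naive bound $\mu_k(B_r)\le Ck^{p-1}r^{n-p}$ coming from the Caccioppoli inequality for the bounded supersolution $\min\{u,k\}$ is not uniform in $k$. So there is a genuine gap.

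The gap can be closed, but it requires an argument of its own, and once that argument is available the Boccardo--Gallou\"et machinery becomes unnecessary. Testing the supersolution inequality for $u_k=\min\{u,k\}$ with $\eta^p\,\lambda\,(1+u_k/\lambda)^{-\varepsilon}$ ($\eta$ a cutoff, $\varepsilon>0$ small, $\lambda>0$) yields the weighted Caccioppoli estimate $\int\eta^p(1+u_k/\lambda)^{-1-\varepsilon}|\nabla u_k|^p\,dx\le C(\varepsilon)\,\lambda^p\int|\nabla\eta|^p(1+u/\lambda)^{p-1-\varepsilon}\,dx$, with constants independent of $k$. Choosing $\lambda$ comparable to an $L^s$-average of $u$ on $B_{2r}$, which is finite and under control by the weak Harnack inequality \cite[Theorem 7.46]{HKM} (that, rather than the Chebyshev--Sobolev step run through the measures, is the non-circular source of the summability $u\in L^s_{\rm loc}$ for $s<n(p-1)/(n-p)$), you can then either (a) obtain the missing uniform bound $\mu_k(B_r)\le\int\mathcal{A}(x,\nabla u_k)\cdot\nabla(\eta^p)\,dx\le C\lambda^{p-1}r^{n-p}$ by H\"older's inequality, after which your Marcinkiewicz scheme runs as stated; or (b) bypass measures entirely and deduce $|Du|\in L^q_{\rm loc}$ for $q<n(p-1)/(n-1)$ directly by H\"older against the weight $(1+u_k/\lambda)^{(1+\varepsilon)q/p}$---this is the original route of \cite{KM1, HKM}. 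Your final step (for $p>2-\tfrac1n$, passing to the limit in $\int u_k\,\partial_i\psi\,dx=-\int\partial_iu_k\,\psi\,dx$ to identify $Du$ with the distributional gradient) is fine once the integrability is in place.
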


%
 Note that  by Proposition \ref{gradient} and the
dominated convergence theorem, we have 
\begin{align*}
	-{\rm div}\mathcal{A}(x, \nabla u)(\varphi)&:=\int_{\Om}\mathcal
	{A}(x, D u)\cdot \nabla \varphi \, dx\\
	& = \lim_{k\rightarrow\infty}\int_{\Om}\mathcal
	{A}(x, \nabla\min\{u,k\})\cdot \nabla \varphi \, dx \geq 0,
\end{align*}
whenever $\varphi\in C^{\infty}_{0}(\Om)$, $\varphi\geq 0$, and $u$ is $\mathcal{A}$-superharmonic in $\Om$.
It follows  from Riesz's representation theorem (see \cite[Theorem 21.2]{HKM})
that there exists a unique measure $\mu[u]\in \MM(\Om)$ called the Riesz measure of $u$ such that
\begin{eqnarray*}
	-{\rm div}\mathcal{A}(x, \nabla u)=\mu[u] \qquad \textrm{in} \, \, \Om.
\end{eqnarray*}

\section{Quasilinear equations with locally finite measure data in the entire space}\label{Quasilinear}

In this section, we investigate the problems of  existence and uniqueness  of $\mathcal{A}$-superharmonic solutions in the entire space $\RR^n$ to the equation
\begin{equation}\label{Basic-PDE}
\left\{ \begin{array}{ll}
-{\rm div}\, \mathcal{A}(x,\nabla u) = \sigma, \quad u\geq 0  \quad \text{in } \RR^n, \\
\displaystyle{\liminf_{|x|\rightarrow \infty}}\,  u = 0, 
\end{array}
\right.
\end{equation}
with measures $\sigma \in \MM(\RR^n)$ (not necessarily finite).

There has been a lot of work addressing  the   existence and uniqueness problem for quasilinear equations
of the form 
\begin{equation}\label{Om-equation}
\left\{ \begin{array}{ll}
-{\rm div}\, \mathcal{A}(x,\nabla u) = \sigma   \quad \text{in } \Omega, \\
  u = 0 \quad \text{on } \partial\Om, 
\end{array}
\right.
\end{equation}
in a \textit{bounded} domain $\Om\subset \RR^n$, 
where $\sigma\in L^1(\Om)$, or, more generally, $\sigma\in \M_b(\Om)$; see, e.g., \cite{BB6, BG1, BG2, BGO, Da, DMM, DMOP, MP, KX}. For arbitrary  domains, including $\RR^n$, we refer to the papers \cite{BB6} (for $L^1$ data) and  \cite{MP} (for data in  $\M_b(\Om)$).  In these papers one can find the notions of {\it entropy solutions} (see \cite{BB6, BGO, KX}), {\it SOLA} (solutions obtained as limits of approximations) for $L^1$ data (see \cite{Da}), {\it reachable solutions} (see \cite{DMM}), and {\it renormalized solutions} (see \cite{DMOP}).

The current state of the art on the uniqueness problem for \eqref{Om-equation} 
is that most results require that $\sigma<<{\rm cap}_p$, i.e., $\sigma$ is absolutely continuous with respect to the $p$-capacity in the sense that $\sigma(K)=0$ for any compact set  $K\subset \Omega$ such that ${\rm cap}_p(K)=0$.     The $p$-capacity ${\rm cap}_p(\cdot)$ is a natural capacity associated with the $p$-Laplacian  defined  by
$${\rm cap}_p(K)\defeq \inf\left\{\int_{\Omega} |\nabla h|^p dx\colon \, \,  h\in C_0^\infty(\Omega), \, \, h\geq 1 \text{ on } K \right\},$$
for any compact set  $K\subset \Omega$.

For later use, we now recall the following  equivalent  definitions of a (global) renormalized  solution to equation \eqref{Om-equation} (see \cite{DMOP}). For our purposes, we shall restrict ourselves to the case $\sigma\in \MM_b(\Om)$ and nonnegative solutions.    
Recall that we may use the decomposition   $\sigma=\sigma_0+\sigma_s$,  where both $\sigma_0$ and $\sigma_s$ are nonnegative measures such that 
$\sigma_0<< {\rm cap}_0$, and $\sigma_s$ is concentrated on a set of zero $p$-capacity.

\begin{definition}\label{rns1}
		Let $\sigma\in \MM_b(\Om)$, where  $\Om\subset\RR^n$ is  a bounded open set. Then $u\geq 0$ is said to be a renormalized
		solution of \eqref{Om-equation}
		if the following conditions hold:\\
		\noindent {\rm (a)} The function $u$ is measurable and finite almost everywhere, and $T_{k}(u)$ belongs
		to $W_{0}^{1,\,p}(\Om)$ for every $k>0$, where $T_{k}(s)\defeq\min
		\{k,s\}$, $s\ge 0$.\\
		\noindent {\rm (b)} The gradient $Du$ of $u$ satisfies $\m{Du}^{p-1}\in L^{q}(\Om)$ for all $q<\frac{n}{n-1}$.\\
		\noindent {\rm (c)} For any $h\in W^{1,\infty}(\RR)$ with compact support, and any $\varphi\in W^{1,p}(\Om)\cap L^\infty(\Om)$ such that 
		$h(u) \, \varphi \in W^{1,p}_0(\Om)$,
		$$\int_{\Om}\mathcal{A}(x, D u)\cdot \nabla (h(u) \, \varphi) \, dx=\int_{\Om} h(u) \,  \varphi \, d\sigma_0,$$
		and for any $\varphi\in C^{0}_{\rm b}(\Om)$ (the space of bounded and continuous functions in $\Om$),
		$$\lim_{m\rightarrow\infty} \frac{1}{m}\int_{\{ m\leq u\leq 2m\}} 
		\mathcal{A}(x, D u)\cdot D u \,\varphi \, dx =\int_{\Om} \varphi \, d\sigma_s. $$ 
		\end{definition}
	
\begin{definition}\label{hprimesupp}
Let $\sigma\in \MM_b(\Om)$, where $\Om\subset\RR^n$ is a bounded open set.  Then  $u\geq 0$ is said to be a renormalized
solution of \eqref{Om-equation}
if $u$ satisfies {\rm (a)} and {\rm (b)} in Definition \ref{rns1}, and if the following condition holds:\\
\noindent {\rm (c)} For any $h\in W^{1,\infty}(\RR)$ with such that $h'$ has compact support, and any $\varphi\in W^{1,r}(\Om)\cap L^\infty(\Om)$, $r>n$, such that 
$h(u)\varphi \in W^{1,p}_0(\Om)$,
$$\int_{\Om}\mathcal{A}(x, D u)\cdot \nabla (h(u) \, \varphi) \, dx=\int_{\Om} h(u) \, \varphi \, d\sigma_0 +  h(+\infty)\int_{\Om} \varphi \, d \sigma_s.$$
Here $h(+\infty)\defeq\lim_{s\rightarrow +\infty} h(s)$.
\end{definition}	
	
\begin{definition}\label{defrs} 
		Let $\sigma\in \MM_b(\Om)$, where $\Om\subset\RR^n$ is a bounded open set. Then  $u\geq 0$ is said to be a renormalized
		solution of \eqref{Om-equation}
		if $u$ satisfies {\rm (a)} and {\rm (b)} in Definition \ref{rns1}, and if the following conditions hold:\\
		\noindent {\rm (c)} For every $k>0$, there exists $\lambda_k \in \MM_b(\Om)$ concentrated on the set $\{u=k\}$ such that 
		 $\lambda_k<<{\rm cap}_p$, and 
		$\lambda_{k}\rightarrow \sigma_{s}$ in the narrow topology
		of measures in $\Om$ as $k\rightarrow\infty$, i.e.,
		 $$\lim_{k\rightarrow\infty} \int_{\Om} \varphi \, d\lambda_k= \int_{\Om} \varphi \, d\sigma_s, \quad \forall \varphi\in C_{\rm b}^0(\Om).$$
		 
		\noindent {\rm (d)} For every $k>0$, 
		\begin{equation*}
		\int_{\{u<k\}}\mathcal{A}(x, Du)\cdot\nabla\varphi \, dx= \int_{\{u<k\}} \varphi
		\, d \sigma_{0} +  \int_{\Om}\varphi \, d \lambda_{k} 
		\end{equation*}
		for all $\varphi \in {\rm W}_{0}^{1,\,p}(\Om)\cap L^{\infty}(\Om)$.
\end{definition}

We shall also need the notion of a \textit{local} renormalized (nonnegative) solution on a general open set $\Om\subseteq\RR^n$ (not necessarily bounded) associated with a measure $\sigma \in \MM(\Om)$ 
(not necessarily finite). We recall the following equivalent definitions  (see \cite{BV}), adapted to the case of nonnegative solutions.
\begin{definition}\label{rns1-local}
	Let $\sigma \in \MM(\Om)$, where   $\Om\subseteq\RR^n$ is an open set. Then a nonnegative function $u$ is said to be a local renormalized
	solution of the equation $-{\rm div} \mathcal{A}(x, \nabla u)=\sigma$,
	if the following conditions hold:\\
	\noindent {\rm (a)} The function $u$ is measurable and finite almost everywhere, and $T_{k}(u)$ belongs
	to $W_{\rm loc}^{1,\,p}(\Om)$, for every $k>0$, where $T_{k}(s)\defeq \min
	\{k,s\}$, $s \ge 0$.\\
	\noindent {\rm (b)} The gradient $Du$ of $u$ satisfies $\m{Du}^{p-1}\in L^{q}_{\rm loc}(\Om)$ for all $0<q<\frac{n}{n-1}$, and 
	$u^{p-1} \in L^s_{\rm loc}(\Om)$ for all $0<s<\frac{n}{n-p}$.\\
	\noindent {\rm (c)} For any $h\in W^{1,\infty}(\RR)$ with compact support, and any $\varphi\in W^{1,p}(\Om)\cap L^\infty(\Om)$ with compact support in $\Om$ such that 
	$h(u) \, \varphi \in W^{1,p}(\Om)$,
	$$\int_{\Om}\mathcal{A}(x, D u)\cdot \nabla (h(u) \, \varphi) \, dx=\int_{\Om} h(u) \, \varphi \, d\sigma_0,$$
	and for any $\varphi\in C_{\rm b}^{0}(\Om)$ with compact support in $\Om$,
	$$\lim_{m\rightarrow\infty} \frac{1}{m}\int_{\{ m\leq u\leq 2m\}} 
	\mathcal{A}(x, D u)\cdot Du 
	\, \varphi \, dx =\int_{\Om} \varphi \, d\sigma_s. $$ 
\end{definition}

\begin{definition}
		Let $\sigma \in \MM(\Om)$, where   $\Om\subseteq\RR^n$ is an open set. Then a nonnegative function $u$ is said to be a local renormalized
	solution of the equation $-{\rm div} \mathcal{A}(x, \nabla u)=\sigma$,
		if $u$ satisfies {\rm (a)} and {\rm (b)} in Definition \ref{rns1-local}, and if the following conditions hold:\\
	\noindent {\rm (c)} For every $k>0$, there exists a nonnegative measure $\lambda_k<<{\rm cap}_p$, concentrated on the sets $\{u=k\}$,  such that
	$\lambda_{k}\rightarrow \sigma_{s}$ weakly  as measures in $\Om$ as $k\rightarrow\infty$, i.e.,
	$$\lim_{k\rightarrow\infty} \int_{\Om} \varphi \, d\lambda_k= \int_{\Om} \varphi \, d\sigma_s,$$
	for all $\varphi\in C^0_{\rm b}(\Om)$ with compact support in $\Om$.\\

	\noindent {\rm (d)} For every $k>0$, 
	\begin{equation*}
	\int_{\{u<k\}} \mathcal{A}(x, Du)\cdot\nabla\varphi \, dx= \int_{\{u<k\}} \varphi \, 
	d\sigma_{0} +  \int_{\Om} \varphi \, d\lambda_{k} 
	\end{equation*}
	for all $\varphi$ in ${\rm W}_{0}^{1,\,p}(\Om)\cap L^{\infty}(\Om)$ with compact support in $\Om$.
\end{definition}

We now discuss solutions of  \eqref{Basic-PDE} 
for general measures $\sigma \in \MM(\RR^n)$. It is known that  a necessary and sufficient condition for  \eqref{Basic-PDE} to admit an $\mathcal{A}$-superharmonic  solution is the finiteness condition
\begin{equation}\label{finiteness}
\int_{1}^{\infty} \left(\frac{\sigma(B(0,\rho))}{\rho^{n-p}} \right)^{\frac{1}{p-1}}\frac{d\rho}{\rho}<+\infty;
\end{equation} 
(see, e.g., \cite{PV1, PV2}). Thus, it is possible to solve  \eqref{Basic-PDE} for a wide and optimal class of measures $\sigma$ satisfying \eqref{finiteness} 
that are not necessarily finite.

We mention that \eqref{finiteness}  is equivalent to the condition ${\bf W}_{1,p}\sigma(x)<+\infty$ for some $x\in\RR^n$ (or equivalently
quasi-everywhere in $\RR^n$ with respect to the $p$-capacity),
where 
$${\bf W}_{1,p}\sigma(x):=\int_0^\infty \left(\frac{\sigma(B(x,\rho))}{\rho^{n-p}} \right)^{\frac{1}{p-1}}\frac{d\rho}{\rho}$$
is the Havin--Maz'ya--Wolff potential of $\sigma$ (often called the Wolff potential); see \cite{HW, Maz}. 

By the fundamental result of Kilpel\"ainen and Mal\'y  \cite{KM1, KM2}, any $\mathcal{A}$-superharmonic solution $u$ to equation \eqref{Basic-PDE} satisfies the following global pointwise estimates, 
\begin{equation}\label{Wolffbound}
\frac{1}{K} \, {\rm \bf W}_{1,\,p}\sigma(x)\leq u(x)\leq K \, {\rm \bf W}_{1,\,p}\sigma(x), \qquad \forall x\in\RR^n,
\end{equation}
 where $K>0$ is a constant depending only on $n, p$ and the structural constants
$\alpha$ and $\beta$ in \eqref{structure}.

Our main goal here is to introduce a new notion of a solution to \eqref{Basic-PDE} so that existence is obtained under the natural growth condition 
\eqref{finiteness} for $\sigma$, and uniqueness is guaranteed as long as $\sigma<<{\rm cap}_p$ (see Definition \ref{reach} below).

We begin with the following result on the existence of a minimal solution to  \eqref{Basic-PDE} in  case the measure $\sigma$ is continuous with respect to the $p$-capacity.
\begin{theorem}\label{miniexist}
	Let  $\sigma \in \MM(\RR^n)$, where $\sigma<< {\rm cap}_p$. Suppose that 
	\eqref{finiteness} holds.
	Then there exists a  minimal $\mathcal{A}$-superharmonic solution to equation \eqref{Basic-PDE}.
\end{theorem}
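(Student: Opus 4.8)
The plan is a standard exhaustion of $\RR^n$ by balls, with everything controlled by the Wolff potential ${\bf W}_{1,p}\sigma$, which is finite quasi-everywhere by \eqref{finiteness}.

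First I would fix $B_j\defeq B(0,j)$ and $\sigma_j\defeq\sigma|_{B_j}$, a finite compactly supported measure with $\sigma_j\ll{\rm cap}_p$, and let $u_j\ge 0$ be the (unique, since $\sigma_j\ll{\rm cap}_p$) $\mathcal{A}$-superharmonic solution of the Dirichlet problem \eqref{Om-equation} on $B_j$ with datum $\sigma_j$ and zero boundary values; existence here is classical for finite diffuse data on a ball (see \cite{BG1, KM1, DMOP} and the renormalized framework recalled above). Since $\sigma_{j+1}|_{B_j}=\sigma_j$ and $u_{j+1}\ge 0 = u_j$ on $\partial B_j$, the comparison principle for diffuse data on the bounded domain $B_j$ (see, e.g., \cite{KM2, DMOP}) gives $u_j\le u_{j+1}$ in $B_j$, so $(u_j(x))_{j>|x|}$ is nondecreasing and I would define $u(x)\defeq\lim_{j\to\infty}u_j(x)\in[0,+\infty]$.

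Next I would establish the uniform bound $u_j\le K\,{\bf W}_{1,p}\sigma$ in $B_j$. For this, since $\sigma_j$ trivially satisfies \eqref{finiteness}, by \cite{PV1, PV2} there is an $\mathcal{A}$-superharmonic solution $s_j$ of \eqref{Basic-PDE} with measure $\sigma_j$, and \eqref{Wolffbound} gives $s_j\le K\,{\bf W}_{1,p}\sigma_j\le K\,{\bf W}_{1,p}\sigma$ on $\RR^n$; comparing $u_j$ with $s_j$ on the bounded domain $B_j$ (equal interior data, and $u_j=0\le s_j$ on $\partial B_j$) yields the claim. Since ${\bf W}_{1,p}\sigma<+\infty$ quasi-everywhere by \eqref{finiteness}, $u$ is finite almost everywhere, hence $u\not\equiv\infty$. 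Restricting to an arbitrary ball $B_\rho$: for $j>\rho$ the $u_j$ are $\mathcal{A}$-superharmonic there, nondecreasing, with Riesz measures $\sigma_j\le\sigma$ uniformly bounded, so $u$ is $\mathcal{A}$-superharmonic in $B_\rho$ (\cite{HKM}), and by the convergence/stability theory for $\mathcal{A}$-superharmonic (equivalently, renormalized) solutions along such sequences (\cite{HKM, KM1, DMOP}) one has $Du_j\to Du$ a.e.\ in $B_\rho$ and $\mu[u_j]\to\mu[u]$ weakly as measures in $B_\rho$. As $\mu[u_j]=\sigma_j=\sigma$ on $B_\rho$ for $j>\rho$, this gives $\mu[u]=\sigma$ in $B_\rho$, hence $-{\rm div}\,\mathcal{A}(x,\nabla u)=\sigma$ in $\RR^n$. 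The condition at infinity then follows from $0\le u\le K\,{\bf W}_{1,p}\sigma$ together with the fact that \eqref{finiteness} forces $\liminf_{|x|\to\infty}{\bf W}_{1,p}\sigma(x)=0$ (provable by averaging the part of the Wolff potential over radii below $|x|/2$ on annuli $\{t<|x|<2t\}$ and using $\liminf_{t\to\infty}t^{p-n}\sigma(B(0,t))=0$; cf.\ \cite{PV1, PV2}). Minimality is immediate: any $\mathcal{A}$-superharmonic $v\ge0$ with $-{\rm div}\,\mathcal{A}(x,\nabla v)\ge\sigma$ satisfies, on each $B_j$, $\mu[v]\ge\sigma_j$ and $v\ge0=u_j$ on $\partial B_j$, so $v\ge u_j$ by comparison, whence $v\ge u$ on $\RR^n$.

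The step I expect to be the main obstacle is the passage to the limit above: ensuring that $\mu[u_j]\to\mu[u]$ with $\mu[u]$ exactly equal to $\sigma$ — that is, that in the monotone limit no mass escapes to infinity and none collapses onto a set of zero $p$-capacity. This rests on the a.e.\ convergence $Du_j\to Du$ together with the stability theory for $\mathcal{A}$-superharmonic/renormalized solutions, and it is precisely here that the hypothesis $\sigma\ll{\rm cap}_p$ and the locally uniform control by the (finite) Wolff potential ${\bf W}_{1,p}\sigma$ are used. A secondary technical point requiring care is the decay property $\liminf_{|x|\to\infty}{\bf W}_{1,p}\sigma=0$ needed to verify the condition at infinity.
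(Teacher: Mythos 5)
Your exhaustion scheme (solve on balls $B_j$ with data $\sigma|_{B_j}$ and zero boundary values, pass to the monotone limit, identify the Riesz measure by weak continuity, get the condition at infinity from $u\le K\,{\bf W}_{1,p}\sigma$) is the same skeleton as the paper's proof, and those parts are fine. The gap is in the comparison steps, which you invoke three times (monotonicity $u_j\le u_{j+1}$, the bound $u_j\le s_j$, and minimality $u_j\le v$) with only a generic citation to \cite{KM2, DMOP}. What you actually need there is a comparison principle between a renormalized solution of a bounded-domain problem with a \emph{diffuse but not finite-energy} finite measure (so $u_j$ is in general not in $W^{1,p}_0(B_j)$, only its truncations are) and an arbitrary $\mathcal{A}$-superharmonic function whose Riesz measure dominates the datum. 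That statement is not available off the shelf in the references you cite; it is exactly the delicate point here -- the analogous Lemma 5.2 of \cite{CV2} contained inaccuracies, and the present paper has to prove a corrected version, Lemma \ref{CP-corrected}. As written, "so $v\ge u_j$ by comparison" is therefore an unproved key step rather than a routine citation.

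The gap is repairable in two ways. Either invoke Lemma \ref{CP-corrected} explicitly (its proof uses only the variational comparison \cite[Lemma 5.1]{CV2} and the stability/uniqueness theory of \cite{DMOP, MP}, so there is no circularity with the theorem you are proving); note that in your minimality step its hypotheses hold since $\min\{v,k\}\in W^{1,p}(B_j)$ for an entire $\mathcal{A}$-superharmonic $v$ and the Riesz measure of $v$ dominates $\sigma|_{B_j}$ on $B_j$. Or follow the paper's device, which avoids the strong comparison principle altogether: approximate with $\sigma_k\defeq\sigma|_{B_k(0)\cap\{{\bf W}_{1,p}\sigma<k\}}$ instead of $\sigma|_{B_k}$. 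This truncation forces $\int {\bf W}_{1,p}\sigma_k\,d\sigma_k<\infty$, hence $\sigma_k\in W^{-1,p'}(B_k(0))$ and $u_k\in W^{1,p}_0(B_k(0))$ is a genuine variational solution; moreover, writing the Riesz measure of $\min\{v,j\}$ as $\nu_j=\sigma|_{\{v<j\}}+\alpha_j$ and using $v\le K\,{\bf W}_{1,p}\sigma$ gives $\nu_j\ge\sigma_k$ for $j\ge Kk$, so the elementary $W^{1,p}$ comparison \cite[Lemma 5.1]{CV2} between $u_k$ and $\min\{v,j\}$ suffices for minimality (this is also where the hypothesis $\sigma\ll{\rm cap}_p$ enters, to guarantee $\sigma_k\to\sigma$). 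Your treatment of the remaining points (identification of $\mu[u]=\sigma$ via \cite{TW, KM1}, and $\liminf_{|x|\to\infty}{\bf W}_{1,p}\sigma=0$ from \eqref{finiteness}) matches the paper and needs no change.
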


\begin{proof} Condition \eqref{finiteness} implies that 
	$$\int_{1}^{\infty} \left(\frac{\sigma(B(x,\rho))}{\rho^{n-p}} \right)^{\frac{1}{p-1}}\frac{d\rho}{\rho}<+\infty$$
for all $x\in\RR^n$. Thus,
$$\{{\bf W}_{1,p}\sigma=\infty\}= \left\{x\in\RR^n: {\bf W}^1_{1,p}\sigma:=\int_{0}^{1} \left(\frac{\sigma(B(x,\rho))}{\rho^{n-p}} \right)^{\frac{1}{p-1}}\frac{d\rho}{\rho}=\infty\right \}.$$ 

This yields
\begin{align*}
{\rm cap}_p(\{{\bf W}_{1,p}\sigma=\infty\}) &= \lim_{j\rightarrow\infty} {\rm cap}_p(\{{\bf W}_{1,p}\sigma=\infty\}\cap B_j(0)) \\
&= \lim_{j\rightarrow\infty} {\rm cap}_p(\{x\in B_j(0): {\bf W}^1_{1,p}(\sigma|_{B_{j+1}(0)})=\infty \}) \\
&=0.
\end{align*}
 Here we used the fact that, if $\mu\in \MM_b(\RR^n)$, 
then  ${\rm cap}_p(\{{\bf W}_{1,p}\mu=\infty \})=0$ (see \cite[Proposition 6.3.12]{AH}). 
It follows that  $\sigma(\{{\bf W}_{1,p}\sigma=\infty\})=0$, since  $\sigma<<{\rm cap}_p$.

	Let $\sigma_k$ ($k=1, 2, \dots$) be the restriction of $\sigma$ to the set $B_k(0)\cap\{ {\bf W}_{1,p}\sigma < k\}$. We then have that $\sigma_k$ weakly converges to $\sigma$, and
	$$\int_{\RR^n} {\bf W}_{1,p}\sigma_k \, d \sigma_k \leq  k\sigma(B_k(0))<+\infty.$$

Hence, 	$\sigma_k\in W^{-1, p'}(B_k(0))$ ($1/p+1/p'=1$), and for each $k>0$, there exists a unique nonnegative solution $u_k\in W^{1,p}_0(B_k(0))$ to the problem
\begin{eqnarray}\label{zzz}
\left\{\begin{array}{rcl}
-{\rm div}\, \mathcal{A}(x,\nabla u_k) &=& \sigma_k \quad {\rm in}~ B_k(0),\\
u_k&=&0\quad ~~~~~~~~~~~~ {\rm on~}\partial B_k(0).
\end{array}
\right.
\end{eqnarray}	

If we set $u_k=0$ in $\RR^n\setminus B_k(0)$, then  the sequence $\{u_k\}$ is non-decreasing, and by \cite[Theorem 2.1{\tiny }]{PV1},
$$u_k\leq K\, {\bf W}_{1,p}\sigma <\infty \quad d \sigma-{\rm a.e.}$$ 
By \cite[Theorem 1.17]{KM1}, it follows that the function $u\defeq\lim_{k\to \infty} u_k $ is 
$\mathcal{A}$-superharmonic  in $\RR^n$.
Moreover, $u \le K\, {\bf W}_{1,p}\sigma$, and 
consequently 
$$\liminf_{|x| \to \infty} u(x) \le K \, \liminf_{|x| \to \infty} {\bf W}_{1,p}\sigma(x)=0.$$
  Thus, $u$ is an $\mathcal{A}$-superharmonic solution of \eqref{Basic-PDE}. 

To show the minimality of $u$, let $v$ be another $\mathcal{A}$-superharmonic solution of \eqref{Basic-PDE}. From the construction of $u$, it is enough to show that 
$u_k\leq v$ for any $k\geq 1$. To  this end, let $\nu_j$, $j=1,2, \dots$, be the Riesz measure of ${\rm min}\{v,j\}$. Since $v$ is $\mathcal{A}$-superharmonic, it is also a local renormalized solution
to $-{\rm div}\, \mathcal{A}(x, \nabla v)    = \sigma$ in $\RR^n$   (see \cite{KKT}). Hence, by a result of \cite{DMOP, BV} and the fact that $\sigma<<{\rm cap}_p$, we obtain 
\begin{equation*}
\nu_j=\sigma|_{\{v<j\}} + \alpha_j 
\end{equation*}
for $\alpha_j\in \MM(\RR^n)$ concentrated in the set $\{v=j\}$. 

Using the estimate $v\leq K{\bf W}_{1,p}\sigma$, we deduce   
$$\nu_j \geq  \sigma|_{\{v<j\}} \geq \sigma|_{\{{ K \bf W}_{1,p}\sigma <j\}}\geq \sigma|_{\{{\bf W}_{1,p}\sigma <k\}}\geq \sigma_k,$$
provided $ j/K> k$. Since $u_k\in W^{1,p}_0(B_k(0))$ and ${\rm min}\{v,j\}\in W^{1,p}(B_k(0))$, by the comparison principle (see \cite[Lemma 5.1]{CV2}), we estimate 
$$u_k\leq {\rm min}\{v,j\}\leq v,$$
provided $j\geq K k$. Thus, $u=\lim_{k \to \infty} u_k \le v$. 
This completes the proof of the theorem.
\end{proof}

 The proof of the minimality of $u$ above can be  modified to obtain the following comparison principle.
\begin{theorem}[Comparison Principle]\label{comprin}	
	Let  $\sigma, \tilde{\sigma} \in \MM(\RR^n)$, where $\sigma \leq \tilde{\sigma}$ and $\sigma<< {\rm cap}_p$,  $1<p<n$. Then $u \le \tilde{u}$, 
where $u$ is the minimal $\mathcal{A}$-superharmonic solution  of \eqref{Basic-PDE} and $\tilde{u}$ is  any  	$\mathcal{A}$-superharmonic solution  of \eqref{Basic-PDE} with datum $\tilde{\sigma}$
in place of $\sigma$.
\end{theorem}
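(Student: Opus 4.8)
The plan is to follow the proof of minimality of $u$ in Theorem~\ref{miniexist} almost verbatim, replacing the single measure $\sigma$ by the pair $\sigma \le \tilde\sigma$ and reusing the approximating sequence $u_k$ that defines $u$. First I would recall that $u = \lim_{k\to\infty} u_k$, where $u_k \in W^{1,p}_0(B_k(0))$ solves \eqref{zzz} with right-hand side $\sigma_k = \sigma|_{B_k(0)\cap\{{\bf W}_{1,p}\sigma < k\}}$; this construction requires only that $\sigma \in \MM(\RR^n)$, $\sigma \ll {\rm cap}_p$, and that \eqref{finiteness} holds for $\sigma$. Note that \eqref{finiteness} for $\tilde\sigma$ is automatically available since $\tilde u$ is assumed to exist as an $\mathcal A$-superharmonic solution, which forces ${\bf W}_{1,p}\tilde\sigma \not\equiv \infty$; but in fact we only need \eqref{finiteness} for $\sigma$ to make sense of the minimal solution $u$ on the left.

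Next, fix $k \ge 1$; it suffices to show $u_k \le \tilde u$. Let $\tilde\nu_j$ be the Riesz measure of $\min\{\tilde u, j\}$. Since $\tilde u$ is $\mathcal A$-superharmonic, it is a local renormalized solution of $-{\rm div}\,\mathcal A(x,\nabla \tilde u) = \tilde\sigma$ in $\RR^n$ (by \cite{KKT}), and since $\sigma \ll {\rm cap}_p$ we have $\tilde\sigma_0 \ge \sigma$ (here I would split $\tilde\sigma = \tilde\sigma_0 + \tilde\sigma_s$ into its $p$-capacity-absolutely-continuous and singular parts and observe $\sigma \le \tilde\sigma$ together with $\sigma \ll {\rm cap}_p$ gives $\sigma \le \tilde\sigma_0$). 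By the structure theorem of \cite{DMOP, BV} for the Riesz measures of truncations, $\tilde\nu_j = \tilde\sigma_0|_{\{\tilde u < j\}} + \tilde\alpha_j$ with $\tilde\alpha_j \ge 0$ concentrated on $\{\tilde u = j\}$. Using the Kilpel\"ainen--Mal\'y upper bound $\tilde u \le K\,{\bf W}_{1,p}\tilde\sigma$ — and the trivial monotonicity ${\bf W}_{1,p}\sigma \le {\bf W}_{1,p}\tilde\sigma$ — we get, exactly as in Theorem~\ref{miniexist},
\begin{equation*}
\tilde\nu_j \;\ge\; \tilde\sigma_0|_{\{\tilde u < j\}} \;\ge\; \sigma|_{\{K{\bf W}_{1,p}\tilde\sigma < j\}} \;\ge\; \sigma|_{\{K{\bf W}_{1,p}\sigma < j\}} \;\ge\; \sigma|_{\{{\bf W}_{1,p}\sigma < k\}} \;\ge\; \sigma_k,
\end{equation*}
provided $j \ge Kk$. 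Then $u_k \in W^{1,p}_0(B_k(0))$ and $\min\{\tilde u, j\} \in W^{1,p}(B_k(0))$ both have $\mathcal A$-Laplacian measures comparable on $B_k(0)$ with $-{\rm div}\,\mathcal A(x,\nabla u_k) = \sigma_k \le \tilde\nu_j = -{\rm div}\,\mathcal A(x,\nabla \min\{\tilde u,j\})$ there, so the comparison principle of \cite[Lemma 5.1]{CV2} (or the corrected Lemma~\ref{CP-corrected}) yields $u_k \le \min\{\tilde u, j\} \le \tilde u$ on $B_k(0)$, hence everywhere. Letting $k \to \infty$ gives $u \le \tilde u$.

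The one point requiring genuine care — and the step I expect to be the main obstacle — is the passage $\sigma \ll {\rm cap}_p$, $\sigma \le \tilde\sigma$ $\Longrightarrow$ $\sigma \le \tilde\sigma_0$ and the correct invocation of the decomposition of the Riesz measures $\tilde\nu_j$ of the truncations of $\tilde u$: one must be sure that the capacitary-continuous part of $\tilde\nu_j$ restricted to $\{\tilde u < j\}$ really dominates $\sigma$, which uses that $\tilde u < j$ holds quasi-everywhere off a $p$-polar set and that $\sigma$ charges no polar set. This is precisely the place where the inaccuracies in the original \cite{CV2} comparison principle arose, so I would phrase it through Lemma~\ref{CP-corrected} rather than \cite[Lemma 5.1]{CV2}. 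Everything else is a direct transcription of the minimality argument already carried out in the proof of Theorem~\ref{miniexist}, with $\tilde u$ in the role of the competitor $v$ and with the monotonicity ${\bf W}_{1,p}\sigma \le {\bf W}_{1,p}\tilde\sigma$ inserted at the one spot where the estimate $v \le K{\bf W}_{1,p}\sigma$ was used.
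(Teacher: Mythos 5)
Your argument breaks at the third inequality of your displayed chain. From $\sigma \le \tilde\sigma$ you get ${\bf W}_{1,p}\sigma \le {\bf W}_{1,p}\tilde\sigma$, hence the inclusion $\{K{\bf W}_{1,p}\tilde\sigma < j\} \subseteq \{K{\bf W}_{1,p}\sigma < j\}$, so the restrictions satisfy $\sigma|_{\{K{\bf W}_{1,p}\tilde\sigma < j\}} \le \sigma|_{\{K{\bf W}_{1,p}\sigma < j\}}$ --- the opposite of what you wrote. What the truncation argument genuinely yields is $\tilde\nu_j \ge \sigma|_{\{{\bf W}_{1,p}\tilde\sigma < k\}}$ for $j \ge Kk$, and this does \emph{not} dominate the measure $\sigma_k = \sigma|_{B_k(0)\cap\{{\bf W}_{1,p}\sigma<k\}}$ used to build the minimal solution in Theorem \ref{miniexist}: there may be a set of positive $\sigma$-measure on which ${\bf W}_{1,p}\sigma < k$ but ${\bf W}_{1,p}\tilde\sigma$ is arbitrarily large, and no fixed $j$ controls it. So you cannot invoke the comparison lemma to get $u_k \le \min\{\tilde u, j\}$, and the proof as written does not close. (Your preliminary observation that $\sigma \le \tilde\sigma$ and $\sigma \ll {\rm cap}_p$ give $\sigma \le \tilde\sigma_0$ is correct, and is not where the difficulty lies; the paper handles that point by noting $\tilde\sigma_s(\{\tilde u < \infty\}) = 0$.)

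The paper's proof repairs exactly this defect by \emph{changing the approximating sequence}: it sets $\sigma'_k := \sigma|_{B_k(0)\cap\{{\bf W}_{1,p}\tilde\sigma < k\}}$, i.e.\ it truncates on level sets of the \emph{larger} potential ${\bf W}_{1,p}\tilde\sigma$, solves the Dirichlet problems in $B_k(0)$ to get a nondecreasing sequence $u'_k$, and shows its limit $u'$ is an $\mathcal{A}$-superharmonic solution with datum $\sigma$; then, since $\sigma'_k \le \sigma_k$ gives $u'_k \le u_k$ by \cite[Lemma 5.1]{CV2}, one has $u' \le u$, hence $u' = u$ by minimality of $u$. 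With this choice the chain $\tilde\nu_j \ge \sigma|_{\{K{\bf W}_{1,p}\tilde\sigma<j\}} \ge \sigma|_{\{{\bf W}_{1,p}\tilde\sigma<k\}} \ge \sigma'_k$ (for $j \ge Kk$) is valid, the comparison lemma gives $u'_k \le \min\{\tilde u, j\} \le \tilde u$, and letting $k\to\infty$ yields $u = u' \le \tilde u$. If you want to salvage your version, you must either adopt this modified sequence or insert an additional limiting argument in $j$ identifying $\lim_j$ of solutions with data $\sigma|_{B_k(0)\cap\{{\bf W}_{1,p}\sigma<k\}\cap\{K{\bf W}_{1,p}\tilde\sigma<j\}}$ with $u_k$; as written, the key domination $\sigma_k \le \tilde\nu_j$ is simply false in general.
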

\begin{proof}
	Let $\sigma'_k$, $k=1,2, \dots$, be the restriction of $\sigma$ to the set $B_k(0)\cap\{ {\bf W}_{1,p}\tilde{\sigma} < k\}$. Since $\sigma<<{\rm cap}_p$ we  have that $\sigma'_k$ weakly converges to $\sigma$.  Moreover, as ${\bf W}_{1,p}\sigma'_k\leq {\bf W}_{1,p}\sigma\leq  {\bf W}_{1,p}\tilde{\sigma} < k$ on the set $\{ {\bf W}_{1,p}\tilde{\sigma} < k\}$, it follows that 
	$$\int_{\RR^n} {\bf W}_{1,p}\sigma'_k  \, d \sigma'_k\leq  k \, \sigma(B_k(0))<+\infty.$$
	
	Hence,	$\sigma'_k\in W^{-1, \frac{p}{p-1}}(B_k(0))$, and for each $k>0$ there exists a unique nonnegative solution $u'_k\in W^{1,p}_0(B_k(0))$ to the problem
	\begin{eqnarray*}
	\left\{\begin{array}{rcl}
	-{\rm div}\, \mathcal{A}(x,\nabla u'_k) &=& \sigma'_k \quad {\rm in}~ B_k(0),\\
	u'_k&=&0\quad ~~~~~~~~~~~~ {\rm on~}\partial B_k(0).
	\end{array}
	\right.
	\end{eqnarray*}	
	
	Letting $u'_k=0$ in $\RR^n\setminus B_k(0)$, we have  that the sequence $\{u'_k\}$ is non-decreasing, and by \cite[Theorem 2.1]{PV1},
	$$u'_k\leq K\, {\bf W}_{1,p}\sigma.$$ 
	
	Then  $u'_k$ converges pointwise to an 
	$\mathcal{A}$-superharmonic solution $u'$ of \eqref{Basic-PDE} 
	by \cite[Theorem 1.17]{KM1}. 
	On the other hand, by the comparison principle of \cite[Lemma 5.1]{CV2}, we have 
	$$u'_k\leq u_k, \qquad \forall k\geq 1,$$
	where $u_k$ is defined  in \eqref{zzz}. Hence,  letting $k\rightarrow\infty$, we get $u'\leq u$, which yields  $u'=u$ by the minimality of $u$.

	 We now let $\tilde{\sigma}_j$ ($j=1,2, \dots$) be the Riesz measure of ${\rm min}\{\tilde{u},j\}$. 
	Recall that the Riesz measure $\tilde{\sigma}$ of $\tilde{u}$ can be decomposed as
	$$\tilde{\sigma}=\tilde{\sigma}_0 +\tilde{\sigma}_s,$$ 
	where $\tilde{\sigma}_0\in \MM(\RR^n)$, 
	$\tilde{\sigma}_0<<{\rm cap}_p $, and $\tilde{\sigma}_s\in \MM(\RR^n)$ is concentrated on a set of zero $p$-capacity.
	Then by a result of \cite{DMOP, BV}, we have   
	\begin{equation}\label{truncatemeasure-0}
	\tilde{\sigma}_j=\tilde{\sigma}_0|_{\{\tilde{u}<j\}} + \tilde{\alpha}_j, 
	\end{equation}
	where $\tilde{\alpha}_j \in \MM(\RR^n)$ is concentrated in the set $\{\tilde{u}=j\}$. On the other hand, since 
	$\tilde{\sigma}_s(\{\tilde{u}<\infty\}) =0$ (see \cite[Lemma 2.9]{KKT}), we can rewrite \eqref{truncatemeasure-0} as 
	\begin{equation}\label{truncatemeasure}
	\tilde{\sigma}_j=\tilde{\sigma}|_{\{\tilde{u}<j\}} + \tilde{\alpha}_j. 
	\end{equation}
	
	Now using the estimate $\tilde{u}\leq K{\bf W}_{1,p}\tilde{\sigma}$ and \eqref{truncatemeasure}, we have  
	$$\tilde{\sigma}_j \geq  \tilde{\sigma}|_{\{\tilde{u}<j\}} \geq \sigma|_{\{{ K \bf W}_{1,p}\tilde{\sigma} <j\}}\geq \sigma|_{\{{\bf W}_{1,p}\tilde{\sigma} <k\}}\geq \sigma'_k,$$
	provided $ j/K> k$. 
	
	Since $u'_k\in W^{1,p}_0(B_k(0))$ and ${\rm min}\{\tilde{u},j\}\in W^{1,p}(B_k(0))$, by the comparison principle of \cite[Lemma 5.1]{CV2} we find 
	$$u'_k\leq {\rm min}\{\tilde{u},j\}\leq \tilde{u},$$
	provided we choose a $j$ such that $j\geq K k$. 
	Letting  $k\rightarrow\infty$, we obtain $u\leq \tilde{u}$ as desired.
\end{proof}

Theorem \ref{miniexist} justifies the existence (and hence uniqueness) of the minimal $\mathcal{A}$-superharmonic solution to \eqref{Basic-PDE} provided condition  \eqref{finiteness}  holds and $\sigma << {\rm cap}_p$. 
It is not known if condition \eqref{finiteness} alone is enough for the existence of the minimal solution. It is also not known if under condition \eqref{finiteness}
 and $\sigma << {\rm cap}_p$ all  $\mathcal{A}$-superharmonic solutions to \eqref{Basic-PDE} coincide with the minimal solution. For a partial result in this direction, see Theorem \ref{us} below.
  
We now introduce a new notion of a solution so that  uniqueness is guaranteed for all nonnegative locally finite measures $\sigma$ such that $\sigma<<{\rm cap}_p$.   
Our definition is an adaptation  of the notion of the reachable solution of \cite[Definition 2.3]{DMM}.

\begin{definition}\label{reach}
	Let  $\sigma \in \MM(\RR^n)$.
	We say that a function $u: \RR^n \rightarrow [0, +\infty]$  is an  $\mathcal{A}$-superharmonic reachable solution to equation \eqref{Basic-PDE} 
	if $u$ is an	$\mathcal{A}$-superharmonic solution of \eqref{Basic-PDE}, and there exist two sequences $\{u_i\}$ and $\{\sigma_i\}$, $i=1,2, \dots$,  such that 
	
	\noindent ${\rm (i)}$ Each $\sigma_i\in \MM(\RR^n)$ is  compactly  supported  in $\RR^n$, and  $\sigma_i \leq  \sigma $; 
	
	\noindent ${\rm (ii)}$ Each $u_i$ is 	an $\mathcal{A}$-superharmonic solution of \eqref{Basic-PDE} with datum $\sigma_i$ in place of $\sigma$;

	\noindent ${\rm (iii)}$ $u_i\rightarrow u$ a.e. in $\RR^n$.
\end{definition}

\begin{remark} The notion of reachable solution was introduced in \cite{DMM} for equations over bounded domains with finite measure data. It is also related to the notion of SOLA (Solution Obtained as Limit of Approximations) of 
	\cite{Da} for $L^1$ data over bounded domains.		By  ${\rm (iii)}$ and the weak continuity result of \cite{TW}, we see that   $\sigma_i \rightarrow \sigma$ weakly as measures in $\RR^n$.
	The extra  requirement $\sigma_i \leq \sigma$ in our definition plays an important role in the proof of uniqueness  in the case when the datum $\sigma$ is absolutely continuous with respect to ${\rm cap}_p$. 
\end{remark}

\begin{theorem}\label{EandU}
	Suppose $\sigma\in \MM(\RR^n)$, and suppose 
	\eqref{finiteness} holds. Then there exists an  $\mathcal{A}$-superharmonic reachable solution to \eqref{Basic-PDE}. Moreover, if  additionally $\sigma<< {\rm cap}_p$, then any 
	$\mathcal{A}$-superharmonic reachable solution is unique and coincides with the minimal solution.  
\end{theorem}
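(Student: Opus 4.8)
The plan is to prove existence by an exhaustion argument and uniqueness by showing that \emph{every} $\mathcal{A}$-superharmonic reachable solution coincides with the minimal solution $\underline{u}$ of Theorem~\ref{miniexist} (so that, a posteriori, $\underline{u}$ itself is reachable).

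\emph{Existence.} Set $\sigma_i\defeq\sigma|_{B_i(0)}$ for $i=1,2,\dots$. Each $\sigma_i$ is finite and compactly supported, $\sigma_i\le\sigma$, and $\sigma_i$ satisfies \eqref{finiteness}; hence, by the sufficiency half of the criterion recalled after \eqref{finiteness} (see \cite{PV1,PV2}), there is an $\mathcal{A}$-superharmonic solution $u_i$ of \eqref{Basic-PDE} with datum $\sigma_i$, and by \eqref{Wolffbound}, $0\le u_i\le K\,{\bf W}_{1,p}\sigma_i\le K\,{\bf W}_{1,p}\sigma$. Thus $\{u_i\}$ is locally uniformly bounded and $\{\mu[u_i]\}=\{\sigma_i\}$ is locally uniformly bounded, so by the compactness properties of $\mathcal{A}$-superharmonic functions (\cite{KM1,HKM}) a subsequence $u_{i_\ell}$ converges a.e.\ to an $\mathcal{A}$-superharmonic function $u$ with $0\le u\le K\,{\bf W}_{1,p}\sigma$; in particular $\liminf_{|x|\to\infty}u=0$. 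Since $\sigma_{i_\ell}\uparrow\sigma$, the weak continuity result of \cite{TW} gives $\mu[u]=\sigma$, so $u$ solves \eqref{Basic-PDE}; and $u$ is reachable by construction, with witnessing sequences $\{u_{i_\ell}\}$, $\{\sigma_{i_\ell}\}$.

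\emph{Uniqueness.} Assume in addition $\sigma<<{\rm cap}_p$, and let $\underline{u}$ be the minimal $\mathcal{A}$-superharmonic solution of \eqref{Basic-PDE} (Theorem~\ref{miniexist}). Let $u$ be any reachable solution, with witnessing sequences $\{u_i\}$, $\{\sigma_i\}$ from Definition~\ref{reach}; then each $\sigma_i<<{\rm cap}_p$, and $\sigma_i$ is supported in some ball $B_{R_i}(0)$. The key point is that $u_i\le\underline{u}$ for every $i$. Indeed, by \eqref{Wolffbound}, $u_i\le K\,{\bf W}_{1,p}\sigma_i$, and since $\sigma_i$ is finite and supported in $B_{R_i}(0)$, the potential ${\bf W}_{1,p}\sigma_i$ is continuous outside $\overline{B_{R_i}(0)}$ and tends to $0$ at infinity; hence $\delta_k\defeq K\sup_{|x|=k}{\bf W}_{1,p}\sigma_i\to 0$ as $k\to\infty$. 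Fix $k>R_i$. In $B_k(0)$ the function $u_i$ is $\mathcal{A}$-superharmonic with Riesz measure $\sigma_i|_{B_k(0)}=\sigma_i$, while $\underline{u}+\delta_k$ is $\mathcal{A}$-superharmonic with Riesz measure $\sigma|_{B_k(0)}\ge\sigma_i$; moreover $u_i\le K\,{\bf W}_{1,p}\sigma_i\le\delta_k$ near $\partial B_k(0)$ and $\underline{u}\ge 0$, so $\limsup_{x\to\xi}\big(u_i(x)-\underline{u}(x)-\delta_k\big)\le 0$ for every $\xi\in\partial B_k(0)$. By the comparison principle of Lemma~\ref{CP-corrected} (applicable since $\sigma_i,\sigma<<{\rm cap}_p$) we obtain $u_i\le\underline{u}+\delta_k$ in $B_k(0)$, and letting $k\to\infty$ gives $u_i\le\underline{u}$ in $\RR^n$. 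Consequently $u=\lim_i u_i\le\underline{u}$ a.e., hence everywhere (both functions being $\mathcal{A}$-superharmonic), while $\underline{u}\le u$ by the minimality of $\underline{u}$; therefore $u=\underline{u}$, which proves both the uniqueness and the coincidence with the minimal solution.

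\emph{Main obstacle.} The delicate step is the comparison on the balls $B_k(0)$: it must be applied to two $\mathcal{A}$-superharmonic functions that need not belong to $W^{1,p}_{\rm loc}$ (since $\sigma$ and the $\sigma_i$ are only locally finite measures absolutely continuous with respect to ${\rm cap}_p$), with merely ordered Riesz measures and with boundary values ordered only in the $\limsup$ sense. This is precisely the situation addressed by the corrected comparison principle, Lemma~\ref{CP-corrected}; if one insisted on using only the $W^{1,p}_0$-versus-$W^{1,p}$ form employed in the proof of Theorem~\ref{miniexist}, one would instead reduce, via Theorem~\ref{comprin}, to showing that each $u_i$ equals the minimal solution with its own datum $\sigma_i$, and then establish that uniqueness for compactly supported data by the same large-ball comparison together with the Wolff-potential bound.
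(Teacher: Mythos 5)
Your existence argument is essentially the paper's (exhaustion by $\sigma|_{B_i(0)}$, the Wolff potential bound \eqref{Wolffbound}, compactness of $\mathcal{A}$-superharmonic functions, and the weak continuity result of \cite{TW}), and it is fine. The uniqueness argument, however, has a genuine gap at its central step: the claimed comparison $u_i\le\underline{u}+\delta_k$ in $B_k(0)$. Lemma \ref{CP-corrected} does not apply in the situation where you invoke it: its hypotheses require the smaller function to be a renormalized solution of the \emph{homogeneous Dirichlet problem} in the bounded domain, i.e.\ zero boundary data encoded by $T_k(u)\in W^{1,p}_0(\Omega)$, and it contains no provision for boundary values ordered merely in the $\limsup$ sense. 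The restriction of $u_i$ to $B_k(0)$ is not such a solution (its boundary values are small but not zero), and this cannot be repaired by truncating: $\max\{u_i-\delta_k,0\}$ is not $\mathcal{A}$-superharmonic and its Riesz measure in $B_k(0)$ is not controlled by $\sigma_i$. More fundamentally, no comparison principle of the form ``ordered Riesz measures plus pointwise ($\limsup$) boundary ordering implies ordering inside'' is available for quasilinear equations with measure data when $p\neq 2$; establishing such a local principle would be of essentially the same depth as the theorem you are trying to prove, so the one-line appeal hides the whole difficulty.

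The paper's proof avoids any boundary comparison on balls. It fixes $i$, takes $N$ with ${\rm supp}\,\sigma_i\subset B_N(0)$, and compares $u_i$ with the minimal solution $v_N$ of the truncated datum $\sigma|_{B_N(0)}$ globally in $\RR^n$: both are local renormalized solutions, so one may test the two equations with $h_m(u_i)\,h_m(v_N)\,T^{+}_k(u_i-v_N)\,\Theta_R$ and subtract. The terms coming from $h_m'$ vanish as $m\to\infty$, while the cutoff term is killed as $R\to\infty$ using the decay ${\bf W}_{1,p}(\sigma|_{B_N(0)})\approx R^{\frac{p-n}{p-1}}$ on the annulus $A_R$ together with Caccioppoli and weak Harnack estimates. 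One then concludes that $\nabla T^{+}_k(u_i-v_N)=0$ a.e.\ (after checking $T^{+}_k(u_i-v_N)\in W^{1,p}_{\rm loc}(\RR^n)$), so $\max\{u_i-v_N,0\}$ is constant, and the constant is zero by the condition at infinity; then $u=\lim_i u_i\le v_N\le v\le u$ gives the result. If you want to keep your structure, this global test-function step is what must replace the appeal to Lemma \ref{CP-corrected}; note also that Theorem \ref{us}(iii), which would tell you that each $u_i$ is minimal for its finite datum $\sigma_i$, is itself proved by the same machinery, so it cannot serve as a shortcut here.
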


\begin{proof} {\bf Existence:} Suppose that \eqref{finiteness} holds. Then  ${\bf W}_{1,p}\sigma <+\infty$ quasi-everywhere and hence almost everywhere.
	For each $i=1,2, \dots$, let $u^{j}_{i}$ be an $\mathcal{A}$-superharmonic  renormalized solution (see \cite{DMOP}) to 
	\begin{eqnarray*}
	\left\{\begin{array}{rcl}
	-{\rm div}\, \mathcal{A}(x, \nabla u_i^j) &=& \sigma|_{B_{i}(0)} \quad {\rm in}~ B_j(0),\\
	u_i^j&=&0\quad ~~~~~~~~~~~~ {\rm on~}\partial B_j(0).
	\end{array}
	\right.
	\end{eqnarray*}

	Note that $\sigma|_{B_{i}(0)}\leq \sigma$ and $\sigma|_{B_{i}(0)}\rightarrow \sigma$ weakly as measures in $\RR^n$. Also,
	by \cite{PV1}, we have 	
	$$u_i^j\leq K\, {\bf W}_{1,p}(\sigma|_{B_{i}(0)}). $$ 
	
	Hence, by \cite[Theorem 1.17]{KM1}, there exist an $\mathcal{A}$-superharmonic function $u_i$ in $\RR^n$ with 
	\begin{equation}\label{uibound}
	u_i\leq K\, {\bf W}_{1,p}(\sigma|_{B_i(0)})\leq K\, {\bf W}_{1,p}\sigma <+\infty {\rm ~a.e.},
	\end{equation}
	and a subsequence $\{u_i^{j_k}\}_{k}$
	such that $u_{i}^{j_k}\rightarrow u$ and $D u_{i}^{j_k} \rightarrow D u_{i}$ a.e. as $k\rightarrow \infty$.  
	These estimates yield that the Riesz measure of $u_i$ is $\sigma|_{B_{i}(0)}$ and $$\liminf_{|x|\rightarrow \infty}\,  u_i = 0.$$
	
	Using again \cite[Theorem 1.17]{KM1} and \eqref{uibound}, we find a subsequence of $\{u_i\}$ that converges a.e. to 
	an $\mathcal{A}$-superharmonic reachable solution $u$ of \eqref{Basic-PDE}. 
	
	\noindent {\bf Uniqueness:} We now assume further that $\sigma<< {\rm cap}_p$. Let $u$ be an $\mathcal{A}$-superharmonic reachable solution in the sense of Definition \ref{reach} with approximating sequences $\{{u_i}\}$ and $\{\sigma_i\}$.  Let us fix an $i\in \{1,2,\dots\}$. Then there exists a positive integer $N=N(i)$ such that ${\rm supp}(\sigma_i)\subset B_N(0)$. Let   $v$  be the minimal $\mathcal{A}$-superharmonic solution to \eqref{Basic-PDE}. Also, let   $v_N$  be the minimal $\mathcal{A}$-superharmonic solution to \eqref{Basic-PDE} with datum  $\sigma|_{B_N(0)}$ in place of $\sigma$. We have, by Theorem \ref{comprin}, 
	$$u\geq v \geq v_N.$$
	
	Thus, as $u_j \rightarrow u$ a.e., it is enough to show that 
	\begin{equation}\label{uileqvn}
	v_N\geq u_i.
	\end{equation}
	
	Note that  since $\sigma_i\leq \sigma$ and ${\rm supp}(\sigma_i)\subset B_N(0)$ we have that 
	\begin{equation}\label{sigmaivssigmaB}
	\sigma_i\leq \sigma|_{B_N(0)}.
	\end{equation}

	For $R>0$, let $0\leq \Theta=\Theta_R\leq 1$ be a  cutoff function such that 
	$$\Theta\in C_0^\infty(B_R(0)), \quad  \quad \Theta \equiv 1 {\rm ~ on~} B_{R/2}(0), \quad {\rm and} \quad
	 |\nabla \Theta|\leq C/R.$$
	For any $k>0$, we set 
	\begin{equation*}
	T^{+}_k(t) =\left\{ \begin{array}{ll}
	t  \quad \text{if } 0\leq t\leq k, \\
	k \quad \text{if } t>k,\\
	0 \quad \text{if } t<0.
	\end{array}
	\right.
	\end{equation*}
Also, for any $m>0$, we define the following Lipschitz function with compact support on $\RR$:
\begin{equation*}
h_m(t) =\left\{ \begin{array}{ll}
1  \quad \text{if } 0\leq |t|\leq m, \\
0 \quad \text{if } |t|\geq 2m,\\
-\frac{t}{m} + 2  \quad \text{if } m<t<2m,\\
\frac{t}{m} + 2  \quad \text{if } -2m<t<-m.
\end{array}
\right.
\end{equation*}

As $u_i$ and $v_N$ are both local renormalized solutions (see \cite{BV, KKT}), we may use 
$$h_m(u_i) h_m(v_n)T^{+}_k(u_i-v_N)\Theta, \qquad m, k>0,$$
as test functions and thus obtaining
\begin{align*}
&\int_{\RR^n} \mathcal{A}(x, D u_i) \cdot \nabla \left[h_m(u_i) h_m(v_N)T^{+}_k(u_i-v_N)\Theta\right]dx \\
&\qquad = \int_{\RR^n} h_m(u_i) h_m(v_n)T^{+}_k(u_i-v_N)\Theta d\sigma_i,
\end{align*}
and 
\begin{align*}
&\int_{\RR^n} \mathcal{A}(x, D v_N)  \cdot \nabla \left [h_m(u_i) h_m(v_N)T^{+}_k(u_i-v_N)\Theta\right] dx\\
&\qquad = \int_{\RR^n} h_m(u_i) h_m(v_n)T^{+}_k(u_i-v_N)\Theta d\sigma|_{B_N(0)}.
\end{align*}

	Let
	$$I= \int_{\RR^n} \mathcal{A}(x, D u_i) \cdot \nabla\left[h_m(u_i) h_m(v_N)T^{+}_k(u_i-v_N)\Theta\right]dx,$$
	and 
	$$II=\int_{\RR^n} \mathcal{A}(x, D v_N) \cdot \nabla\left[h_m(u_i) h_m(v_N)T^{+}_k(u_i-v_N)\Theta\right]dx.$$
	
	Then by \eqref{sigmaivssigmaB} we have 
	\begin{equation}\label{I-II}
	 I -II\leq 0.
	\end{equation}
	
	On the other hand, we can write 
	\begin{align*}
	&I -II\\ 
	&= \int_{\RR^n} \left[\mathcal{A}(x, D u_i)- \mathcal{A}(x, D v_N)\right] \cdot \nabla T^{+}_k(u_i-v_N)  \, h_m(u_i) \, h_m(v_N) \, \Theta \, dx\\
	& \quad + \int_{\RR^n} \left[\mathcal{A}(x, D u_i)- \mathcal{A}(x, D v_N)\right]   \cdot D u_i \, h_m'(u_i) \, T^{+}_k(u_i-v_N)  \, h_m(v_N) \, \Theta \, dx \nonumber\\
	& \quad + \int_{\RR^n} \left[\mathcal{A}(x, D u_i)- \mathcal{A}(x, D v_N)\right] \cdot D v_N \, h_m'(v_N) \, T^{+}_k(u_i-v_N) \, h_m(u_i) \, \Theta \, dx \nonumber\\
	&\quad + \int_{\RR^n} \left[\mathcal{A}(x, D u_i)- \mathcal{A}(x, D v_N)\right] \cdot \nabla \Theta \, T^{+}_k(u_i-v_N) \, h_m(u_i) \, h_m(v_N) \, dx. \nonumber
	\end{align*}
	
	Thus, in view of \eqref{I-II}, it follows that 
	\begin{align*}
	 &\int_{\{ 0<u_i -v_N<k\}} \left[\mathcal{A}(x, D u_i)- \mathcal{A}(x, D v_N)\right]  \cdot (D u_i- D v_N)  \, h_m(u_i) \, h_m(v_N) \, \Theta \, dx\\
	& \leq - \int_{\RR^n} \left[\mathcal{A}(x, D u_i)- \mathcal{A}(x, D v_N)\right] \cdot D u_i \, h_m'(u_i) \, T^{+}_k(u_i-v_N)  \, h_m(v_N) \, \Theta \, dx \nonumber\\
	&\quad - \int_{\RR^n} \left[\mathcal{A}(x, D u_i)- \mathcal{A}(x, D v_N)\right] \cdot D v_N \, h_m'(v_N) \, T^{+}_k(u_i-v_N) \, h_m(u_i) \, \Theta \, dx \nonumber\\
	&\quad - \int_{\RR^n} \left[\mathcal{A}(x, D u_i)- \mathcal{A}(x, D v_N)\right] \cdot \nabla \Theta \, T^{+}_k(u_i-v_N) \, h_m(u_i) \, h_m(v_N) \, dx \nonumber\\
	&\quad =: A_m+B_m+C_m.
	\end{align*}
	
	To estimate $|A_m|$, we observe  that $|h_m(t)|\leq 1$ and $|h'_m(t)|\leq 1/m$. Hence, 
\begin{align*}
|A_m| &\leq \beta \frac{k}{m} \int_{\{ m<u_i<2m, \, 0<v_N<2m\}} \left[|D u_i|^{p-1}+ |D v_N|^{p-1}\right]  |D u_i| \, \Theta \, dx \nonumber\\
&\leq C \frac{k}{m} \int_{\{ 0<u_i<2m, \, 0<v_N<2m\}} \left[|D u_i|^{p}+ |D v_N|^{p}\right] \, \Theta \, dx. \nonumber
\end{align*}

On the other hand, using $T^{+}_{2m}(u_i) \, \Theta$ as a test function for the  equation of $u_i$ and invoking condition \eqref{structure}, we estimate  
\begin{align*}
\alpha \int_{0<u_i<2m} |D u_i|^p \, \Theta \, dx  &\leq  \int_{\RR^n} T^{+}_{2m}(u_i) \, \Theta \, d\sigma_i 
\\ & +\beta \int_{\RR^n} |D u_i|^{p-1} T^{+}_{2m}(u_i) \, |\nabla \Theta| \, dx. \nonumber
\end{align*}

Since 	$T^{+}_{2m}(u_i)/m\leq 2$, and $T^{+}_{2m}(u_i)/m$ converges to zero quasi-everywhere, we deduce 
$$ \lim_{m\rightarrow \infty}  \frac{1}{m}\int_{0<u_i<2m} |D u_i|^p \, \Theta \, dx =0.$$

Similarly, 
$$ \lim_{m\rightarrow \infty}  \frac{1}{m}\int_{0<v_N<2m} |D v_N|^p \,  \Theta \, dx =0.$$
Hence,
\begin{align}\label{A}
\lim_{m\rightarrow \infty} |A_m| =0. 
\end{align}

A similar argument gives  
\begin{align*}
\lim_{m\rightarrow \infty} |B_m| =0. 
\end{align*}

To estimate $|C_m|$, we first use the pointwise bound \eqref{Wolffbound} to obtain
\begin{align*} 	
|C_m| \leq  \frac{c}{R} \int_{A_R} \left[ |D u_i|^{p-1} +  |D v_N|^{p-1} \right] \min\{{\bf W}_{1,p}(\sigma|_{B_N(0)}) ,k\} \, dx,
\end{align*}
where $A_R$ is the annulus  $$A_R=\{R/2 < |x| <R\}.$$ 	

Note that for $R>4N$ we have 
$${\bf W}_{1,p}(\sigma|_{B_N(0)})(x) = \int_{R/4}^\infty \left[\frac{\sigma(B_t(x)\cap B_N(0))}{t^{n-p}}\right]^{\frac{1}{p-1}} \frac{dt}{t}\approx R^{\frac{p-n}{p-1}}$$
for all $x\in A_R$. Thus, 
\begin{align*} 	
|C_m| &\leq  c\, R^{\frac{p-n}{p-1}} R^{-1} \int_{A_R} \left[ |D u_i|^{p-1} +  |D v_N|^{p-1} \right] dx\\
&\leq c\,  R^{\frac{p-n}{p-1}} R^{-1} R^{n-p+1} \left[ (\inf_{A_R} u_i)^{p-1} + (\inf_{A_R} v_N)^{p-1}  \right]\\
&\leq c\,  R^{\frac{p-n}{p-1}} R^{-1} R^{n-p+1} R^{p-n} = c\, R^{\frac{p-n}{p-1}}, 
\end{align*}
where we used the Caccioppoli inequality  and the weak Harnack inequality in the second bound. This gives
\begin{align}\label{C}
\lim_{R\rightarrow\infty}\limsup_{m\rightarrow \infty} |C_m| =0. 
\end{align}

Since $h_m(u_i) \, h_m(v_N) \rightarrow 1$ a.e. as $m\rightarrow\infty$, and $\Theta(x)\rightarrow 1$ everywhere as $R\rightarrow\infty$, it follows from \eqref{A}--\eqref{C} and Fatou's lemma that 
\begin{align*}
\int_{\{ 0<u_i -v_N<k\}} \left[\mathcal{A}(x, D u_i)- \mathcal{A}(x, D v_N)\right]  \cdot (D u_i- D v_N) \, dx \leq 0.
\end{align*}

Letting $k\rightarrow\infty$, we deduce 
\begin{align*}
\int_{\{ u_i -v_N>0\}} \left[\mathcal{A}(x, D u_i)- \mathcal{A}(x, D v_N)\right]  \cdot (D u_i- D v_N) dx \leq 0.
\end{align*}

Since the integrand is strictly positive whenever $D u_i \not=D v_N$,   we infer that $D u_i =D v_N$ a.e. on
the set $\{u_i-v_N>0\}$.

We next claim that the function $T_k^{+}(u_i-v_N)$ belongs to $W^{1,p}_{\rm loc}(\RR^n)$ for any $k>0$. To see this, for any $m>k$, we compute
\begin{align}\label{graddetail}
&\nabla T^{+}_k(T^{+}_m(u_i)-T^{+}_m(v_N))\\
&=\left[\nabla T^{+}_m(u_i)-\nabla T^{+}_m(v_N)\right] \chi_{\{0<T^{+}_m(u_i)-T^{+}_m(v_N)<k\}}\nonumber\\
&=\left[Du_i\chi_{\{0<u_i<m\}} - Dv_N\chi_{\{0<v_N<m\}}\right] \chi_{\{0<T^{+}_m(u_i)-T^{+}_m(v_N)<k\}}\nonumber\\
&=\left[Du_i\chi_{\{0<u_i<m\}} - Dv_N\chi_{\{0<v_N<m\}}\right] \chi_{\{0<m-v_N<k, u_i\geq m, v_N<m\}}\nonumber\\
&= - Dv_N\chi_{\{0<v_N<m\}} \chi_{\{0<m- v_N<k, u\geq m, v<m\}},\nonumber
\end{align}
 where $\chi_A$ is the characteristic function of a set $A$. Thus,
\begin{align*}
\int_{\RR^n} |\nabla T^{+}_k(T^{+}_m(u_i)-T^{+}_m(v_N))|^p \, \Theta \, dx\leq  \int_{\{ m-k<v_N<m\}} |D v_N|^p \, \Theta \, dx.
\end{align*}

On the other hand, using $H_{m,k}(v_N)\Theta$ as a test function for the equation of $v_N$, where 
\begin{equation*}
H_{m,k}(t) =\left\{ \begin{array}{ll}
1  \quad \text{if } 0\leq |t|\leq m-k, \\
0 \quad \text{if } |t|\geq m,\\
-\frac{t}{k} + \frac{m}{k}  \quad \text{if } m-k<t<m,\\
\frac{t}{k} +  \frac{m}{k} \quad \text{if } -m<t<-(m-k),
\end{array}
\right.
\end{equation*}
we have  
\begin{align*}
\frac{\alpha}{k}\int_{\{m-k<v_N<m\}} |D v_N|^p \, \Theta \, dx  &\leq  \int_{\RR^n} \Theta \, d\sigma|_{B_N(0)} +\beta \int_{\RR^n} |D v_N|^{p-1} \,  |\nabla \Theta| \, dx. \nonumber
\end{align*}

Thus, for each fixed $k>0$, the sequence $\{T^{+}_k(T^{+}_m(u_i)-T^{+}_m(v_N))\}_{m}$ is uniformly bounded in $W^{1,p}_{\rm loc}(\RR^n)$. Since 
$T^{+}_k(T^{+}_m(u_i)-T^{+}_m(v_N))\rightarrow T_k^{+}(u_i-v_N)$ a.e. as $m\rightarrow\infty$, we see that $T_k^{+}(u_i-v_N)\in W^{1,p}_{\rm loc}(\RR^n)$.
 
 We are now ready to complete the proof of the theorem. Since $T_k^{+}(u_i-v_N)=T^{+}_k(T^{+}_m(u_i)-T^{+}_m(v_N))$
 a.e. on the set $\{ u_i<m, v_N<m\}$ and the two functions belong to $W^{1,p}_{\rm loc}(\RR^n)$, by \eqref{graddetail} we have
 \begin{align*}
 \nabla T_k^{+}(u_i-v_N)=\nabla T^{+}_k(T^{+}_m(u_i)-T^{+}_m(v_N))=0
 \end{align*}
 a.e. on the set $\{ u_i<m, v_N<m\}$ for any $m>0$. Thus, $\nabla T_k^{+}(u_i-v_N)=0$ a.e. in $\RR^n$, which implies the existence of a constant $\kappa\geq 0$
 such that 
$$\max \{u_i-v_N, 0\}=\kappa$$
a.e. in the entire space $\RR^n$.
 Note that if $\kappa\not=0$, then $u_i=v_N+\kappa$ in $\RR^n$, which violates the condition at infinity, $\liminf_{|x|\rightarrow\infty} u_i(x)=0$. It follows that $\kappa=0$, which yields \eqref{uileqvn}, as desired. 
\end{proof}

The following version of the comparison principle in $\RR^n$ 
is an immediate consequence of Theorems \ref{comprin} and \ref{EandU}. 

\begin{corollary}\label{cor-1}	
	Let  $\sigma, \tilde{\sigma} \in \MM(\RR^n)$, where $\sigma \leq \tilde{\sigma}$ and $\sigma<< {\rm cap}_p$,  $1<p<n$.  
	Let  $u$  be an $\mathcal{A}$-superharmonic reachable solution of \eqref{Basic-PDE}, and $\tilde{u}$ 
	 any  	$\mathcal{A}$-superharmonic solution of \eqref{Basic-PDE} with datum $\tilde{\sigma}$
	in place of $\sigma$. Then $u \le \tilde{u}$ in $\RR^n$. 	
\end{corollary}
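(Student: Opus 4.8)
The plan is to read this off directly from Theorems~\ref{comprin} and \ref{EandU}, so the work is essentially bookkeeping; the only substantive point is to check that the finiteness condition \eqref{finiteness} for $\sigma$ is at our disposal.

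First I would record that \eqref{finiteness} holds automatically under the hypotheses. By assumption $u$ is an $\mathcal{A}$-superharmonic solution of \eqref{Basic-PDE} with datum $\sigma$; being $\mathcal{A}$-superharmonic, $u$ is lower semicontinuous and not identically $+\infty$, hence finite a.e.\ (indeed q.e.\ with respect to ${\rm cap}_p$). The lower Kilpel\"ainen--Mal\'y bound in \eqref{Wolffbound} then gives ${\bf W}_{1,p}\sigma(x)\le K\,u(x)<+\infty$ for a.e.\ $x\in\RR^n$, and by the characterization recalled after Theorem~\ref{miniexist} (equivalence of \eqref{finiteness} with ${\bf W}_{1,p}\sigma\not\equiv\infty$), this is precisely \eqref{finiteness}. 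In particular Theorem~\ref{miniexist} applies and produces the minimal $\mathcal{A}$-superharmonic solution $v$ of \eqref{Basic-PDE} with datum $\sigma$.

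Next, since in addition $\sigma\ll{\rm cap}_p$, the uniqueness part of Theorem~\ref{EandU} is available: every $\mathcal{A}$-superharmonic reachable solution of \eqref{Basic-PDE} coincides with the minimal solution. Applying this to the given reachable solution $u$ yields $u=v$, i.e.\ $u$ \emph{is} the minimal $\mathcal{A}$-superharmonic solution of \eqref{Basic-PDE}.

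Finally I would invoke the Comparison Principle, Theorem~\ref{comprin}, with the pair $\sigma\le\tilde\sigma$ and $\sigma\ll{\rm cap}_p$ (note $1<p<n$): since $v$ is the minimal solution with datum $\sigma$ and $\tilde u$ is, by hypothesis, an $\mathcal{A}$-superharmonic solution of \eqref{Basic-PDE} with datum $\tilde\sigma$, the theorem gives $u=v\le\tilde u$ in $\RR^n$, which is the assertion. As indicated, there is no real obstacle here; the only step meriting a moment's care is the derivation of \eqref{finiteness}, which is forced by the mere existence of the reachable solution $u$ together with the pointwise lower bound \eqref{Wolffbound}, and which is needed to legitimately call on Theorems~\ref{miniexist} and \ref{EandU}.
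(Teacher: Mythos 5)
Your argument is correct and is essentially the paper's own: the corollary is deduced exactly as stated there, by combining the uniqueness part of Theorem \ref{EandU} (a reachable solution with $\sigma\ll{\rm cap}_p$ is the minimal solution) with the comparison principle of Theorem \ref{comprin}. Your extra observation that \eqref{finiteness} follows from the existence of $u$ via the lower bound in \eqref{Wolffbound} is a sensible bit of due diligence, but it does not change the route.
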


For $\sigma\in \MM(\RR^n)$ such that that $\sigma<<{\rm cap}_p$, sometimes it is desirable to know when  an $\mathcal{A}$-superharmonic solution to \eqref{Basic-PDE} is also the $\mathcal{A}$-superharmonic reachable  solution to \eqref{Basic-PDE}, and hence also the minimal 
$\mathcal{A}$-superharmonic solution to \eqref{Basic-PDE}. The following theorem provides some sufficient conditions in terms of the weak integrability of the gradient of the solution, or in terms of the finiteness of the datum $\sigma$.

\begin{theorem} \label{us}
	Let  $\sigma\in \MM(\RR^n)$,  where $\sigma<< {\rm cap}_p$. Suppose that 
any one of the following conditions holds:

\noindent {\rm (i)}	$|D u| \in L^{\gamma,\infty}(\RR^n)$ for some $(p-1)n/(n-1)\leq  \gamma < p$, where $L^{\gamma,\infty}(\RR^n)$ is the weak $L^\gamma$ space in $\RR^n$;

\noindent {\rm (ii)} 	$|D u| \in L^{p}(\RR^n)$;
	
\noindent {\rm (iii)} $\sigma \in \MM_b(\RR^n)$. 

	Then any  $\mathcal{A}$-superharmonic solution $u$ to the equation \eqref{Basic-PDE}
coincides with the minimal  $\mathcal{A}$-superharmonic solution.
\end{theorem}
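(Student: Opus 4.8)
The plan is to show that under any one of (i), (ii), (iii), the solution $u$ is in fact a reachable solution, so that uniqueness (and coincidence with the minimal solution) follows from Theorem \ref{EandU}. The natural approximating sequence is $\sigma_i \defeq \sigma|_{B_i(0)}$, with $u_i$ the minimal $\mathcal{A}$-superharmonic solution to \eqref{Basic-PDE} with datum $\sigma_i$; these satisfy conditions (i) and (ii) of Definition \ref{reach} automatically. By Theorem \ref{comprin} the $u_i$ are nondecreasing in $i$ and $u_i \le u$ (since $\sigma_i \le \sigma$ and $u$ is a supersolution with datum $\sigma$, one applies the comparison with the minimal solution $u_i$), so $u_\infty \defeq \lim_i u_i$ exists, is $\mathcal{A}$-superharmonic by \cite[Theorem 1.17]{KM1}, has Riesz measure $\sigma$ by the weak continuity theorem of \cite{TW}, and satisfies $u_\infty \le u$ with $\liminf_{|x|\to\infty} u_\infty = 0$. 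The whole task reduces to proving the reverse inequality $u \le u_\infty$; equivalently, that the nonnegative $\mathcal{A}$-superharmonic function $w \defeq u - u_\infty$ (which, wherever it makes classical sense, is $\mathcal{A}$-harmonic-like and satisfies $\liminf_{|x|\to\infty} w \le \liminf_{|x|\to\infty} u = 0$ only in a weak sense) must vanish. The subtlety is that $u - u_\infty$ need not itself be superharmonic for nonlinear $\mathcal{A}$, so I would instead argue directly that the difference of Riesz measures being zero forces the solutions to coincide, using the same truncated-test-function machinery as in the uniqueness part of Theorem \ref{EandU}.

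Concretely: repeat verbatim the computation in the proof of Theorem \ref{EandU} with $v_N$ replaced by $u_i$ and with $u$ in the role of the ``larger'' solution. Using test functions $h_m(u)h_m(u_i)T_k^+(u-u_i)\Theta_R$ one gets $I - II \le 0$ because $\sigma_i \le \sigma$; the terms $A_m, B_m$ vanish as $m\to\infty$ exactly as before (these estimates only use \eqref{structure} and the local renormalized formulation, not any global control). The one genuinely new point is the boundary term $C_m$, estimated by
\[
|C_m| \le \frac{c}{R}\int_{A_R}\big[|Du|^{p-1} + |Du_i|^{p-1}\big]\,\min\{{\bf W}_{1,p}\sigma, k\}\,dx .
\]
In Theorem \ref{EandU} the datum was compactly supported, making the Wolff potential decay like $R^{(p-n)/(p-1)}$ on $A_R$; here $\sigma$ is only locally finite, so I must produce the smallness of $C_m$ from the extra hypothesis instead. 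Under (iii), $\sigma \in \MM_b$, so ${\bf W}_{1,p}\sigma(x) \to 0$ as $|x|\to\infty$ and $|Du|^{p-1}, |Du_i|^{p-1} \in L^1_{\mathrm{loc}}$ with the annular integrals controlled via Caccioppoli and the weak Harnack inequality (as in \eqref{C}) by a power of $R$ times $(\inf_{A_R} u)^{p-1} \lesssim ({\bf W}_{1,p}\sigma)^{p-1}$ near $A_R$, which tends to $0$; so $\limsup_{R} \limsup_m |C_m| = 0$. Under (i) or (ii), one instead bounds
\[
|C_m| \le \frac{ck}{R}\,\|\,|Du|\,\|_{L^\gamma(A_R)}\,|A_R|^{1-\frac{p-1}{\gamma}} + (\text{same for }u_i),
\]
and the global integrability $|Du|\in L^{\gamma,\infty}$ (resp. $L^p$) forces $\|\,|Du|\,\|_{L^\gamma(A_R)} \to 0$ along a sequence $R_j \to \infty$ while the volume factor $|A_R|^{1-(p-1)/\gamma} \sim R^{n(1-(p-1)/\gamma)}$ is dominated by the decay once one checks the exponent balance $\gamma \ge (p-1)n/(n-1)$ is exactly what makes $\frac{n}{\gamma}(\gamma - (p-1)) \le \ldots$ work out — this is why the stated range of $\gamma$ appears. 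I also need the companion bound $|Du_i| \in L^\gamma$ (resp. $L^p$) uniformly in $i$: this follows because $u_i \le u$ together with the global estimate \eqref{Wolffbound} and standard gradient estimates give $\|\,|Du_i|\,\|_{L^{\gamma,\infty}} \le C\|\,|Du|\,\|_{L^{\gamma,\infty}}$, or more simply from the fact that the Riesz measures $\sigma_i$ increase to $\sigma$ and the Kilpeläinen--Zhong / Mingione gradient bounds are monotone in the data.

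Once $C_m$ is shown to be negligible, Fatou's lemma gives
\[
\int_{\{0 < u - u_i < k\}}\big[\mathcal{A}(x,Du) - \mathcal{A}(x,Du_i)\big]\cdot(Du - Du_i)\,dx \le 0,
\]
hence $Du = Du_i$ a.e.\ on $\{u > u_i\}$ by strict monotonicity; then the argument of \eqref{graddetail} shows $T_k^+(u - u_i) \in W^{1,p}_{\mathrm{loc}}(\RR^n)$ with $\nabla T_k^+(u-u_i) = 0$ a.e., so $\max\{u - u_i, 0\} \equiv \kappa$ is constant, and $\kappa \ne 0$ would contradict $\liminf_{|x|\to\infty}(u - u_i) \le \liminf_{|x|\to\infty} u = 0$. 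Therefore $u \le u_i$ for every $i$?—no: more carefully, one lets $i \to \infty$ first, obtaining $u \le u_\infty$, hence $u = u_\infty$. Thus $u$ is reachable and, by the uniqueness half of Theorem \ref{EandU}, equals the minimal solution. The main obstacle is precisely the control of the annular boundary term $C_m$: in each of the three cases a different global ingredient (finiteness of mass and decay of the Wolff potential; weak-$L^\gamma$ integrability with the sharp exponent; $L^p$ energy) must be combined with Caccioppoli's inequality on $A_R$ to beat the growth of $|A_R|$, and getting the exponent bookkeeping right in case (i) is the delicate part.
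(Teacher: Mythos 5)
Your overall skeleton --- compare $u$ with the minimal/limit solution via the test functions $h_m(u)\,h_m(\cdot)\,T_k^+(\cdot)\,\Theta_R$ of Theorem \ref{EandU}, with all the difficulty concentrated in the annular term $C_m$ --- is the same as the paper's, but two of your steps do not hold as written. First, the comparison with $u_i$ at fixed $i$ has the wrong sign: testing the equation of $u$ (datum $\sigma$) and of $u_i$ (datum $\sigma_i\le\sigma$) against the nonnegative function $h_m(u)h_m(u_i)T_k^{+}(u-u_i)\Theta_R$ gives $I-II=\int(\cdots)\,d(\sigma-\sigma_i)\ge 0$, not $\le 0$, so the monotonicity integral over $\{0<u-u_i<k\}$ is not controlled by $A_m+B_m+C_m$; moreover the conclusion this argument would produce, $u\le u_i$, is false in general (one has $u_i\le u$ by Theorem \ref{comprin}). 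Your parenthetical ``let $i\to\infty$ first'' does not repair this, since there is no valid inequality at fixed $i$ to pass to the limit in; the correct fix is to run the whole computation against the minimal solution $v$ (equivalently your $u_\infty$), whose Riesz measure is also $\sigma$, so that $I-II\le 0$ holds trivially --- which is exactly what the paper does, making the reachability detour unnecessary.

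Second, and more decisively, your control of $C_m$ under (i) and (ii) fails: after discarding the factor $\min\{{\bf W}_{1,p}\sigma,k\}$ in favor of $k$, H\"older on $A_R$ gives at best $|C_m|\lesssim k\,R^{-1}\Vert Du\Vert_{L^{\gamma}(A_R)}^{p-1}R^{\,n(1-\frac{p-1}{\gamma})}$, and for $\gamma\ge (p-1)n/(n-1)$ the exponent $n\bigl(1-\frac{p-1}{\gamma}\bigr)-1$ is $\ge 0$ (zero only at the endpoint), so the power of $R$ grows and is not ``dominated by the decay'': the tail norm vanishes at an uncontrolled rate (and under the weak-$L^{\gamma}$ hypothesis it need not even be finite). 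The paper's proof keeps the Wolff-potential factor and uses the nontrivial global bound ${\bf W}_{1,p}\sigma\in L^{m,\infty}(\RR^n)$ with $m=\frac{n\gamma}{n-\gamma}$ (via $u\approx {\bf W}_{1,p}\sigma$ and \cite{Ph17}), plus H\"older in Lorentz spaces on $A_R$, to produce a strictly negative power of $R$; in case (ii) it pairs $|Du|^{p-1}\in L^{p/(p-1)}$ with ${\bf W}_{1,p}\sigma\in L^{np/(n-p)}$, so the annulus volume cancels exactly and both tail norms vanish. Likewise in case (iii): $\sigma\in\MM_b(\RR^n)$ with $\sigma<<{\rm cap}_p$ does not give the pointwise decay ${\bf W}_{1,p}\sigma(x)\to 0$ you invoke, and the crude Caccioppoli/weak-Harnack bound $R^{\,n-p}(\inf_{B_R}u)^{p-1}\lesssim\sigma(\RR^n)$ is merely bounded, not vanishing; the paper extracts the needed smallness via the interpolation $\min\{{\bf W}_{1,p}\sigma,k\}\le k^{1-\epsilon}({\bf W}_{1,p}\sigma)^{\epsilon}$ combined with the weak Harnack inequality and \cite[Lemma 3.1]{CV2}, which leaves the vanishing factor $(\inf_{B_R}u)^{\epsilon}$. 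The uniform-in-$i$ gradient bounds for $u_i$ you appeal to are also only asserted, but that issue disappears once you compare with $v$ directly.
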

\begin{proof}
Let $v$ be the minimal  $\mathcal{A}$-superharmonic solution of \eqref{Basic-PDE}. Our goal is to show that $u\leq v$ a.e.
Let $\Theta(x)=\Theta_R(x), R>0$, $T^{+}_k(t), k>0$, and $h_m(t), m>0$ be as in the proof of Theorem \ref{EandU}.
Then arguing as in the proof of Theorem \ref{EandU}, with $u$ in place of $u_i$ and $v$ in place of $v_N$, we have 

\begin{align*}
&\int_{\{ 0<u_i -v_N<k\}} \left[\mathcal{A}(x, D u)- \mathcal{A}(x, D v)\right]  \cdot (D u- D v) \,  h_m(u) \, h_m(v)\, \Theta \, dx\\
&\leq  A_m+B_m+C_m,
\end{align*}
where now 
$$A_m=- \int_{\RR^n} \left[\mathcal{A}(x, D u)- \mathcal{A}(x, D v)\right] \cdot  D u \, h_m'(u) \, T^{+}_k(u-v)  \, h_m(v) \, \Theta \, dx,$$
$$B_m=- \int_{\RR^n} \left[\mathcal{A}(x, D u)- \mathcal{A}(x, D v)\right]  \cdot D v \, h_m'(v) \, T^{+}_k(u-v) \, h_m(u) \, \Theta \, dx,$$
and 
$$C_m=- \int_{\RR^n} \left[\mathcal{A}(x, D u)- \mathcal{A}(x, D v)\right] \cdot \nabla \Theta \, T^{+}_k(u-v) \,  h_m(u) \, h_m(v) \, dx.$$

As in the proof of Theorem \ref{EandU}, we have
\begin{equation}\label{ABzero}
\lim_{m\rightarrow\infty}(|A_m|+ |B_m|)=0.
\end{equation} 

As for $C_m$, we have 
\begin{align*} 	
|C_m| \leq  \frac{c}{R} \int_{A_R} \left[ |D u|^{p-1} +  |D v|^{p-1} \right] \, \min\{{\bf W}_{1,p}\sigma ,k\} \, dx,
\end{align*}
where, as above, $A_R$ is the annulus  $$A_R=\{R/2 < |x| <R\}.$$

Suppose now that condition (i) holds. Then $|D u|^{p-1} \in L^{\frac{q}{q-1}, \infty}(\RR^n)$ for some $q\in (p,n]$. Set
$$m=\frac{n(p-1)q}{n(q-1)-(p-1)q}>0,$$
and note that $
{\bf W}_{1,p}\sigma \in L^{m, \infty}(\RR^n). 
$
A proof of this fact in the `sublinear' case  $(p-1)q/(q-1)\leq 1$ can be found in \cite{Ph17}. 

We have that either $m\leq  q$ or $m>q$. In the case $m\leq q$, for any $\epsilon>0$ we find
\begin{align*}
&\frac{1}{R}\int_{A_R} |D u|^{p-1}  \min\{ {\bf W}_{1,p}\sigma, k\} dx  \\
&\qquad \leq \frac{1}{R} \norm{|D u|^{p-1}}_{L^{\frac{q}{q-1},\infty}(A_R)} \norm{\min\{ {\bf W}_{1,p}\sigma, k\}}_{L^{q,1}(A_R)}\\
&\qquad \leq \frac{1}{R} \norm{|D u|^{p-1}}_{L^{\frac{q}{q-1},\infty}(\RR^n)} \norm{\min\{ {\bf W}_{1,p}\sigma, k\}}_{L^{q+\epsilon, \infty}(A_R)} |A_R|^{\frac{\epsilon}{q(q+\epsilon)}} \\
&\qquad \leq C k^{\frac{q+\epsilon -m}{q+\epsilon}}  \norm{|D u|^{p-1}}_{L^{\frac{q}{q-1},\infty}(\RR^n)} \norm{ {\bf W}_{1,p}\sigma}_{L^{m, \infty}(\RR^n)}^{\frac{m}{q+\epsilon}}  R^{\frac{ n \epsilon}{q(q+\epsilon)}-1}.
\end{align*}
Here we shall choose $\epsilon>0$ such that 
$$\frac{ n \epsilon}{q(q+\epsilon)}-1<0.$$

In the case $m> q$, we have

\begin{align}\label{ca2}
&\frac{1}{R}\int_{A_R} |D u|^{p-1}  \min\{ {\bf W}_{1,p}\sigma, k\} dx  \\
&\qquad \leq \frac{1}{R} \norm{|D u|^{p-1}}_{L^{\frac{q}{q-1},\infty}(A_R)} \norm{\min\{ {\bf W}_{1,p}\sigma, k\}}_{L^{q,1}(A_R)}\nonumber\\
&\qquad \leq \frac{1}{R} \norm{|D u|^{p-1}}_{L^{\frac{q}{q-1},\infty}(\RR^n)} \norm{ {\bf W}_{1,p}\sigma}_{L^{m, \infty}(\RR^n)} |A_R|^{\frac{m-q}{q m}}\nonumber\\
&\qquad \leq  C \norm{|D u|^{p-1}}_{L^{\frac{q}{q-1},\infty}(\RR^n)} \norm{ {\bf W}_{1,p}\sigma}_{L^{m, \infty}(\RR^n)} R^{\frac{(m-q)n}{q m}-1}.\nonumber
\end{align}

Note that, since $q < p$, 
$$\frac{(m-q)n}{mq}-1=\frac{n(p-1) - n(q-1)+ (p-1)q}{(p-1)q} -1 <0.$$

Hence, in both cases we  have, for any fixed $k>0$,
 \begin{align}\label{ulim}
 \lim_{R\rightarrow \infty}\frac{1}{R}\int_{A_R} |D u|^{p-1}  \min\{ {\bf W}_{1,p}\sigma, k\} dx  =0,
 \end{align}
 and likewise,
\begin{align}\label{vlim}
\lim_{R\rightarrow \infty}\frac{1}{R}\int_{A_R} |D v|^{p-1}  \min\{ {\bf W}_{1,p}\sigma, k\} dx  =0.
\end{align}

On the other hand,  suppose now that condition (ii) holds, i.e.,  $|D u| \in L^{p}(\RR^n)$. Then ${\bf W}_{1,p}\sigma \in L^{\frac{np}{n-p}}(\RR^n)$, and  as in \eqref{ca2} we have 
\begin{align*}
&\frac{1}{R}\int_{A_R} |D u|^{p-1}  \min\{ {\bf W}_{1,p}\sigma, k\} dx  \\
&\qquad \leq  C \norm{|D u|^{p-1}}_{L^{\frac{p}{p-1}}(A_R)} \norm{ {\bf W}_{1,p}\sigma}_{L^{\frac{np}{n-p}}(A_R)},
\end{align*}
and likewise for $v$. Thus \eqref{ulim} and \eqref{vlim} also hold under condition (ii).
 
Finally, suppose that ${\rm (iii)}$ holds. For any  $1<r<\frac{n}{n-1}$ and $\epsilon\in(0,1)$ such that $\ep \frac{r}{r-1}<\frac{n(p-1)}{n-p}$, we have 
 \begin{align*} 	
  & \frac{1}{R} \int_{A_R}  |D u|^{p-1}  \min\{{\bf W}_{1,p}\sigma ,k\} dx\\
  &\leq k^{1-\epsilon} \frac{1}{R} \left(\int_{B_R(0)} |Du|^{(p-1)r} dx \right)^{\frac{1}{r}} \left(\int_{B_R(0)} ({\bf W}_{1,p}\sigma)^{\frac{\epsilon\, r}{r-1}} dx \right)^{\frac{r-1}{r}}\\
  &\leq C k^{1-\epsilon} R^{\frac{n}{r}-1} \left(\frac{\inf_{B_R(0)} u}{R}\right)^{p-1} R^{n\frac{r-1}{r}} \left(\inf_{B_R(0)} u \right)^{\epsilon},  
 \end{align*}
 where we used the Caccioppoli inequality and the weak Harnack inequality in the last bound (see \cite[Theorem 7.46]{HKM}).  
 
 Hence,  using \cite[Lemma 3.1]{CV2} we get
  \begin{align} 	\label{decayfinitemeas}
  & \frac{1}{R} \int_{A_R}  |D u|^{p-1}  \min\{{\bf W}_{1,p}\sigma ,k\} dx\\
  &\leq C k^{1-\epsilon} R^{n-p} \left(\int_{R}^\infty \left(\frac{\sigma(B_t(0))}{t^{n-p}}\right)^{\frac{1}{p-1}} \frac{dt}{t}\right)^{p-1} \left(\inf_{B_R(0)} u \right)^{\epsilon}\nonumber\\
  & \leq C k^{1-\epsilon} \sigma(\RR^n) \left(\inf_{B_R(0)} u \right)^{\epsilon}.\nonumber
  \end{align}
 
 A similar inequality  holds for $v$ in place of $u$. Thus, we see that \eqref{ulim} and \eqref{vlim}  hold under condition (iii) as well.
 
   Now \eqref{ulim} and \eqref{vlim} yield that,  for any $k>0$, we have 
 \begin{align}\label{Czero}
 \lim_{R\rightarrow \infty} \limsup_{m\rightarrow\infty} |C_m| =0.
 \end{align}

Using \eqref{ABzero} and \eqref{Czero},  we deduce   
\begin{align*}
\int_{\{ 0<u -v<k\}} \left[\mathcal{A}(x, D u)- \mathcal{A}(x, D v)\right]  \cdot (D u- D v) dx \leq 0
\end{align*}
for any $k>0$. This implies  $Du=Dv$ a.e. on the set $\{u-v>0\}$ and, as in the proof of Theorem \ref{EandU}, in view of the condition at infinity, we deduce  $u\leq v$ a.e. as desired.
\end{proof}

%

We  now provide a criterion for reachability by requiring only the finiteness of the approximating measures $\sigma_i$.   

\begin{corollary}
	Let $u$ be an  $\mathcal{A}$-superharmonic solution of \eqref{Basic-PDE}, where $\sigma \in \MM(\RR^n)$, and $\sigma<< {\rm cap}_p$. Suppose that there exist two sequences $\{u_i\}$ and $\{\sigma_i\}$, $i=1,2, \dots$,  such that the following conditions hold:

	\noindent ${\rm (i)}$ each $\sigma_i \in \MM_b(\RR^n)$, and  $\sigma_i \leq  \sigma $; 
	
	\noindent ${\rm (ii)}$ each $u_i$ is 	an $\mathcal{A}$-superharmonic solution of \eqref{Basic-PDE} with datum $\sigma_i$ in place of $\sigma$;

	\noindent ${\rm (iii)}$ $u_i\rightarrow u$ a.e. in $\RR^n$.
	
	Then $u$ is an $\mathcal{A}$-superharmonic reachable solution of  
	\eqref{Basic-PDE}, and thus coincides with the minimal solution.
\end{corollary}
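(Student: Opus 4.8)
The plan is to realize $u$ as an $\mathcal{A}$-superharmonic reachable solution in the precise sense of Definition \ref{reach}, i.e.\ with \emph{compactly supported} approximating data; once this is done, Theorem \ref{EandU} immediately gives that $u$ coincides with the minimal solution. The only gap between the present hypotheses and Definition \ref{reach} is that the $\sigma_i$ are merely finite rather than compactly supported, so the strategy has three steps: (1) replace each $\sigma_i$ by its restrictions $\sigma_i|_{B_N(0)}$ to balls; (2) show the corresponding minimal solutions increase a.e.\ to $u_i$ as $N\to\infty$; (3) diagonalize in $i$ and $N$.

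For step (2), fix $i$. Since $\sigma_i\le\sigma$ we have $\sigma_i<<{\rm cap}_p$, and $\sigma_i$ satisfies \eqref{finiteness} because $\sigma$ does (a solution $u$ exists); as $\sigma_i\in\MM_b(\RR^n)$, Theorem \ref{us}{\rm (iii)} shows that the given $u_i$ coincides with the minimal $\mathcal{A}$-superharmonic solution of \eqref{Basic-PDE} with datum $\sigma_i$. Put $\sigma_i^N:=\sigma_i|_{B_N(0)}$, which is compactly supported with $\sigma_i^N\le\sigma$, so by Theorem \ref{miniexist} there is a minimal $\mathcal{A}$-superharmonic solution $v_i^N$ of \eqref{Basic-PDE} with datum $\sigma_i^N$. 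Since $\sigma_i^N\le\sigma_i^{N+1}\le\sigma_i$ and $\sigma_i^N<<{\rm cap}_p$, the comparison principle (Theorem \ref{comprin}) shows that $\{v_i^N\}_N$ is nondecreasing and $v_i^N\le u_i$; moreover $v_i^N\le K\,{\bf W}_{1,p}\sigma_i^N\le K\,{\bf W}_{1,p}\sigma<\infty$ a.e.\ by \eqref{Wolffbound}. Hence $v_i^N$ increases a.e.\ to some $\tilde v_i\le K\,{\bf W}_{1,p}\sigma$, which by \cite[Theorem 1.17]{KM1} is $\mathcal{A}$-superharmonic, satisfies $\liminf_{|x|\to\infty}\tilde v_i=0$ because of the Wolff potential bound, and whose Riesz measure equals $\sigma_i$ by the weak continuity theorem of \cite{TW} (the Riesz measures of $v_i^N$ are $\sigma_i^N\uparrow\sigma_i$). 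Thus $\tilde v_i$ solves \eqref{Basic-PDE} with datum $\sigma_i$, so by minimality $\tilde v_i\ge u_i$; together with $\tilde v_i\le u_i$ this gives $v_i^N\uparrow u_i$ a.e.

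For step (3), equip the space of (equivalence classes of) measurable functions on $\RR^n$ with a metric $d$ inducing local convergence in measure; a.e.\ convergence dominated by the a.e.-finite function $K\,{\bf W}_{1,p}\sigma$ implies $d$-convergence, so for each $i$ we may pick $N_i$ with $d(v_i^{N_i},u_i)<1/i$, and since $d(u_i,u)\to0$ we obtain $d(v_i^{N_i},u)\to0$; passing to a subsequence, $v_{i_k}^{N_{i_k}}\to u$ a.e.\ in $\RR^n$. Now $\tau_k:=\sigma_{i_k}|_{B_{N_{i_k}}(0)}$ is compactly supported with $\tau_k\le\sigma$, and $v_{i_k}^{N_{i_k}}$ is an $\mathcal{A}$-superharmonic solution of \eqref{Basic-PDE} with datum $\tau_k$. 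By Definition \ref{reach}, $u$ is an $\mathcal{A}$-superharmonic reachable solution, and Theorem \ref{EandU} concludes that it equals the minimal solution.

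The main point — and the only nonroutine step — is the identification $v_i^N\uparrow u_i$ in step (2): it crucially uses the finiteness hypothesis through Theorem \ref{us}{\rm (iii)} to recognize $u_i$ as the minimal solution, and then weak continuity of Riesz measures together with minimality to pin down the limit of the spatially truncated problems. The diagonal extraction in step (3) is standard soft analysis.
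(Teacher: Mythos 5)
Your proposal is correct and follows essentially the same route as the paper: the paper's own proof notes that by Theorem \ref{us} (condition (iii), since each $\sigma_i\in\MM_b(\RR^n)$ and $\sigma_i\le\sigma<<{\rm cap}_p$) each $u_i$ is the minimal, hence reachable, solution, and then concludes by a diagonal argument exactly as in your step (3). Your step (2) simply re-derives the reachability of $u_i$ explicitly (via Theorem \ref{miniexist}, Theorem \ref{comprin}, and weak continuity of Riesz measures) instead of quoting the construction in Theorem \ref{EandU}, which is a harmless expansion of the same idea.
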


\begin{proof} By Theorem \ref{us}, each $u_i$ is a reachable solution. Thus by a diagonal process argument, we see that $u$ is also a reachable solution.   
Alternatively, this can also be proved by modifying the proof of the uniqueness part in Theorem \ref{EandU}, taking into account   estimates of the form \eqref{decayfinitemeas}. 	
\end{proof}

Theorem \ref{us} formally holds under the condition $|D u| \in L^{\gamma,\infty}(\RR^n)$ for  $0< \gamma < (p-1)n/(n-1)$ as in this case $\sigma=0$. 
The proof of this fact, especially in the case $0<\gamma\leq p-1$, requires some  results obtained recently in \cite{HP4}. 

\begin{theorem}
If $u$ is an $\mathcal{A}$-superharmonic function in $\RR^n$ such that 	 $|D u| \in L^{\gamma,\infty}(\RR^n)$ for some $0< \gamma < (p-1)n/(n-1)$, then $\sigma =0$ where $\sigma$ is the Riesz measure of $u$.
\end{theorem}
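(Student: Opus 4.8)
The plan is to derive a contradiction from assuming $\sigma = \mu[u] \neq 0$ by combining the Wolff potential lower bound with the weak integrability of $|Du|$. First I would note that if $\sigma$ is a nontrivial nonnegative measure, then $\sigma(B_{R_0}(0)) > 0$ for some $R_0 > 0$, and hence for all $R \geq R_0$ we have a lower bound $\sigma(B_R(0)) \geq c_0 > 0$. The Kilpel\"ainen--Mal\'y estimate \eqref{Wolffbound} then gives $u(x) \geq \frac{1}{K}{\bf W}_{1,p}\sigma(x)$; estimating the Wolff potential from below by integrating only over radii $t \gtrsim |x|$ yields, for $|x|$ large,
\begin{equation*}
u(x) \geq \frac{1}{K}\int_{2|x|}^{\infty}\left(\frac{\sigma(B_t(x))}{t^{n-p}}\right)^{\frac{1}{p-1}}\frac{dt}{t} \geq c\, |x|^{\frac{p-n}{p-1}},
\end{equation*}
since $B_{R_0}(0) \subset B_t(x)$ once $t \geq 2|x| + R_0$, so $\sigma(B_t(x)) \geq c_0$ there. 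Thus $u$ has a definite polynomial lower decay rate.

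Next I would turn this pointwise lower bound into a lower bound on $\int_{A_R}|Du|^p\,dx$ over the annulus $A_R = \{R/2 < |x| < R\}$, which contradicts $|Du|\in L^{\gamma,\infty}$ for small $\gamma$. For this I would use a reverse-type estimate: testing the equation $-{\rm div}\,\mathcal{A}(x,Du) = \sigma$ against a suitable cutoff times $u$ (or using a Caccioppoli-type inequality for supersolutions bounded below on $A_R$ combined with the weak Harnack inequality as already invoked elsewhere in the paper), one gets
\begin{equation*}
\int_{A_R}|Du|^{p-1}\,dx \;\gtrsim\; R^{n-1}\,\inf_{A_{2R}} u \;\gtrsim\; R^{n-1}\,R^{\frac{p-n}{p-1}},
\end{equation*}
or more directly a lower bound showing $\|Du\|_{L^{p-1}(A_R)}$ cannot decay too fast. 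Here is where the recent results from \cite{HP4} enter: in the delicate range $0<\gamma\le p-1$ the gradient need not even be locally summable to the power $p-1$ in the classical sense, so one needs the refined gradient estimates / potential estimates from \cite{HP4} to make sense of and bound $\int_{A_R}|Du|^{p-1}$ from below in terms of the infimum of $u$ on a larger annulus. Combining the lower bound on $\inf_{A_{2R}}u$ with the hypothesis $|Du|\in L^{\gamma,\infty}(\RR^n)$ — which forces $\int_{A_R}|Du|^{p-1}\,dx$ (or the appropriate substitute) to tend to $0$ along a sequence $R\to\infty$, since $\gamma < (p-1)n/(n-1)$ makes the relevant scaling exponent negative — yields the contradiction.

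Concretely, the clean way to organize the contradiction is dimensional: $|Du|\in L^{\gamma,\infty}(\RR^n)$ implies $\int_{A_R}|Du|^{s}\,dx \lesssim R^{n - \frac{ns}{\gamma}}$ for any $s<\gamma$, and choosing $s = p-1$ (legitimate since $p-1 < \gamma \cdot \frac{n}{n-1}$... — in fact one works just below $\gamma$) gives decay $R^{n(1-\frac{p-1}{\gamma})}$ which, against the lower bound $\gtrsim R^{(n-1) + \frac{p-n}{p-1}} = R^{\frac{(n-1)(p-1) + p - n}{p-1}} = R^{\frac{n(p-2)+1}{p-1}}$, is impossible for large $R$ precisely when $\gamma < \frac{(p-1)n}{n-1}$; one checks the exponent inequality is equivalent to that threshold. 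I expect the main obstacle to be the sub-$(p-1)$-summability range $0<\gamma\le p-1$: there the naive "test against $u\,\Theta$" argument breaks because $Du$ may fail to be locally $(p-1)$-integrable, and one must instead invoke the pointwise/potential gradient estimates of \cite{HP4} to control the gradient from below on annuli (or, alternatively, argue via the truncations $u_k = \min\{u,k\}$ and a careful limiting procedure). Everything else — the Wolff potential lower bound, the weak Harnack and Caccioppoli inequalities, and the Lorentz-space decay computation — is routine and parallels the estimates already carried out in the proof of Theorem \ref{us}.
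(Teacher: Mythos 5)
Your overall strategy (playing a lower bound on $\int_{A_R}|Du|^{p-1}\,dx$ against the decay forced by $|Du|\in L^{\gamma,\infty}(\RR^n)$) is in the right spirit, but the step that is supposed to produce the lower bound is not correct, and the rest of the argument inherits the damage. There is no estimate of the form $\int_{A_R}|Du|^{p-1}\,dx\gtrsim R^{n-1}\inf_{A_{2R}}u$: Caccioppoli and the weak Harnack inequality bound the gradient \emph{above} in terms of the infimum of $u$, never below (adding a constant to $u$ changes $\inf u$ but not $Du$, and a constant function has $Du=0$). The correct substitute is simply the cutoff test estimate \eqref{sigmaBR}, which gives $\int_{Q_R(0)}|Du|^{p-1}\,dy\gtrsim R\,\sigma(Q_{R/2}(0))$, so if $\sigma\neq 0$ the lower bound grows like $R^{1}$; comparing with $\int_{Q_R}|Du|^{p-1}\lesssim \Vert Du\Vert_{L^{\gamma,\infty}}^{p-1}R^{\,n-n(p-1)/\gamma}$ (valid when $\gamma>p-1$) yields $\sigma=0$ exactly when $n-n(p-1)/\gamma<1$, i.e. $\gamma<(p-1)n/(n-1)$; this is the paper's argument, run directly rather than by contradiction, and with no Wolff potentials at all. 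With your claimed lower bound the exponent comparison instead gives the condition $\gamma<n(p-1)^2/(n-1)$, so your own consistency check (``the exponent inequality is equivalent to that threshold'') fails unless $p=2$ --- a symptom that the $\inf u$ route is not the right one. Note also that you invoke the global bilateral bound \eqref{Wolffbound}, which is available only for nonnegative solutions of \eqref{Basic-PDE} with $\liminf_{|x|\to\infty}u=0$; the theorem here concerns an arbitrary $\mathcal{A}$-superharmonic function, so that step is not justified either (and, as above, it is unnecessary).

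The genuinely delicate part of the statement is the range $0<\gamma\le p-1$, and there the issue is the opposite of what you describe: one must estimate $\int_{Q_R}|Du|^{p-1}$ from \emph{above} using only the weak-$L^{\gamma}$ norm, which is impossible directly because $|Du|^{p-1}$ then lies only in $L^{\gamma/(p-1),\infty}$ with exponent $\le 1$. The paper resolves this with a reverse H\"older self-improvement: the comparison estimates of \cite{HP4} (Lemma 2.3 combined with Corollary 1.3) give $\bigl(\fint_{Q_{\rho}}|Du|^{\gamma_1}\,dy\bigr)^{1/\gamma_1}\lesssim\bigl(\fint_{Q_{4\rho}}|Du|^{p-1}\,dy\bigr)^{1/(p-1)}$ for a fixed $\gamma_1\in\bigl(p-1,(p-1)n/(n-1)\bigr)$, after absorbing the measure term via \eqref{sigmaBR}; the covering/iteration argument of \cite[Remark 6.12]{Giu} then upgrades this to \eqref{reverseH} with an arbitrarily small exponent $\epsilon$ on the right, and only at that point can the hypothesis $|Du|\in L^{\gamma,\infty}$ with $\gamma\le p-1$ be inserted into the test-function bound to conclude $\sigma(Q_{R/2}(0))\to 0$. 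Your proposal cites \cite{HP4} but in the wrong direction (as a tool for lower bounds on the gradient in terms of $\inf u$), so the key mechanism for the sub-$(p-1)$ range is missing from the argument.
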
	

\begin{proof}
Let $Q_r(x)$, $r>0$,  denote the open cube $Q_r(x):= x+ (-r,r)^n$ with center $x\in\RR^n$ and side-length  $2r$. Using $\Phi\in C_0^\infty (Q_r(x))$, $\Phi\geq 0$, 
$\Phi=1$ on  $Q_{r/2}(x)$, and $|\nabla \Phi|\leq C/r$, as a test function we have 
\begin{align}\label{sigmaBR}
\sigma(Q_{r/2}(x)) \leq \frac{C}{r}\int_{Q_{r}(x)} |D u|^{p-1} dy.
\end{align}

Thus  if $\gamma\in (p-1, (p-1)n/(n-1))$, for any $R>0$ we use H\"older's inequality to get 
\begin{align*}
\sigma(Q_{R/2}(0)) \leq \frac{C}{R}\norm{D u}^{p-1}_{L^{\gamma,\infty}(\RR^n)} R^{n\frac{\gamma-p+1}{\gamma}}.
\end{align*}
Note that $n\frac{\gamma-p+1}{\gamma}<1$ and thus letting $R\rightarrow\infty$ we get $\sigma=0$.

We now consider the case $0<\gamma\leq p-1$. Let $\gamma_1$ be a fixed number in $(p-1), (p-1)n/(n-1))$.
By \cite[Lemma 2.3]{HP4}, for any cube $Q_\rho(x)\subset\RR^n$, we have
\begin{align*}
\left(\fint_{Q_{\rho}(x)} |D u|^{\gamma_1} dy\right)^{\frac{1}{\gamma_1}} & \lesssim \left[\frac{\sigma(Q_{3\rho/2}(x))}{\rho^{n-1}}\right]^{\frac{1}{p-1}}\\
& \qquad + \frac{1}{\rho} \inf_{q\in\RR}\left(\fint_{Q_{3\rho/2}(x)} |u-q|^{p-1} dy\right)^{\frac{1}{p-1}}.
\end{align*}

 On the other hand, by \cite[Corollary 1.3]{HP4} we find 
\begin{align*}
\frac{1}{\rho} \inf_{q\in\RR}\left(\fint_{Q_{3\rho/2}(x)} |u-q|^{p-1} dy\right)^{\frac{1}{p-1}} & \lesssim \left[\frac{\sigma(Q_{2\rho}(x))}{\rho^{n-1}}\right]^{\frac{1}{p-1}} \\
&\qquad + \left(\fint_{Q_{2 \rho}(x)} |D u|^{p-1} dy\right)^{\frac{1}{p-1}}.
\end{align*}
Note that \cite[Corollary 1.3]{HP4} is stated for $1<p<3/2$ but the argument there also works for all $1<p\leq n$  after taking into account the comparison estimates of 
\cite{Min, DM1, HP1}. 

Hence, it follows that 
 \begin{align*}
 \left(\fint_{Q_{\rho}(x)} |D u|^{\gamma_1} dy\right)^{\frac{1}{\gamma_1}} & \lesssim \left[\frac{\sigma(Q_{2\rho}(x))}{\rho^{n-1}}\right]^{\frac{1}{p-1}} + \left(\fint_{Q_{2 \rho}(x)} |D u|^{p-1} dy\right)^{\frac{1}{p-1}}\\
 &\lesssim \left(\fint_{Q_{4 \rho}(x)} |D u|^{p-1} dy\right)^{\frac{1}{p-1}},
 \end{align*}
where we used \eqref{sigmaBR} with $r=4 \rho$ in the last inequality. This allows us to employ 
a covering/iteration argument as in \cite[Remark 6.12]{Giu} to obtain  that
\begin{align}\label{reverseH}
\left(\fint_{Q_{\rho}(x)} |D u|^{\gamma_1} dy\right)^{\frac{1}{\gamma_1}}  \lesssim \left(\fint_{Q_{4 \rho}(x)} |D u|^{\epsilon} dy\right)^{\frac{1}{\epsilon}}
\end{align}
 for any $\epsilon>0$.
 
 Thus, if $0<\gamma<p-1$, in view of \eqref{sigmaBR}, \eqref{reverseH}, and H\"older's inequality,  we get 
  \begin{align*}
  \sigma(Q_{R/2}(0)) \leq \frac{C}{R}\norm{D u}^{\frac{p-1}{\gamma}}_{L^{\gamma,\infty}(\RR^n)} R^{n- n\frac{p-1}{\gamma}}\rightarrow 0,
  \end{align*}
 as $R\rightarrow\infty$. Hence, $\sigma=0$. The case $\gamma=p-1$ is treated similarly, starting with the inequality
  \begin{align*}
 \sigma(Q_{R/2}(0)) \leq \frac{C}{R} \left(\int_{Q_R(0)} |D u|^{(p-1)(1+\epsilon)} dx\right)^{\frac{1}{1+\epsilon}}  R^{\frac{n\epsilon}{1+\epsilon}}
 \end{align*}
 for a sufficiently small $\epsilon>0$.
\end{proof}

Due to the results of \cite{DM1, KuMi, DZ, HP3} (see also \cite{DM2, HP2}), under some additional regularity conditions on the nonlinearity $\mathcal{A}(x,\xi)$, one has 
\begin{equation*}
|Du(x)| \leq C \left[{\bf I}_1 \sigma(x)\right]^{\frac{1}{p-1}} \quad {\rm a.e.~} x\in\RR^n,
\end{equation*}
provided $u$ is an $\mathcal{A}$-superharmonic solution  to the equation \eqref{Basic-PDE}. This gradient estimate holds in particular  for $\mathcal{A}(x,\xi)=|\xi|^{p-2}\xi$,  i.e.,   the $p$-Laplacian $\Delta_p$, which yields the following corollary. 

\begin{corollary}
			Let  $\sigma \in \MM(\RR^n)$.  Suppose that 
		one of the following conditions holds:
			
		\noindent {\rm (i)} $\sigma<<{\rm cap}_p$ and	${\bf I}_1 \sigma \in L^{s,\infty}(\RR^n)$ for some $n/(n-1)<  s < p/(p-1)$. This holds in particular if 
		$\sigma =f\in L^{t,\infty}(\RR^n)$ for some $1<t<np/(np-n+p)$;
		
		\noindent {\rm (ii)} 	${\bf I}_1 \sigma \in L^{p/(p-1)}(\RR^n)$, i.e., $\sigma$ is of finite energy.

		Then any  $p$-superharmonic solution $u$ to the equation 
		\begin{equation*}
		\left\{ \begin{array}{ll}
		- \Delta_p u = \sigma, \quad u\geq 0  \quad \text{in } \RR^n, \\
		\displaystyle{\liminf_{|x|\rightarrow \infty}}\,  u = 0, 
		\end{array}
		\right.
		\end{equation*}
		coincides with the minimal  $p$-superharmonic solution.
\end{corollary}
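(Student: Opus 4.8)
The plan is to deduce the statement from Theorem~\ref{us} by means of the pointwise gradient bound
\[
|Du(x)| \le C\,\bigl[{\bf I}_1 \sigma(x)\bigr]^{\frac{1}{p-1}} \qquad \text{for a.e. } x\in\RR^n,
\]
recalled above for the $p$-Laplacian $\Delta_p$. A $p$-superharmonic solution of the displayed problem is $\mathcal{A}$-superharmonic with $\mathcal{A}(x,\xi)=|\xi|^{p-2}\xi$, so I only need, in each of the two cases, to verify that $\sigma<< {\rm cap}_p$ and that one of the gradient integrability hypotheses of Theorem~\ref{us} holds; the conclusion that $u$ coincides with the minimal $p$-superharmonic solution then follows at once.

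Under hypothesis (ii) the condition $\sigma<< {\rm cap}_p$ is automatic, since a nonnegative measure of finite energy (i.e.\ with ${\bf I}_1\sigma\in L^{p/(p-1)}(\RR^n)$) does not charge sets of zero $p$-capacity (see, e.g., \cite{AH}); moreover ${\bf I}_1\sigma\in L^{p/(p-1)}(\RR^n)$ gives $({\bf I}_1\sigma)^{1/(p-1)}\in L^{p}(\RR^n)$, whence $|Du|\in L^{p}(\RR^n)$ and Theorem~\ref{us}(ii) applies. Under hypothesis (i), $\sigma<< {\rm cap}_p$ is assumed, and the gradient bound turns ${\bf I}_1\sigma\in L^{s,\infty}(\RR^n)$ into $|Du|\in L^{\gamma,\infty}(\RR^n)$ with $\gamma:=s(p-1)$; since the constraint $\tfrac{n}{n-1}<s<\tfrac{p}{p-1}$ is exactly equivalent to $\tfrac{(p-1)n}{n-1}<\gamma<p$, Theorem~\ref{us}(i) applies.

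For the ``in particular'' clause of (i), let $\sigma=f\in L^{t,\infty}(\RR^n)$ with $1<t<\tfrac{np}{np-n+p}$; note that $1<t<n$ because $p<n$. Fractional integration in Lorentz spaces (the weak-type Hardy--Littlewood--Sobolev inequality) gives ${\bf I}_1\colon L^{t,\infty}(\RR^n)\to L^{t^{*},\infty}(\RR^n)$ with $\tfrac{1}{t^{*}}=\tfrac1t-\tfrac1n$, so ${\bf I}_1\sigma\in L^{t^{*},\infty}(\RR^n)$; an elementary computation shows $t^{*}>\tfrac{n}{n-1}\Leftrightarrow t>1$ and $t^{*}<\tfrac{p}{p-1}\Leftrightarrow t<\tfrac{np}{np-n+p}$, so $s=t^{*}$ is admissible and the previous case applies. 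Apart from tracking these exponents through the gradient estimate, the only ingredients are the absolute continuity of finite-energy measures with respect to ${\rm cap}_p$ and the mapping property of ${\bf I}_1$ on weak Lorentz spaces, both classical; I therefore do not foresee any real obstacle in this corollary.
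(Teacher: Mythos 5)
Your proposal is correct and is essentially the paper's own route: the corollary is stated there as an immediate consequence of the pointwise gradient bound $|Du|\le C\,[{\bf I}_1\sigma]^{1/(p-1)}$ for the $p$-Laplacian together with Theorem \ref{us}, which is exactly what you carry out, with the exponent bookkeeping ($\gamma=s(p-1)$, the weak-type Hardy--Littlewood--Sobolev mapping $L^{t,\infty}\to L^{t^*,\infty}$, and the absolute continuity of finite-energy measures and of $\sigma=f\,dx$ with respect to ${\rm cap}_p$) checked correctly.
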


Finally, we show that if the condition at infinity, $\displaystyle{\liminf_{|x|\rightarrow \infty}}\,  u = 0$ in \eqref{Basic-PDE}, is replaced with the stronger one
$\displaystyle{\lim_{|x|\rightarrow \infty}}\,  u = 0$, then all $\mathcal{A}$-superharmonic solutions are indeed reachable. 

\begin{theorem}\label{thm-lim} Suppose that  $u$ is an $\mathcal{A}$-superharmonic solution  of  the equation
	\begin{equation}\label{Basic-PDE2}
	\left\{ \begin{array}{ll}
	-{\rm div}\, \mathcal{A}(x,\nabla u) = \sigma, \quad u\geq 0  \quad \text{in } \RR^n, \\
	\displaystyle{\lim_{|x|\rightarrow \infty}}\,  u = 0, 
	\end{array}
	\right.
	\end{equation}
where $\sigma \in \MM(\RR^n)$, and 
$\sigma<<{\rm cap}_p$.  	Then $u$ is the unique $\mathcal{A}$-superharmonic solution of \eqref{Basic-PDE2}, which coincides with the minimal 
$\mathcal{A}$-superharmonic reachable solution of \eqref{Basic-PDE}.
\end{theorem}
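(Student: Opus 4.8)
### Proof strategy for Theorem \ref{thm-lim}

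\textbf{Plan.} The goal is to show that the single hypothesis $\lim_{|x|\to\infty} u(x)=0$ already forces $u$ to be the minimal $\mathcal{A}$-superharmonic reachable solution of \eqref{Basic-PDE}; uniqueness then follows from Theorem \ref{EandU}. By Theorem \ref{EandU} there is a minimal reachable solution $v$ with $v\le K\,{\bf W}_{1,p}\sigma<\infty$ a.e., and by Corollary \ref{cor-1} (comparison) we already have $v\le u$. So the entire content is the reverse inequality $u\le v$ a.e. The plan is to run the same truncation/test-function machine as in the uniqueness part of Theorem \ref{EandU} and in Theorem \ref{us}, with $u$ in place of $u_i$ and $v$ in place of $v_N$, reducing everything to showing that the boundary term $C_m$ — the one carrying the cutoff gradient $\nabla\Theta$ supported on the annulus $A_R=\{R/2<|x|<R\}$ — vanishes as $R\to\infty$, now using only the decay $u\to 0$ at infinity rather than any integrability of $|Du|$ or finiteness of $\sigma$.

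\textbf{Key steps, in order.} First, set up the test functions exactly as before: cutoff $\Theta=\Theta_R$, truncation $T_k^+$, and the Lipschitz cutoffs $h_m$; use $h_m(u)h_m(v)T_k^+(u-v)\Theta$ as a test function in the local renormalized formulations of both equations (valid since $\mathcal{A}$-superharmonic functions are local renormalized solutions, \cite{BV,KKT}), subtract, and use $\sigma\le\sigma$ trivially so that the analogue of \eqref{I-II} reads $I-II\le 0$ (here since the data on both sides is $\sigma$, in fact $I-II$ has no sign issue and equals $0$, which is even better). Second, expand $I-II$ into the four integrals as in Theorem \ref{EandU}; the ellipticity term is $\ge$ the integral over $\{0<u-v<k\}$ of $[\mathcal{A}(x,Du)-\mathcal{A}(x,Dv)]\cdot(Du-Dv)\,h_m(u)h_m(v)\Theta$, and the $h_m'$-terms $A_m,B_m$ tend to $0$ as $m\to\infty$ by the very same argument (using $T^+_{2m}(u)\Theta$ as a test function, $T^+_{2m}(u)/m\le 2$ and $\to 0$ q.e., dominated convergence). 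Third — the crucial step — estimate $C_m$: using $|h_m|\le 1$, $|\nabla\Theta|\le C/R$, and $|T_k^+(u-v)|\le\min\{K\,{\bf W}_{1,p}\sigma,k\}\le\min\{Ku,k\}$ together with $u\le K\,{\bf W}_{1,p}\sigma$ and $v\le K\,{\bf W}_{1,p}\sigma$, bound
$$
|C_m|\le \frac{c}{R}\int_{A_R}\bigl[|Du|^{p-1}+|Dv|^{p-1}\bigr]\,\min\{K u,k\}\,dx.
$$
Since $u(x)\to 0$ as $|x|\to\infty$, for any $\eta>0$ there is $R_\eta$ with $\min\{Ku,k\}\le\eta$ on $A_R$ for $R>R_\eta$; combining this with the Caccioppoli inequality $\int_{A_R}|Du|^{p-1}dx\lesssim R^{n-p+1}(\inf_{A_R}u)^{p-1}$ (via Hölder and the weak Harnack inequality, \cite[Theorem 7.46]{HKM}), and again $\inf_{A_R}u\to 0$, one gets $\limsup_{R\to\infty}\limsup_{m\to\infty}|C_m|=0$. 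Fourth, pass to the limit $m\to\infty$, then $R\to\infty$ (Fatou), then $k\to\infty$, to conclude
$$
\int_{\{u-v>0\}}\bigl[\mathcal{A}(x,Du)-\mathcal{A}(x,Dv)\bigr]\cdot(Du-Dv)\,dx\le 0,
$$
hence $Du=Dv$ a.e. on $\{u>v\}$; then the $W^{1,p}_{\rm loc}$-regularity of $T_k^+(u-v)$ (established exactly as in Theorem \ref{EandU} via the identity \eqref{graddetail} and a uniform $W^{1,p}_{\rm loc}$ bound on $T_k^+(T_m^+(u)-T_m^+(v))$) gives $\nabla T_k^+(u-v)=0$ a.e., so $\max\{u-v,0\}=\kappa$ is constant; the condition $\lim_{|x|\to\infty}u=0$ (which also gives $\liminf v=0$) forces $\kappa=0$, i.e. $u\le v$. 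Therefore $u=v$, so $u$ is the minimal reachable solution, and uniqueness follows from Theorem \ref{EandU}.

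\textbf{Main obstacle.} The one genuinely new ingredient compared with Theorems \ref{EandU} and \ref{us} is the treatment of $C_m$: there we could no longer exploit decay of a cutoff Wolff potential in $L^{m,\infty}$, nor finiteness of $\sigma$, so we must instead squeeze decay out of $\min\{Ku,k\}$ on the annulus. The point to get right is the interplay between the $R$-independent factor $k$ inside the truncation and the genuinely decaying factor $u$: one wants to bound $\min\{Ku,k\}$ by $\eta$ on $A_R$ for $R$ large (pure pointwise decay of $u$, no integrability needed), and then the remaining $\frac1R\int_{A_R}|Du|^{p-1}$ must be controlled by $R^{n-p}(\inf_{A_R}u)^{p-1}$ which also tends to $0$; so in fact either factor alone suffices and the estimate is robust. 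The only mild subtlety is ensuring the test functions are admissible in the local renormalized formulation and that the $m\to\infty$ limits in $A_m,B_m$ go through verbatim — but these are identical to the arguments already carried out above, so no new difficulty arises there.
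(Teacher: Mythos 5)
There is a genuine gap at the step you yourself single out as the crucial one: the annulus term $C_m$. Your bound is $|C_m|\le \frac{c}{R}\int_{A_R}\bigl[|Du|^{p-1}+|Dv|^{p-1}\bigr]\min\{Ku,k\}\,dx$, and you propose to kill it by using only the uniform decay $\sup_{A_R}u\to 0$ together with $\frac{1}{R}\int_{A_R}|Du|^{p-1}dx\lesssim R^{n-p}\bigl(\inf_{A_R}u\bigr)^{p-1}$. But for a general locally finite $\sigma$ satisfying only \eqref{finiteness}, the factor $R^{n-p}(\inf_{A_R}u)^{p-1}$ is never small: by the lower Wolff bound in \eqref{Wolffbound}, $u(x)\gtrsim \bigl(\sigma(B_1(0))/|x|^{n-p}\bigr)^{1/(p-1)}$ for large $|x|$, so $R^{n-p}(\inf_{A_R}u)^{p-1}\gtrsim \sigma(B_1(0))>0$; thus your fallback claim that ``either factor alone suffices'' is false. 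Worse, the product of the two factors need not vanish either: take $d\sigma=|x|^{-p}(1+\log_{+}|x|)^{-\alpha}dx$ with $\alpha>p-1$. Then \eqref{finiteness} holds, $\sigma<<{\rm cap}_p$, and $u\approx{\bf W}_{1,p}\sigma\approx(\log|x|)^{-(\alpha-p+1)/(p-1)}\to 0$, so the hypotheses of the theorem are met; yet $\frac{1}{R}\int_{A_R}|Du|^{p-1}\min\{Ku,k\}\,dx$ grows like $R^{n-p}$ up to logarithmic corrections, hence diverges. The reason the same estimate worked in Theorem \ref{EandU} is that there the data $\sigma_i$ and $\sigma|_{B_N(0)}$ were compactly supported, so on $A_R$ one has the genuine polynomial decay ${\bf W}_{1,p}(\sigma|_{B_N(0)})\approx R^{(p-n)/(p-1)}$, which exactly offsets the $R^{n-p}$ growth; under the mere qualitative hypothesis $\lim_{|x|\to\infty}u=0$ no quantitative rate is available, and the cutoff-at-infinity argument cannot close (this is precisely why Theorem \ref{us} needed extra hypotheses such as $\sigma\in\MM_b(\RR^n)$ or global gradient integrability).

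The paper's proof of Theorem \ref{thm-lim} avoids any cutoff at spatial infinity altogether: since $\lim_{|x|\to\infty}u=0$, each superlevel set $\Omega_\ep=\{u>\ep\}$ is a \emph{bounded} open set, and one shows that $u_\ep=\max\{u,\ep\}-\ep$ is a renormalized solution of the Dirichlet problem $-{\rm div}\,\mathcal{A}(x,\nabla u_\ep)=\sigma$ in $\Omega_\ep$ with zero boundary data (using $T_k(u_\ep)=T_{k+\ep}(u)-\ep\in W^{1,p}_0(\Omega_\ep)$ and the local renormalized formulation of $u$). The corrected comparison principle, Lemma \ref{CP-corrected}, applied in $\Omega_\ep$ with $v$ the minimal solution (which satisfies $\min\{v,k\}\in W^{1,p}(\Omega_\ep)$ since $\Omega_\ep$ is bounded), then gives $u_\ep\le v$ in $\Omega_\ep$, hence $u\le v+\ep$ in $\RR^n$, and letting $\ep\to 0^+$ yields $u\le v$; the reverse inequality is minimality. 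If you want to keep your global test-function strategy, you would need to supply a genuinely new mechanism for the boundary term at infinity (for instance exploiting cancellation rather than absolute values), since the estimate as written fails; otherwise the localization to $\{u>\ep\}$ is the natural way to convert the hypothesis $\lim_{|x|\to\infty}u=0$ into a usable boundary condition.
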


\begin{proof} First notice that the condition  $\displaystyle{\lim_{|x|\rightarrow \infty}}\,  u = 0$ in \eqref{Basic-PDE2} yields, in view of  \eqref{Wolffbound},  
	$$\lim_{|x|\rightarrow\infty} {\bf W}_{1,p}\sigma (x)=0.$$
	
	For any $\epsilon>0$, let 
	$$\Omega_\ep:=\{x\in\RR^n: u(x)>\ep\},$$
	and 
	$$u_\epsilon:= \max\{u, \epsilon\}-\ep. $$
	
	Clearly, $\Om_\ep$ is a bounded open set,  $u_\ep=u-\epsilon$ on $\Omega_\ep$, and $u_\ep= 0$ in $\RR^n\setminus \Om_\ep$.
	
	Let $v$ be the minimal solution of \eqref{Basic-PDE}, which is also the minimal solution of \eqref{Basic-PDE2}, since $v \le K \, {\bf W}_{1,p}\sigma$, and hence $\displaystyle{\lim_{|x|\rightarrow \infty}}\,  v = 0$. It is enough to show that 
	\begin{equation}\label{uepv}
	u_\ep\leq v 
	\end{equation}
in $\Omega_\ep$, as this will yield that $u\leq v$ in $\RR^n$ after letting $\ep\rightarrow 0^{+}$.

Now by Lemma \ref{CP-corrected} below, to verify \eqref{uepv}, it suffices to show that $u_\ep$
is a renormalized solution of 
\begin{equation*}
\left\{ \begin{array}{ll}
-{\rm div}\, \mathcal{A}(x,\nabla u_\ep) = \sigma   \quad \text{in } \Omega_\ep, \\
u_\ep = 0 \quad \text{on } \partial\Om_\ep. 
\end{array}
\right.
\end{equation*}

Note that, for any $k>0$, $T_k(u_\ep)=T_{k+\epsilon}(u)-\ep$. We have $T_k(u_\ep)\in W^{1,p}_{\rm loc} (\RR^n)$, $T_k(u_\ep)$ is quasi-continuous in $\RR^n$, and $T_k(u_\ep)=0$ everywhere in $\RR^n\setminus\Om_\ep$. Thus $T_k(u_\ep)\in W^{1,p}_0(\Om_\ep)$ (see \cite[Theorem 4.5]{HKM}).

As $u$ is a local renormalized solution in $\RR^n$, 
for every $k>0$ there exists a nonnegative measure $\lambda_{k+\ep}<<{\rm cap}_p$, concentrated on the sets $\{u=k+\ep\}$,  such that
$\lambda_{k+\ep}\rightarrow 0$ weakly  as measures in $\RR^n$ as $k\rightarrow\infty$. Since $\Om_\ep$ is bounded, this implies that $\lambda_{k+\ep}\rightarrow 0$  in the narrow topology of measures in $\Omega_\ep$.

Moreover, for $k>0$,
\begin{equation*}
\int_{\{u<k+\ep\}}\mathcal{A}(x, Du)\cdot\nabla\varphi dx= \int_{\{u<k+\ep\}}\varphi
d\sigma_{0} +  \int_{\RR^n}\varphi d\lambda_{k+\ep}, 
\end{equation*}
for every $\varphi\in{\rm W}_{0}^{1,\,p}(\RR^n)\cap L^{\infty}(\RR^n)$ with compact support in $\RR^n$.
In particular, we have 
\begin{equation*}
\int_{\{u_\ep<k\}\cap \Om_\ep}\mathcal{A}(x, Du)\cdot\nabla\varphi dx= \int_{\{u_\ep<k\}\cap \Om_\ep}\varphi
d\sigma_{0} +  \int_{\{u_\ep=k\}\cap \Om_\ep}\varphi d\lambda_{k+\ep} 
\end{equation*}
for every $\varphi\in{\rm W}_{0}^{1,\,p}(\Om_\ep)\cap L^{\infty}(\Om_\ep)$.

Thus, we conclude that $u_\ep$ is a renormalized solution in $\Om_\ep$, as desired.
\end{proof}	


\section{Quasilinear equations with a sub-natural growth term in $\RR^n$}
\label{sub-natural}

In this section, we study solutions to the equation 
\begin{equation}\label{sub-quasi-pde-a}
	\left\{ \begin{array}{ll}
	-{\rm div}\, \mathcal{A}(x,\nabla u) = \sigma u^q + \mu, \quad u\ge 0   \quad \text{in } \RR^n, \\
	\displaystyle{\liminf_{|x|\rightarrow \infty}}\,  u = 0, 
	\end{array}
	\right.
	\end{equation}
	in the sub-natural growth  case $0<q<p-1$, with $\mu, \sigma \in \MM(\RR^n)$. 
	
	We consider nontrivial $\mathcal{A}$-superharmonic solutions to \eqref{sub-quasi-pde-a} 
	such that $0<u<\infty$ $d \sigma$-a.e., which implies $u \in L^q_{{\rm loc}} (\RR^n, \sigma)$, so that 
	 $\sigma u^q + \mu \in \MM(\RR^n)$ (see \cite{Ver1}). 
		
	As was noted in the Introduction,  $\sigma << {\rm cap}_p$ whenever there exists a nontrivial 
	solution $u$ to \eqref{sub-quasi-pde-a}, for any $\mu$ (in particular, $\mu=0$). 
	
	The existence and uniqueness of nontrivial \textit{reachable} $\mathcal{A}$-superharmonic solutions to \eqref{sub-quasi-pde-a}, under the additional assumption $\mu<< {\rm cap}_p$,  are proved below. 
	Without this restriction on $\mu$, the existence of nontrivial solutions, not necessarily reachable,  was obtained recently in \cite{Ver1}, along with bilateral pointwise estimates of solutions in terms of nonlinear potentials.

	We use this opportunity to make a correction in the proof of the existence 
	property for \eqref{sub-quasi-pde-a} in the case $\mu=0$ given in \cite[Theorem 1.1]{CV2}, which used a version of the comparison principle (\cite{CV2}, Lemma 5.2).  
	It  was invoked in the proof of \cite[Theorem 1.1]{Ver1} as well. 	Some inaccuracies 
	in the statement of this comparison principle and its proof are fixed in the following lemma. 
	The rest of the proofs of \cite[Theorem 1.1]{CV2} and \cite[Theorem 1.1]{Ver1} remains valid with this correction. (See the proof of Theorem \ref{exist} below.) 

\begin{lemma}\label{CP-corrected}
Let $\Om$ be a bounded open set in $\RR^n$. Suppose that $\mu, \nu  \in \MM_b(\Om)$, where $\mu\leq \nu$ and  $\mu<<{\rm cap}_p$. If $u\ge 0$ is a renormalized solution of 
\begin{equation}\label{muequ}
\left\{ \begin{array}{ll}
-{\rm div}\, \mathcal{A}(x,\nabla u) = \mu   \quad \text{in } \, \Omega, \\
u = 0 \quad \text{on } \, \partial\Om, 
\end{array}
\right.
\end{equation}
and if $v\ge 0$ is an $\mathcal{A}$-superharmonic function in $\Om$ with Riesz measure $\nu$ such that $\min\{v,k\}\in W^{1,p}(\Om)$ for any $k>0$, then 
$u\leq v$ a.e. 
\end{lemma}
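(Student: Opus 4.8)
The plan is to prove the comparison $u \le v$ a.e. in $\Omega$ by a truncated energy argument, using the renormalized formulation for $u$ and the local renormalized structure of the $\mathcal{A}$-superharmonic function $v$. First I would record the basic objects: since $v$ is $\mathcal{A}$-superharmonic with Riesz measure $\nu$ and $T_k(v)=\min\{v,k\}\in W^{1,p}(\Omega)$ for all $k$, the truncations $T_j(v)$ have Riesz measures $\nu_j$ admitting the Dal Maso--Murat--Orsina--Prignet / Bidaut-V\'eron decomposition $\nu_j = \nu_0|_{\{v<j\}} + \alpha_j$ with $\nu_0 \le \nu$ the part absolutely continuous with respect to $\mathrm{cap}_p$ and $\alpha_j$ concentrated on $\{v=j\}$; in particular $\nu_j \ge \nu_0|_{\{v<j\}}$. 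Because $\mu \le \nu$ and $\mu \ll \mathrm{cap}_p$ we have $\mu = \mu_0 \le \nu_0$ as measures, so for $j$ large the equation for $u$ (with datum $\mu$) is "smaller" than the equation for $T_j(v)$ restricted to $\{v<j\}$. The goal is to test the difference of the two equations with a function of the form $h_m(u)\,h_m(v)\,T_k^{+}(u-v)$, mirroring the test-function choices $T_k^{+}$ and $h_m$ used in the proof of Theorem~\ref{EandU}, but now everything lives in the \emph{bounded} domain $\Omega$, so no cutoff $\Theta$ and no decay-at-infinity argument is needed, which simplifies matters considerably.

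Concretely, I would fix $k>0$ and $m>0$, use $\varphi = h_m(u)\,h_m(v)\,T_k^{+}(u-v)$ — which lies in $W_0^{1,p}(\Omega)\cap L^\infty(\Omega)$ since $T_k^+(u-v)$ vanishes where $u=0$ and is bounded, and $h_m$ has compact support — as a test function in the renormalized equation for $u$ and in the weak/renormalized equation for $T_j(v)$ with $j = j(m)$ chosen $> 2m$ (so that on the support of $h_m(v)$ one has $v<j$, hence $DT_j(v)=Dv$ there and the datum is exactly $\nu_0|_{\{v<j\}} \ge \mu_0 = \mu$ on that set). Subtracting, the $\mu$-term minus the $\nu$-term is $\le 0$ by $\mu \le \nu_0|_{\{v<j\}}$ on the relevant set. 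Expanding the left side by the product rule splits it into the "good" term $\int_{\{0<u-v<k\}} [\mathcal{A}(x,Du)-\mathcal{A}(x,Dv)]\cdot(Du-Dv)\,h_m(u)h_m(v)\,dx \ge 0$ plus three remainder terms carrying $h_m'(u)$, $h_m'(v)$, and (absent now) a gradient-of-cutoff term. The two remainder terms with $h_m'$ are $O(1/m)\int_{\{m<u<2m\}\cup\{m<v<2m\}}(|Du|^p+|Dv|^p)\,dx$, and exactly as in Theorem~\ref{EandU} — using $T_{2m}^+(u)$ and $T_{2m}^+(v)$ as test functions, $T_{2m}^+(\cdot)/m \le 2$, and $T_{2m}^+(\cdot)/m \to 0$ quasi-everywhere (the latter relying on $\mu \ll \mathrm{cap}_p$, and on the fact that $v$'s truncation energies are controlled) — these tend to $0$ as $m \to \infty$. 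Letting $m\to\infty$, then $k\to\infty$, and invoking the strict monotonicity in \eqref{structure} forces $Du=Dv$ a.e. on $\{u>v\}$; since $u=0$ on $\partial\Omega$ in the renormalized sense and $\Omega$ is bounded and connected-component-wise one can conclude $(u-v)^+ \equiv 0$, i.e. $u\le v$ a.e. (One handles the possibly disconnected $\Omega$ componentwise; on each component $T_k^+(u-v)$ is constant with zero trace, hence zero.)

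The main obstacle — and the point where the original \cite[Lemma~5.2]{CV2} had its inaccuracy — is the careful bookkeeping of \emph{which} measure sits on the right-hand side of the truncated equation for $v$ and on which set, together with ensuring that the chosen test function is genuinely admissible in \emph{both} the renormalized formulation for $u$ (Definition~\ref{rns1}/\ref{defrs}) and in the equation satisfied by $T_j(v)$. One must verify that $h_m(u)h_m(v)T_k^+(u-v) \in W_0^{1,p}(\Omega)\cap L^\infty$, that $h_m(u)$ and $h_m(v)$ — functions of $u$ and $v$ respectively, not of a single unknown — still produce legitimate test objects after the product rule (this is where writing $v$ via its truncations $T_j(v)$, which genuinely lie in $W^{1,p}(\Omega)$, is essential), and that the singular parts $\alpha_j$ of $\nu_j$ contribute nonnegatively (they appear with the factor $h_m(v)T_k^+(u-v)$ which is $\ge 0$) so that dropping or keeping them only helps the inequality. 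Once the admissibility and the measure-comparison on $\{v<j\}$ are pinned down, the remaining estimates are routine and parallel to those already carried out in the proof of Theorem~\ref{EandU}.
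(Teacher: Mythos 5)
Your overall strategy is genuinely different from the paper's (the paper never tests the two equations against each other: it solves auxiliary variational problems $-{\rm div}\,\mathcal{A}(x,\nabla u_j)=\mu|_{\{v<j\}}$ with $u_j\in W^{1,p}_0(\Om)$, compares $u_j\le\min\{v,j\}$ by the classical $W^{1,p}$ comparison principle of \cite[Lemma 5.1]{CV2}, shows the increasing limit $\tilde u=\lim u_j$ is a renormalized solution with datum $\mu$, and concludes $\tilde u=u$ from uniqueness of renormalized solutions for diffuse measures). That route is deliberately chosen to avoid exactly the point where your argument has a gap.

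The gap is in the claim that the remainder terms carrying $h_m'$ vanish ``exactly as in Theorem \ref{EandU}.'' In that proof, the bounds $\frac1m\int_{\{0<v_N<2m\}}|Dv_N|^p\,\Theta\,dx\to0$ were obtained by testing the equation of $v_N$ with $T^{+}_{2m}(v_N)\,\Theta$, which is admissible precisely because of the compactly supported cutoff $\Theta$ in $\RR^n$. In the present lemma $v$ satisfies \emph{no} boundary condition: only $\min\{v,k\}\in W^{1,p}(\Om)$, not $W^{1,p}_0(\Om)$, and the Riesz measures $\nu_j$ of the truncations pair only with $W^{1,p}_0(\Om)$ test functions. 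So you cannot test $v$'s equation with $T^{+}_{2m}(v)$-type functions over all of $\Om$, and there is no a priori reason that $\frac1m\int_{\{m<v<2m\}}|Dv|^p\,dx\to0$; indeed for an $\mathcal{A}$-harmonic $v\ge0$ of Poisson-kernel type blowing up at a boundary point (so $\nu=0$, all hypotheses on $v$ hold) this quantity tends to infinity. Consequently the cross terms you must control, of the form $\frac{k}{m}\int_{\{m<u<2m\}\cap\{v<2m\}}|Dv|^p\,dx$ in $A_m$ and $\frac{k}{m}\int_{\{m<v<2m\}\cap\{m<u<2m\}}|Dv|^p\,dx$ in $B_m$, are not reachable by either equation: the renormalized structure of $u$ (with $\mu\ll{\rm cap}_p$) controls $\frac1m\int_{\{m<u<2m\}}|Du|^p$, but testing either equation with functions of $u$ produces pairings $\mathcal{A}(x,Dv)\cdot Du$, which are not coercive in $Dv$, and the localization of the bad set to $\{u>m\}$ by the factors $h_m(u)T_k^{+}(u-v)$ is not, as written, converted into an estimate. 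In short, discarding the cutoff is not a simplification: the cutoff (together with decay at infinity) was the mechanism that made these terms small, and in the bounded domain with $v$ unconstrained at $\partial\Om$ you have supplied no replacement. The setup of your first paragraph (the decomposition $\nu_j=\nu|_{\{v<j\}}+\alpha_j$, $\mu\le\nu_0$, positivity of the $\alpha_j$ contribution, and admissibility of the test function via the $h_m$ factors and domination by $T_k(u)$) is fine, and the final step would go through once $Du=Dv$ on $\{u>v\}$ is known; but the core limit $m\to\infty$ is not justified, so the proof as proposed does not close.
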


\begin{proof} Let $\nu_j$, $j>0$, be the Riesz measure of $\min\{v,j\}$. Since $\min\{v,j\}\in W^{1,p}(\Om)$ we see that $\nu_j$ belongs to  the dual of $W^{1, p}_0(\Om)$ (see \cite[Theorem 21.6]{HKM}).
As in \eqref{truncatemeasure}, we have 
\begin{equation*}
\nu_j=\nu|_{\{v<j\}} + \alpha_j 
\end{equation*}
for a measure $\alpha_j\in \MM_b(\Om)$ concentrated in the set $\{v=j\}$. Thus the measure $\mu_j:=\mu|_{\{v<j\}}\leq \nu|_{\{v<j\}} \leq \nu_j$ for any $j>0$.	
This implies that $\mu_j$ also belongs to  the dual of $W^{1,p}_0(\Om)$, and hence there  exists a unique solution $u_j$  to  the equation
\begin{equation}\label{ujequa}
-{\rm div}\, {\mathcal A}(x, \nabla u_j)=\mu_j, \qquad u_j\in W^{1,p}_0(\Om).
\end{equation}
Then by the comparison principle (see \cite[Lemma 5.1]{CV2}) we find
$$0\leq u_1\leq u_2 \leq \dots \leq u_j\leq \min\{v,j\}$$
for any integer $j>0$.  Thus there is a function $\tilde{u}$ on $\Om$ such that $0\leq \tilde{u}\leq v$ a.e. and $u_j\rightarrow \tilde{u}$ as $j\rightarrow \infty$. We now claim that $\tilde{u}$
is also a renormalized solution to  equation \eqref{muequ}. If this is verified then, as $\mu<<{\rm cap}_p$, we must have that $\tilde{u}=u$ a.e. (see \cite{MP, DMOP}) and thus $u\leq v$ a.e. as desired. 

To show that $\tilde{u}$
is  the renormalized solution of \eqref{muequ},  we first use $T_k^{+}(u_j)$, $k>0$, as a test function for \eqref{ujequa} to obtain 
\begin{equation}\label{uboundp}
\alpha \int_{\Om} |\nabla T_k^{+}(u_j)|^p dx \leq k\mu_j(\Om)\leq k\mu(\Om).
\end{equation}

Since $T_k^{+}(u_j)\rightarrow T_k^{+}(\tilde{u})$ a.e. as $j\rightarrow\infty$, we see that $T_k^{+}(\tilde{u})\in W^{1,p}_0(\Om)$ and 
\begin{equation*}
\alpha \int_{\Om} |\nabla T_k^{+}(\tilde{u})|^p dx \leq k\mu(\Om)
\end{equation*}
for any $k>0$. By \cite[Lemmas 4.1 and 4.2]{BB6}, this yields
$$\tilde{u}\in L^{\frac{n(p-1)}{n-p},\infty}(\Om) \quad \text{and} \quad  D\tilde{u}\in L^{\frac{n(p-1)}{n-1},\infty}(\Om). $$
 
 Moreover, arguing as in Step 4 of the proof of Theorem 3.4 in \cite{DMOP}, we see that $\{\nabla u_j\}$ is a Cauchy sequence in measure which converges  to $D\tilde{u}$ a.e. in $\Om$. There is no need to take a subsequence here as the limit is independent of any subsequence. 
 
Moreover, for any Lipschitz function $h: \RR \rightarrow \RR$ such that $h'$ has compact support  and for any function $\varphi\in W^{1,r}(\Om)\cap L^\infty(\Om)$, $r>n$, such that 
$h(\tilde{u})\varphi\in W^{1,p}_0(\Om)$, we have   
$$\int_\Om \mathcal{A}(x, \nabla u_j)\cdot D\tilde{u} \, h'(\tilde{u}) \, \varphi dx + \int_\Om \mathcal{A}(x, \nabla u_j) \cdot  \nabla \varphi \,  h(\tilde{u})dx =\int_\Om h(\tilde{u}) \, \varphi d\mu_j.$$

Thus if the support of $h'$ is in $[-M,M]$, $M>0$, then, using $0\leq u_j\leq \tilde{u}$, we can rewrite the above equality as 
\begin{align*}
\int_\Om \mathcal{A}(x, \nabla T_M^{+}(u_j)) &\cdot \nabla T_M^{+}(\tilde{u}) \, h'(\tilde{u})\,  \varphi dx + \int_\Om \mathcal{A}(x, \nabla u_j) \cdot  \nabla \varphi \,  h(\tilde{u})dx\\
&\qquad  =\int_\Om h(\tilde{u}) \, \varphi d\mu_j=\int_{\{0\leq v<j\}} h(\tilde{u}) \, \varphi d\mu.
\end{align*}

Note that by \eqref{uboundp} and \cite[Lemma 4.2]{BB6}, we have that $\nabla u_j$ is uniformly bounded in 
$ L^{\frac{n(p-1)}{n-1},\infty}(\Om)$ and $\nabla T^{+}_{M}(u_j)$ is uniformly bounded in 
$ L^{p}(\Om)$. Thus by the Vitali Convergence Theorem, the left-hand side of the above equality converges to 
\begin{align*}
&\int_\Om \mathcal{A}(x, \nabla T_M^{+}(\tilde{u})) \cdot \nabla T_M^{+}(\tilde{u}) \, h'(\tilde{u}) \, \varphi dx + \int_\Om \mathcal{A}(x, D \tilde{u}) \cdot  \nabla \varphi \,  h(\tilde{u})dx\\
&\qquad  =\int_\Om \mathcal{A}(x, D \tilde{u}) \cdot D\tilde{u} \, h'(\tilde{u}) \,  \varphi dx + \int_\Om \mathcal{A}(x, D \tilde{u}) \cdot  \nabla \varphi  \, h(\tilde{u})dx.
\end{align*}  

On the other hand, by the Lebesgue Dominated Convergence Theorem we have 
$$\lim_{j\rightarrow\infty}\int_{\{0\leq v<j\}} h(\tilde{u}) \, \varphi d\mu=\int_{\Om} h(\tilde{u}) \, \varphi d\mu.$$

Thus, we get 
\begin{align*}
\int_\Om \mathcal{A}(x, D\tilde{u}) \cdot D\tilde{u} \, h'(\tilde{u}) \, \varphi dx + \int_\Om \mathcal{A}(x, D \tilde{u}) \cdot  \nabla \varphi  \, h(\tilde{u})dx=\int_{\Om} h(\tilde{u}) \, \varphi d\mu,
\end{align*}
which yields that $\tilde{u}$ is the renormalized solution of \eqref{muequ} (see Definition \ref{hprimesupp}).
\end{proof}

We recall that by $\varkappa=\varkappa (\RR^n)$ we denote the least constant in the weighted norm inequality (see \cite{CV2}, \cite{Ver1}) 
\begin{equation}\label{varkappa}
\left(\int_{\RR^n} |\varphi|^q \, d \sigma\right)^{\frac 1 q} \le \varkappa \,  \Vert - \textrm{div}\, \mathcal{A}(x,  \nabla \varphi)\Vert^{\frac{1}{p-1}}_{\M^+(\RR^n)},  
\end{equation} 
for all $\mathcal{A}$-superharmonic  functions $\varphi\ge 0$ in $\RR^n$ such that  $\displaystyle{\liminf_{|x| \to \infty}} \, \varphi(x)=0$. Notice that by estimates \eqref{Wolffbound}, $K^{-1} \, \varphi \le {\bf W}_{1, p} \mu \le K \, \varphi$, where 
$\mu=-\textrm{div}\, \mathcal{A}(x,  \nabla \varphi)\in \M^{+}(\RR^n)$. Here we may  assume without loss of generality that  $\mu \in \M_b^{+}(\RR^n)$, so that 
 ${\bf W}_{1, p} \mu\not\equiv \infty$.  
Consequently,  
\eqref{varkappa} is equivalent to the inequality 
\begin{equation*}
\left(\int_{\RR^n} ({\bf W}_{1, p} \mu)^q \, d \sigma\right)^{\frac 1 q} \le \kappa \,  \Vert \mu \Vert^{\frac{1}{p-1}}_{\M^+(\RR^n)}  \qquad \textrm{for all} \, \, \mu \in \M_b^+(\RR^n), 
\end{equation*} 
where $K^{-1} \, \varkappa\le \kappa \le K \, \varkappa$. In particular, one can replace $\textrm{div}\, \mathcal{A}(x,  \nabla \varphi)$ in \eqref{varkappa}
by  $\Delta_p$,  up to a constant which depends only on $K$.

By $\varkappa(B)$, where  $B$ is a ball in $\RR^n$, we denote the least constant in a similar localized weighted norm inequality 
with the measure $\sigma_B$ in place of $\sigma$, where  
$\sigma_B=\chi_B \, \sigma$ is the restriction of $\sigma$ to $B$.

The so-called \textit{intrinsic} nonlinear potential $\mathbf{K}_{p, q} \sigma$, introduced in \cite{CV2},   
 is defined  by 
\begin{equation*}
\mathbf{K}_{p, q}  \sigma (x)  =  \int_0^{\infty} \left(\frac{ \varkappa(B(x, t))^{\frac{q(p-1)}{p-1-q}}}{t^{n- p}}\right)^{\frac{1}{p-1}}\frac{dt}{t}, \qquad x \in \RR^n.
\end{equation*} 
Here  $B=B(x, t)$ is a ball in $\RR^n$ of radius $t>0$ centered at $x$. 
As was noticed in \cite{CV2},  $\mathbf{K}_{p, q}  \sigma \not\equiv + \infty$ if and only if 
\begin{equation}\label{suffcond1}
\int_1^{\infty} \left(\frac{ \varkappa(B(0, t))^{\frac{q(p-1)}{p-1-q}}}{t^{n- p}}\right)^{\frac{1}{p-1}}\frac{dt}{t} < \infty.  
\end{equation}

By  \cite[Theorem 1.1]{CV2}, there exists a nontrivial $\mathcal{A}$-superharmonic solution to the homogeneous equation \eqref{sub-quasi-pde-a} 
in the case $\mu=0$ if and only if $\mathbf{W}_{1, p}  \sigma \not\equiv + \infty$ and $\mathbf{K}_{p, q}  \sigma \not\equiv + \infty$, i.e., 
conditions \eqref{finiteness} and \eqref{suffcond1} hold. The next theorem shows 
that this solution is actually reachable.

\begin{theorem}\label{exist} Let $0<q<p-1$, and let $\sigma\in \MM(\RR^n)$.  
	Then the nontrivial minimal $\mathcal{A}$-superharmonic solution $u$ of 
	\begin{equation}\label{sub-quasi-pde}
	\left\{ \begin{array}{ll}
	-{\rm div}\, \mathcal{A}(x,\nabla u) = \sigma u^q, \quad u\ge 0   \quad \text{in } \RR^n, \\
	\displaystyle{\liminf_{|x|\rightarrow \infty}}\,  u = 0, 
	\end{array}
	\right.
	\end{equation}
 constructed in the proof of \cite[Theorem 1.1]{CV2} under the conditions \eqref{finiteness} and \eqref{suffcond1}, is an $\mathcal{A}$-superharmonic reachable solution. 
\end{theorem}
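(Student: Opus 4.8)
The plan is to revisit the construction in \cite[Theorem 1.1]{CV2} and show that the minimal solution $u$ produced there automatically fits the definition of a reachable solution (Definition \ref{reach}). Recall that in \cite{CV2} the minimal solution is built by an iteration scheme: one starts from the minimal solution $u_0$ of $-{\rm div}\, \mathcal{A}(x,\nabla u_0)=\sigma\cdot c_0$ for a suitable small constant $c_0\ge 0$ (or $u_0\equiv 0$), and then defines inductively $u_{j+1}$ as the minimal $\mathcal{A}$-superharmonic solution of $-{\rm div}\, \mathcal{A}(x,\nabla u_{j+1}) = \sigma\, u_j^q$ in $\RR^n$ with $\liminf_{|x|\to\infty} u_{j+1}=0$; the sequence $\{u_j\}$ is nondecreasing and converges to the minimal solution $u$ of \eqref{sub-quasi-pde}. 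The key point is that each data measure $\sigma\, u_j^q$ is itself a nonnegative locally finite measure satisfying the finiteness condition \eqref{finiteness}, because $u_j\le K\,{\bf W}_{1,p}(\sigma u_{j-1}^q)$ and the potential estimates in \cite{CV2} control $\int \sigma u_j^q$ on balls via $\varkappa$ and the intrinsic potential $\mathbf{K}_{p,q}\sigma$.

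First I would invoke Theorem \ref{EandU}: since $\sigma u_j^q \in \MM(\RR^n)$ satisfies \eqref{finiteness}, there is an $\mathcal{A}$-superharmonic reachable solution to $-{\rm div}\, \mathcal{A}(x,\nabla w) = \sigma u_j^q$, and because $\sigma u_j^q \ll {\rm cap}_p$ (as $\sigma\ll{\rm cap}_p$, which holds whenever a nontrivial solution exists, by \cite[Lemma 3.6]{CV2}), this reachable solution is unique and coincides with the minimal one — hence it is exactly $u_{j+1}$. So each $u_{j+1}$ is reachable: there are compactly supported measures $\sigma_{j,k}\le \sigma u_j^q$ with corresponding solutions $u_{j,k}\to u_{j+1}$ a.e. as $k\to\infty$. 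Next I would combine these approximations by a diagonal argument: for each $j$ pick $k=k(j)$ large enough that $u_{j,k(j)}$ approximates $u_{j+1}$ closely (say in $L^1_{\rm loc}$ or a.e. along a subsequence), set $\tau_j := \sigma_{j-1,k(j-1)}$, which is compactly supported, and observe $\tau_j \le \sigma u_{j-1}^q \le \sigma u^q$ since $u_{j-1}\le u$; the corresponding solution $v_j := u_{j-1,k(j-1)}$ then converges a.e. to $u$. This gives exactly sequences $\{v_j\}$, $\{\tau_j\}$ verifying (i), (ii), (iii) of Definition \ref{reach} for the equation \eqref{Basic-PDE} with datum $\sigma u^q$, so $u$ is an $\mathcal{A}$-superharmonic reachable solution.

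The one subtlety I would need to handle carefully is the correction announced before the theorem: the original proof in \cite{CV2} used \cite[Lemma 5.2]{CV2}, which contained inaccuracies, and the corrected comparison tool is Lemma \ref{CP-corrected}. So in carrying out the iteration — in particular, in proving monotonicity $u_j\le u_{j+1}$ and in identifying each $u_{j+1}$ with the minimal solution — I would replace every appeal to \cite[Lemma 5.2]{CV2} with an appeal to Lemma \ref{CP-corrected} (applied on exhausting balls $B_R(0)$, where the truncations $\min\{v,k\}$ lie in $W^{1,p}(B_R)$), together with Theorem \ref{comprin} for the passage to $\RR^n$. Everything else in the \cite{CV2} argument — the fixed-point/iteration estimates, the use of $\mathbf{K}_{p,q}\sigma$ and $\varkappa$, and the a.e.\ convergence via \cite[Theorem 1.17]{KM1} — goes through unchanged.

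I expect the main obstacle to be bookkeeping rather than a genuine difficulty: one must be sure that the compactly supported approximants $\tau_j$ can be chosen simultaneously to (a) remain below $\sigma u^q$ and (b) produce solutions converging to $u$, while the ``moving target'' $u_{j-1}^q$ also converges to $u^q$. The clean way around this is to first fix $j$, use that $u_j$ is reachable to get a good compactly supported approximant of the \emph{data} $\sigma u_{j-1}^q$, and only then let $j\to\infty$, exploiting monotonicity of $\{u_j\}$ so that all approximants are dominated by $\sigma u^q$ uniformly; weak continuity of the map datum $\mapsto$ solution (\cite{TW}, as already used in the Remark after Definition \ref{reach}) then guarantees the a.e.\ limit is $u$.
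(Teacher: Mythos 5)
Your argument is essentially correct, but it takes a different route from the paper. The paper does not touch the linearized iterates for the full measure $\sigma$ at all: it reruns the whole \cite{CV2} double-limit construction with the truncated data $\sigma|_{B_m(0)}$, producing for each $m$ a minimal solution $v_m$ of the \emph{nonlinear} problem with datum $\sigma|_{B_m(0)}$, uses Lemma \ref{CP-corrected} directly on the ball approximants $v_{j,m}^k$ versus $u_j^k$ to get $v_{m_1}\le v_{m_2}\le u$, and then lets $m\to\infty$; the limit $v=\lim_m v_m$ is reachable by construction (approximating data $\sigma|_{B_m(0)}v_m^q$, compactly supported and $\le \sigma u^q$, with the $v_m$ converging monotonically), and $v=u$ by minimality of $u$. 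You instead exploit Theorem \ref{EandU}: each linearized iterate $u_{j+1}$, once identified with the minimal solution of $-{\rm div}\,\mathcal{A}(x,\nabla w)=\sigma u_j^q$ (which requires the Lemma \ref{CP-corrected} comparison on exhausting balls, exactly as you indicate), is itself reachable with compactly supported approximants $(\sigma u_j^q)|_{B_i(0)}\le \sigma u^q$, and a diagonal extraction over $j$ then verifies Definition \ref{reach} for $u$. This is legitimate: the data $\sigma u_j^q$ are locally finite, $\ll{\rm cap}_p$, and satisfy \eqref{finiteness} (since ${\bf W}_{1,p}(\sigma u_j^q)\le {\bf W}_{1,p}(\sigma u^q)\le Ku$), so Theorem \ref{EandU} applies. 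What the paper's route buys is the avoidance of any diagonal/subsequence step (the $v_m$ increase, so a.e.\ convergence is automatic) and approximants that are themselves solutions of truncated nonlinear problems, a form reused verbatim in Theorem \ref{EX}; what your route buys is that it recycles the already-proved equivalence ``minimal $=$ reachable'' for linear data instead of redoing the nonlinear construction. Two small points to tighten: in the diagonal step, secure condition (iii) of Definition \ref{reach} by choosing $k(j)$ via a metric for local convergence in measure and then passing to an a.e.\ convergent subsequence (the closing appeal to \cite{TW} does not do this job -- weak continuity gives convergence of Riesz measures from a.e.\ convergence of solutions, not the converse); and the identification of the \cite{CV2} iterates with minimal solutions of the linearized problems should be stated as a lemma, since in \cite{CV2} they are defined as double limits of renormalized solutions on balls, not as minimal solutions.
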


\begin{proof} We start with the same construction as in the proof of \cite[Theorem 1.1]{CV2} for the  minimal $\mathcal{A}$-superharmonic solution $u$, but  with datum $\sigma|_{B_m(0)}$ in place of $\sigma$ ($m=1,2, \dots$). 

For a fixed $m$, let $v_m$ be the minimal $\mathcal{A}$-superharmonic solution to the equation 
	\begin{equation*}
	\left\{ \begin{array}{ll}
	-{\rm div}\, \mathcal{A}(x,\nabla v_m) = \sigma|_{B_m(0)} v_{m}^q, \quad v_m\geq 0,   \quad \text{in } \RR^n, \\
	\displaystyle{\liminf_{|x|\rightarrow \infty}}\,  v_m = 0, 
	\end{array}
	\right.
	\end{equation*}
We recall from the construction in \cite{CV2} that 
$$v_m=\lim_{j\rightarrow\infty}(\lim_{k\rightarrow\infty} v_{j, m}^{k}),$$
where $v_{1,m}^k$ ($k=0,1,2,\dots$) is the
$\mathcal{A}$-superharmonic renormalized solution of 
\begin{equation*}
\left\{ \begin{array}{ll}
-{\rm div}\, \mathcal{A}(x,\nabla v_{1,m}^k) = \sigma|_{B_m(0)\cap B_{2^k}(0)} \, w_{0,m}^q   \quad \text{in } B_{2^k}(0), \\
v_{1,m}^k = 0  \quad \text{on } \partial B_{2^k}(0),
\end{array}
\right.
\end{equation*}
with $w_{0,m}=c_0 ({\bf W}_{1,p}(\sigma|_{B_m(0)}))^{\frac{p-1}{p-1-q}}$, and 
$v_{j,m}^k$ ($k=0,1,2,\dots$, $j=2,3, \dots$) is the
$\mathcal{A}$-superharmonic renormalized solution of 
\begin{equation*}
\left\{ \begin{array}{ll}
-{\rm div}\, \mathcal{A}(x,\nabla v_{j,m}^k) = \sigma|_{B_m(0)\cap B_{2^k}(0)}\, (\displaystyle{\lim_{i\rightarrow\infty}} v_{j-1,m}^i)^q   \quad \text{in } B_{2^k}(0), \\
v_{j,m}^k = 0  \quad \text{on } \partial B_{2^k}(0).
\end{array}
\right.
\end{equation*}
Here $c_0$ is a fixed constant such that 
\begin{equation}\label{const-corr} 
0<c_0\leq \min\left\{ \left( \mathfrak{c}^{\frac{q}{p-1-q}} K^{-1} \right)^{\frac{p-1}{p-1-q}}, C K^{\frac{1-p}{p-1-q}}\right \},
\end{equation}
where $C$ is the constant in (3.9) of \cite{CV2}, and $\mathfrak{c}$ is the constant in (3.10) of \cite{CV2} with $\alpha=1$.

We also recall from  \cite{CV2}  that 
$$u=\lim_{j\rightarrow\infty}(\lim_{k\rightarrow\infty} u_{j}^{k}),$$
where $u_{j}^{k}$ are the
$\mathcal{A}$-superharmonic renormalized solutions of the corresponding problems  in $B_{2^k}(0)$ 
with $\sigma$ in place of $\sigma|_{B_m(0)}$.  In particular, $ \min (u_{j}^k, l) \in W^{1, p}_0 (B_{2^k}(0))$ and 
   $ \min (v_{j,m}^k, l) \in W^{1, p}_0 (B_{2^k}(0))$ 
for all $l>0$.

Thus, by the above version of the comparison principle  (Lemma \ref{CP-corrected}) we see that 
$$v_{j,m_1}^k\leq v_{j,m_2}^k \leq u_{j}^k\quad  {\rm in} \, \, B_{2^k}(0),$$
whenever $m_1 \le m_2$. 

This yields 
$$0\leq v_1\leq v_2 \leq \dots \leq v_m\leq u \quad  {\rm in} \, \, \RR^n.$$

Letting now $m\rightarrow\infty$, we obtain an $\mathcal{A}$-superharmonic reachable solution $$v\defeq \lim_{m\to \infty} v_m$$ to
\eqref{sub-quasi-pde} such that $v\leq u$ in $\RR^n$. As $u$ is the minimal $\mathcal{A}$-superharmonic solution  of 
\eqref{sub-quasi-pde}, we see on the other hand that $u\leq v$, and thus $u=v$, which completes the proof. 
\end{proof}

\noindent {\bf Remark.} In the proof of \cite[Theorem 1.1]{CV2}, there is a misprint in the exponent 
in inequality  \eqref{const-corr} above for 
the constant $c_0$. 
This choice of $c_0$ ensures  the minimality of the solution $u$ of \eqref{sub-quasi-pde} 
constructed in \cite{CV2}. 
\smallskip

We recall that, by \cite[Theorem 1.1]{Ver1} and \cite[Remark 4.3]{Ver1}, a nontrivial  $\mathcal{A}$-superharmonic solution of \eqref{sub-quasi-pde-a}  exists 
if and only if $\mathbf{W}_{1, p} \sigma\not \equiv \infty$, $\mathbf{K}_{p, q} \sigma\not \equiv \infty$, and 
 $\mathbf{W}_{1, p} \mu \not \equiv \infty$, i.e., the following three conditions hold:
\begin{align}\label{char1}
& \int_{1}^{\infty} \left(\frac{\sigma(B(0,\rho))}{\rho^{n-p}} \right)^{\frac{1}{p-1}}\frac{d\rho}{\rho}<+\infty,\\ 
& \int_1^{\infty} \left(\frac{ \varkappa(B(0, \rho))^{\frac{q(p-1)}{p-1-q}}}{t^{n- p}}\right)^{\frac{1}{p-1}}\frac{d\rho}{\rho} < \infty, \label{char2}\\   & \int_{1}^{\infty} \left(\frac{\mu(B(0,\rho))}{\rho^{n-p}} \right)^{\frac{1}{p-1}}\frac{d\rho}{\rho}<+\infty.\label{char3}
\end{align}

\begin{theorem}\label{EX} Let $0<q<p-1$, and let $\mu, \sigma\in \MM(\RR^n)$, where $\mu<<{\rm cap}_p$.  
	Then, under the conditions \eqref{char1}, \eqref{char2}, and \eqref{char3}, 
	there exists a  nontrivial minimal reachable $\mathcal{A}$-superharmonic solution of \eqref{sub-quasi-pde-a}.  
\end{theorem}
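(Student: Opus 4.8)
The plan is to combine the iterative construction of \cite{CV2, Ver1} for the full equation \eqref{sub-quasi-pde-a} with the reachability/comparison machinery developed in Theorem \ref{exist} and Lemma \ref{CP-corrected}. First I would recall the construction of a minimal nontrivial solution $u$ of \eqref{sub-quasi-pde-a} given in \cite[Theorem 1.1]{Ver1}: under \eqref{char1}--\eqref{char3} one builds $u$ as a double limit $u=\lim_{j\to\infty}(\lim_{k\to\infty} u_j^k)$, where $u_j^k$ is the $\mathcal{A}$-superharmonic renormalized solution in $B_{2^k}(0)$ of the frozen problem $-\operatorname{div}\mathcal{A}(x,\nabla u_j^k)=\sigma|_{B_{2^k}(0)}\,(\lim_i u_{j-1}^i)^q+\mu|_{B_{2^k}(0)}$ with zero boundary data, started from a suitable multiple of $({\bf W}_{1,p}(\sigma)+{\bf W}_{1,p}(\mu))^{(p-1)/(p-1-q)}$; here the homogeneity condition \eqref{homogeneity} is exactly what makes the relevant scaling in the iteration work, and it is invoked through the corrected comparison principle Lemma \ref{CP-corrected}. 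The pointwise bounds of \cite{Ver1} give $u\le K\big({\bf W}_{1,p}\sigma+{\bf W}_{1,p}\mu+ {\bf K}_{p,q}\sigma\big)$, hence $\liminf_{|x|\to\infty}u=0$.

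Next I would run the same construction with the truncated data $\sigma|_{B_m(0)}$ and $\mu|_{B_m(0)}$ in place of $\sigma$ and $\mu$, obtaining minimal solutions $v_m$ of $-\operatorname{div}\mathcal{A}(x,\nabla v_m)=\sigma|_{B_m(0)}v_m^q+\mu|_{B_m(0)}$. Since $\sigma|_{B_m(0)}+\mu|_{B_m(0)}$ has compact support, $v_m$ is itself a reachable solution of \eqref{sub-quasi-pde-a} (with truncated data), by the argument of Theorem \ref{exist} together with Theorem \ref{us}(iii). Comparing the finite-stage iterates: since $\sigma|_{B_{m_1}(0)}\le\sigma|_{B_{m_2}(0)}\le\sigma$ and $\mu|_{B_{m_1}(0)}\le\mu|_{B_{m_2}(0)}\le\mu$ for $m_1\le m_2$, and since $\mu|_{B_m(0)}\ll{\rm cap}_p$, Lemma \ref{CP-corrected} applied at each level $(j,k)$ gives $v_{j,m_1}^k\le v_{j,m_2}^k\le u_j^k$ in $B_{2^k}(0)$, provided the frozen right-hand sides are ordered — which follows inductively from the ordering of the previous stage. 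Passing to the limits in $k$ and $j$ yields a monotone chain $0\le v_1\le v_2\le\cdots\le v_m\le u$ in $\RR^n$, with each $v_m$ nontrivial for $m$ large (the homogeneous subproblem already forces $v_m\gtrsim ({\bf W}_{1,p}(\sigma|_{B_m}))^{(p-1)/(p-1-q)}$, which is $\not\equiv 0$).

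Then I would set $v\defeq\lim_{m\to\infty}v_m$. One checks, via the pointwise upper bound $v_m\le K\big({\bf W}_{1,p}\sigma+{\bf W}_{1,p}\mu+{\bf K}_{p,q}\sigma\big)$ (uniform in $m$) and \cite[Theorem 1.17]{KM1}, that $v$ is $\mathcal{A}$-superharmonic in $\RR^n$; weak continuity of the $\mathcal{A}$-Laplacian \cite{TW} and the dominated convergence theorem (using $v_m\in L^q_{\rm loc}(\sigma)$ uniformly, from the $\varkappa$-inequality \eqref{varkappa} localized) identify the Riesz measure of $v$ as $\sigma v^q+\mu$, so $v$ solves \eqref{sub-quasi-pde-a}, with $\liminf_{|x|\to\infty}v=0$. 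By construction $v$ is a reachable solution: the approximating pairs $(v_m,\,\sigma|_{B_m(0)}v_m^q+\mu|_{B_m(0)})$ satisfy (i)--(iii) of Definition \ref{reach}, once one notes $\sigma|_{B_m}v_m^q+\mu|_{B_m}\le\sigma v^q+\mu$ because $v_m\le v$. Finally, $v\le u$ from the chain, while minimality of $u$ among all $\mathcal{A}$-superharmonic solutions of \eqref{sub-quasi-pde-a} gives $u\le v$; hence $u=v$ is reachable and minimal.

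The main obstacle I anticipate is the passage to the limit in the nonlinear term $\sigma v_m^q$: one must show $\int \varphi\, v_m^q\, d\sigma\to\int\varphi\, v^q\, d\sigma$ for test functions $\varphi$, which requires a uniform-integrability estimate for $v_m^q$ against $\sigma$ independent of $m$. This is where the intrinsic-potential bound and the localized norm inequality \eqref{varkappa} enter decisively, exactly as in \cite{Ver1}; the secondary technical point is verifying that the inductive ordering of the frozen right-hand sides (and hence the applicability of Lemma \ref{CP-corrected} at every stage, which needs the \emph{smaller} datum to be $\ll{\rm cap}_p$) is preserved — this is why the hypothesis $\mu\ll{\rm cap}_p$ is imposed, together with the fact that $\sigma\ll{\rm cap}_p$ is automatic from the existence of a nontrivial solution.
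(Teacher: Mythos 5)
Your reachability argument (truncate the data to $B_m(0)$, build monotone approximations, compare, and pass to the limit $m\to\infty$) matches the paper's strategy, but there is a genuine gap in the minimality part, which is half of what the theorem asserts. You take for granted that the solution $u$ produced by the construction in \cite[Theorem 1.1]{Ver1} is minimal; it is not known to be, and the paper stresses this (\cite{Ver1} yields nontrivial solutions, not necessarily reachable, and nothing there gives minimality). The whole point of the hypothesis $\mu\ll{\rm cap}_p$ in Theorem \ref{EX} is to \emph{modify} the iteration of \cite{Ver1}: starting from $u_0=0$, at each step one chooses $u_{j+1}$ to be the \emph{minimal} $\mathcal{A}$-superharmonic solution of \eqref{iter} --- available by Theorem \ref{miniexist} because $\sigma u_j^q+\mu\ll{\rm cap}_p$ --- and then shows by induction, via the global comparison principle of Theorem \ref{comprin}, both that $u_j\le u_{j+1}$ and that $u_{j+1}\le\tilde u$ for an \emph{arbitrary} solution $\tilde u$ of \eqref{sub-quasi-pde-a}; only this yields $u=\lim_j u_j\le\tilde u$, i.e., minimality. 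Your proposal contains no comparison of your iterates against an arbitrary solution $\tilde u$, so your final step ``minimality of $u$ gives $u\le v$'' is unsupported: as written you only produce some reachable solution $v\le u$, not a minimal one. Moreover, if you tried to run a minimality argument along the lines of \cite{CV2}, your proposed starting iterate, a multiple of $\left({\bf W}_{1,p}\sigma+{\bf W}_{1,p}\mu\right)^{\frac{p-1}{p-1-q}}$, is not a lower barrier for all solutions: the lower bound of \cite[Theorem 4.1]{Ver1} involves ${\bf W}_{1,p}\mu$ to the first power, not to the power $\frac{p-1}{p-1-q}$, so the induction could already fail at the initial step.

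Two smaller inaccuracies. The homogeneity condition \eqref{homogeneity} plays no role in Theorem \ref{EX} --- it is needed only for the uniqueness result, Theorem \ref{main-thm-q-p}, through the rescaled function $C^{\frac{q}{p-1}}u$ --- and it is certainly not ``invoked through'' Lemma \ref{CP-corrected}, which is proved for general $\mathcal{A}$ satisfying \eqref{structure} alone. Also, the ball-level double limit you describe is the construction of \cite{CV2} for $\mu=0$ (used in Theorem \ref{exist}); for $\mu\ne0$ the paper instead iterates the whole-space problems \eqref{iter} and compares via Theorem \ref{comprin}, which also removes your worry about passing to the limit in $\sigma v_m^q$: the approximations increase monotonically and are dominated by the potential upper bound of \cite[Theorem 1.1]{Ver1}, so monotone convergence together with the weak continuity result of \cite{TW} and \cite[Theorem 1.17]{KM1} identifies the limit equation, with no need for a uniform integrability estimate through \eqref{varkappa}.
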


\begin{proof} Since the case $\mu=0$ was treated in Theorem \ref{exist} above, without loss of generality we may assume that $\mu\not=0$. We recall that in the proof of \cite[Theorem 1.1]{Ver1}, a  nontrivial $\mathcal{A}$-superharmonic solution $u$ of \eqref{sub-quasi-pde-a}, was constructed using the following  iteration process.   
We set $u_0=0$, and for $j=0, 1, 2, \ldots$  
 construct the iterations 
\begin{equation}\label{iter}
	\left\{ \begin{array}{ll}
	-{\rm div}\, \mathcal{A}(x, \nabla u_{j+1}) = \sigma u_j^q + \mu   \quad \text{in } \RR^n, \\
	\displaystyle{\liminf_{|x|\rightarrow \infty}}\,  u_{j+1} = 0, 
	\end{array}
	\right.
\end{equation}
where $u_j \in L^q_{{\rm loc}} (\RR^n, d \sigma)$. 
We observe that, for each $j$, the solution $u_{j+1}$ was chosen in \cite{Ver1} 
so that $u_{j}\le u_{j+1}$ ($j=0, 1, 2, \ldots$) 
by a   version of the comparison principle (see \cite[Lemma 3.7 and Lemma 3.9]{PV2}). Then 
$u \defeq \lim_{j \to \infty} u_j$ is a nontrivial $\mathcal{A}$-superharmonic solution  of \eqref{sub-quasi-pde-a}. 

We now modify this argument as follows to obtain a \textit{minimal} nontrivial $\mathcal{A}$-superharmonic solution of \eqref{sub-quasi-pde-a}. Notice that  $\mu<<{\rm cap}_p$ by assumption, and, as mentioned above, $\sigma<<{\rm cap}_p$, 
since 
a solution exists. Hence, clearly the measure $\sigma u_j^q + \mu <<{\rm cap}_p$ as well. By Theorem \ref{miniexist},  
$u_{j+1}$ can be chosen as the \textit{minimal} $\mathcal{A}$-superharmonic solution to \eqref{iter}. 

It follows by induction  that 
 $u_{j}\le u_{j+1}$ ($j=0, 1, 2, \ldots$). Indeed, this is trivial when $j=0$,  and then by the inductive step,  
 $$
 \sigma u_{j-1}^q +\mu \le  \sigma u_{j}^q +\mu, \quad j=1,2, \ldots ,  
 $$
  which is obvious when $j=1$. From this, using Theorem \ref{comprin} we deduce
 $u_{j}\le u_{j+1}$ for all $j=1, 2, \ldots$. 
  
  Similarly, if $\tilde{u}$ is any $\mathcal{A}$-superharmonic solution of \eqref{sub-quasi-pde-a}, then again arguing by induction  and using Theorem \ref{comprin}, we deduce that $u_{j+1} \le \tilde{u}$ ($j=0, 1, 2, \ldots$), since 
  $$
 \sigma u_{j-1}^q +\mu \le  \sigma  \tilde{u}^q +\mu, \quad j=1,2, \ldots .  
 $$ 
Consequently, $u\le  \tilde{u}$, i.e., $u$ is the minimal $\mathcal{A}$-superharmonic solution of \eqref{sub-quasi-pde-a}.

We next show that $u$ is a reachable solution. Using a similar iteration process with $\sigma|_{B_m(0)}$ in place of $\sigma$ and $\mu|_{B_m(0)}$ in place of $\mu$ ($m=1,2, \dots$), we set $v_{0, m}=0$ and define 
$v_{j, m}$ to 
be the minimal $\mathcal{A}$-superharmonic solution to the equation 
	\begin{equation*}
	\left\{ \begin{array}{ll}
	-{\rm div}\, \mathcal{A}(x,\nabla v_{j+1, m}) = \sigma|_{B_m(0)} v_{j, m}^q + \mu|_{B_m(0)}, \quad v_{j, m}\geq 0,   \quad \text{in } \RR^n, \\
	\displaystyle{\liminf_{|x|\rightarrow \infty}}\,  v_{j, m} = 0, 
	\end{array}
	\right.
	\end{equation*}
	where $v_{j, m} \le v_{j+1, m}$ for each $m=1, 2, \ldots$.
	
As above, arguing by induction and using Theorem \ref{comprin},
we deduce 
$$
v_{j, m_1} \le v_{j, m_2} \le u_j, \quad j=1, 2, \ldots,
$$
whenever $m_1\le m_2$. It follows that $v_m \defeq \lim_{j\to \infty} v_{j, m}\le u$ ($m=1, 2, \ldots$)
is an $\mathcal{A}$-superharmonic  solution of the equation 
\begin{equation*}
	\left\{ \begin{array}{ll}
	-{\rm div}\, \mathcal{A}(x, \nabla v_{m}) =  \sigma|_{B_m(0)} v_{m}^q + \mu|_{B_m(0)}, \quad v_{m}\ge 0  \quad \text{in } \RR^n, \\
	\displaystyle{\liminf_{|x|\rightarrow \infty}}\,  v_{m} = 0, 
	\end{array}
	\right.
\end{equation*}
where $v_{m_1} \le v_{m_2} \le u$ if $m_1\le m_2$.

Thus, letting $m\rightarrow\infty$, we obtain an $\mathcal{A}$-superharmonic \textit{reachable} solution 
$v\defeq\lim_{m\to \infty} v_{m}$ to
\eqref{sub-quasi-pde} such that $v\leq u$. Since $u$ is the minimal $\mathcal{A}$-superharmonic solution  of 
\eqref{sub-quasi-pde}, we see that $u=v$, which completes the proof.
\end{proof}	

We now prove the uniqueness property for reachable solutions of \eqref{sub-quasi-pde-a}. 

\begin{theorem}\label{main-thm-q-p} Let $0<q<p-1$, and let $\mu, \sigma\in \MM(\RR^n)$, where $\mu<<{\rm cap}_p$. Suppose $\mathcal{A}$ satisfies conditions \eqref{structure} and \eqref{homogeneity}. 
Then nontrivial $\mathcal{A}$-superharmonic reachable solutions 
of \eqref{sub-quasi-pde-a} are unique. 
\end{theorem}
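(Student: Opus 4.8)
The plan is to prove that the only nontrivial $\mathcal{A}$-superharmonic reachable solution of \eqref{sub-quasi-pde-a} is the minimal reachable solution $\underline{u}$ constructed in Theorem \ref{EX}, which (as its proof shows) is minimal among \emph{all} $\mathcal{A}$-superharmonic solutions of \eqref{sub-quasi-pde-a}. Consequently any nontrivial reachable solution $v$ satisfies $\underline u\le v$ automatically, and the whole matter reduces to the reverse inequality $v\le\underline u$.

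The decisive ingredient is the bilateral pointwise estimate of \cite[Theorem 1.1]{Ver1}. Since a nontrivial solution exists, conditions \eqref{char1}--\eqref{char3} hold, and every nontrivial $\mathcal{A}$-superharmonic solution of \eqref{sub-quasi-pde-a} is trapped between two fixed constant multiples of one and the same intrinsic nonlinear potential of $\mu$ and $\sigma$, the constants depending only on $n,p,q$ and the structural constants in \eqref{structure}. Applied to $v$ and to $\underline u$ this yields a constant $M=M(n,p,q,\alpha,\beta)\ge 1$ with $v\le M\,\underline u$ in $\RR^n$ (on the set where the controlling potential equals $+\infty$ both functions equal $+\infty$, so the bound is trivial there). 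I would then introduce
$$s_*:=\inf\bigl\{\,s\ge 1:\ v\le s\,\underline u\ \text{ a.e. in }\RR^n\,\bigr\},$$
so that $1\le s_*\le M$ and $v\le s_*\,\underline u$; the goal becomes $s_*=1$.

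Assume, for contradiction, that $s_*>1$. Here the homogeneity condition \eqref{homogeneity} enters. Since $\underline u$ solves $-{\rm div}\,\mathcal{A}(x,\nabla\underline u)=\sigma\underline u^q+\mu$ with $\liminf_{|x|\to\infty}\underline u=0$, the function $z:=s_*^{\,q/(p-1)}\,\underline u$ is $\mathcal{A}$-superharmonic with Riesz measure $s_*^{\,q}(\sigma\underline u^q+\mu)$ and $\liminf_{|x|\to\infty}z=0$. On the other hand, from $v\le s_*\,\underline u$ and $s_*^{\,q}\ge 1$,
$$\sigma v^q+\mu\ \le\ s_*^{\,q}\,\sigma\underline u^q+\mu\ \le\ s_*^{\,q}\bigl(\sigma\underline u^q+\mu\bigr),$$
and the left-hand side — the Riesz measure of $v$ — is absolutely continuous with respect to ${\rm cap}_p$, since $\sigma\ll{\rm cap}_p$ (recalled in the Introduction) and $\mu\ll{\rm cap}_p$ by hypothesis. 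Since $v$, being a reachable solution of \eqref{sub-quasi-pde-a}, is in particular an $\mathcal{A}$-superharmonic reachable solution of the linear equation $-{\rm div}\,\mathcal{A}(x,\nabla w)=\sigma v^q+\mu$ in the sense of Definition \ref{reach}, the comparison principle of Corollary \ref{cor-1}, applied with smaller datum $\sigma v^q+\mu$ and larger datum $s_*^{\,q}(\sigma\underline u^q+\mu)$, gives $v\le z=s_*^{\,q/(p-1)}\,\underline u$. By the definition of $s_*$ this forces $s_*\le s_*^{\,q/(p-1)}$, whereas $0<q/(p-1)<1$ and $s_*>1$ give $s_*^{\,q/(p-1)}<s_*$ — a contradiction. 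Hence $s_*=1$, i.e.\ $v\le\underline u$, and therefore $v=\underline u$; since $\underline u$ is a fixed nontrivial reachable solution, uniqueness follows.

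I expect the main obstacle to be the second step: obtaining the \emph{two-sided} comparison $v\asymp\underline u$ with a constant independent of the solution genuinely relies on the sharp bilateral potential estimates for all entire solutions from \cite{Ver1}, and one must verify that those constants do not depend on the particular solution and deal with the locus where the governing potential is infinite. The remaining pieces — the scaling identity from \eqref{homogeneity} and the use of Corollary \ref{cor-1} — are routine once the absolute continuity $\sigma v^q+\mu\ll{\rm cap}_p$ and the reachability of $v$ relative to the linear datum $\sigma v^q+\mu$ are in hand; note that the extra requirement $\sigma_i\le\sigma$ built into the notion of a reachable solution is precisely what makes Corollary \ref{cor-1} applicable here.
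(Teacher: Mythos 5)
Your proposal is correct and follows essentially the same route as the paper: the bilateral estimates of \cite{Ver1} give pointwise comparability of any two nontrivial solutions with a universal constant, and the homogeneity \eqref{homogeneity} together with Corollary \ref{cor-1} (applied to the datum $\sigma v^q+\mu\ll{\rm cap}_p$) contracts that constant through the exponent $q/(p-1)<1$. The only cosmetic differences are that you compare against the minimal solution of Theorem \ref{EX} (so one inequality comes for free) and phrase the contraction as an infimum/contradiction argument, whereas the paper compares two solutions directly, iterates $v\le C^{(q/(p-1))^j}u$, and then interchanges the roles of $u$ and $v$.
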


\begin{proof} Let $u, v$ be two nontrivial $\mathcal{A}$-superharmonic  solutions 
of \eqref{sub-quasi-pde-a} in $\RR^n$. Then by \cite[Theorem 1.1]{Ver1} and \cite[Remark 4.3]{Ver1}, there exists a  constant $C\ge 1$,  
depending only on $p$, $q$ and $n$, such that 
\begin{equation*}
C^{-1} \, u \le v \le C \, u \quad {\rm in} \, \, \RR^n. 
\end{equation*}
Hence, clearly, 
\begin{equation*}
   -{\rm div}\, \mathcal{A}(x, \nabla v)=\sigma v^q   +  \mu  \le C^q  \left( \sigma u^q  +  \mu \right)=-{\rm div}\, \mathcal{A}(x, \nabla (C^{\frac{q}{p-1}}  u)). 
\end{equation*}
Notice that here by definition $u, v \in L^{q}_{{\rm loc}}(\RR^n, \sigma)$. Suppose that 
$v$ is a reachable solution of \eqref{sub-quasi-pde-a} in $\RR^n$. Then by Corollary \ref{cor-1} with 
$ \sigma v^q  +  \mu$ in place of $\sigma$, and $ \tilde{\sigma} = C^q  \left(\sigma u^q  + \mu \right)$, 
it follows that $v \le C^{\frac{q}{p-1}}u$. 

By iterating this argument, we deduce 
\begin{equation*}
 v \le C^{(\frac{q}{p-1})^j}u \quad {\rm in} \, \, \RR^n, \qquad j=1, 2,  \ldots .  
\end{equation*}
Since $0<q<p-1$, letting $j \to \infty$ in the preceding inequality, we obtain $v \le u$ in $\RR^n$. Interchanging the roles of $u$ and $v$, we see that actually $u=v$ in $\RR^n$. 
\end{proof}

\begin{corollary}\label{cor-4.5}
		 Nontrivial $\mathcal{A}$-superharmonic solutions $u$ 
of \eqref{sub-quasi-pde-a} are unique under the assumptions of Theorem \ref{main-thm-q-p}, provided any one of the following conditions holds:

		\noindent {\rm (i)}   $u \in L^q (\RR^n, d \sigma)$ and $ \mu \in \MM_b(\RR^n)$, or equivalently 
		$\varkappa (\RR^n) < \infty$ and $ \mu \in \MM_b(\RR^n)$;
		
		\noindent {\rm (ii)} 	$\lim_{|x| \to \infty} u(x) =0$;
		
		\noindent {\rm (iii)} $|D u| \in L^{p}(\RR^n)$, or $|D u| \in L^{\gamma,\infty}(\RR^n)$ for some $(p-1)n/(n-1)\leq  \gamma < p$.
		\end{corollary}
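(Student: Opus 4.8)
The plan is to deduce each of (i)--(iii) from Theorem~\ref{main-thm-q-p} by showing that, under the corresponding hypothesis, every nontrivial $\mathcal{A}$-superharmonic solution $u$ of \eqref{sub-quasi-pde-a} is automatically an $\mathcal{A}$-superharmonic \emph{reachable} solution. Set $\omega \defeq \sigma u^q + \mu$. Since a nontrivial solution exists, $\sigma << {\rm cap}_p$ (as recalled at the start of Sec.~\ref{sub-natural}), and $\mu << {\rm cap}_p$ by assumption; hence $\omega \in \MM(\RR^n)$, $\omega << {\rm cap}_p$, and $u$ is an $\mathcal{A}$-superharmonic solution of \eqref{Basic-PDE} with datum $\omega$. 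By \eqref{Wolffbound}, ${\bf W}_{1,p}\omega \le K u < \infty$ a.e., so $\omega$ also satisfies the finiteness condition \eqref{finiteness}. Consequently, by Theorem~\ref{EandU}, equation \eqref{Basic-PDE} with datum $\omega$ has a \emph{unique} $\mathcal{A}$-superharmonic reachable solution, which coincides with the minimal $\mathcal{A}$-superharmonic solution. Thus it is enough, in each case, to verify that $u$ equals this minimal solution: then $u$ is a reachable solution of \eqref{Basic-PDE} with datum $\sigma u^q + \mu$, which is exactly the meaning of ``reachable solution of \eqref{sub-quasi-pde-a}'' in Theorem~\ref{main-thm-q-p}, and uniqueness follows from that theorem.

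It remains to check, in each case, that $u$ agrees with the minimal solution of \eqref{Basic-PDE} with datum $\omega$. For (ii): if $\lim_{|x|\to\infty}u(x)=0$, then $u$ solves \eqref{Basic-PDE2} with datum $\omega << {\rm cap}_p$, and Theorem~\ref{thm-lim} gives directly that $u$ is the minimal $\mathcal{A}$-superharmonic reachable solution of \eqref{Basic-PDE} with datum $\omega$. For (iii): if $|Du| \in L^p(\RR^n)$, or $|Du| \in L^{\gamma,\infty}(\RR^n)$ with $(p-1)n/(n-1) \le \gamma < p$, then hypothesis (ii), resp.\ (i), of Theorem~\ref{us} holds for the equation with datum $\omega$, and that theorem yields $u=v$, where $v$ denotes the minimal solution. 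For (i): if $u \in L^q(\RR^n, d\sigma)$, then $\sigma u^q \in \MM_b(\RR^n)$ and, since also $\mu \in \MM_b(\RR^n)$, we get $\omega \in \MM_b(\RR^n)$; thus hypothesis (iii) of Theorem~\ref{us} applies to $\omega$, and again $u=v$.

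The only nonroutine ingredient is the equivalence, for a nontrivial solution $u$, of $u \in L^q(\RR^n, d\sigma)$ with $\varkappa(\RR^n) < \infty$. This is a known consequence of the results in \cite{CV2, Ver1}: by \cite[Theorem~1.1]{Ver1} any two nontrivial $\mathcal{A}$-superharmonic solutions of \eqref{sub-quasi-pde-a} are comparable, so $u \in L^q(\RR^n, d\sigma)$ for one such solution if and only if it holds for all of them, and the existence of such a finite-energy solution is in turn equivalent to $\varkappa(\RR^n) < \infty$ (together with \eqref{char1} and \eqref{char3}); alternatively one combines the weighted norm inequality \eqref{varkappa} with the bilateral estimates expressing $u$ through a nonlinear potential whose $L^q(d\sigma)$-norm is governed by $\varkappa(\RR^n)$. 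Beyond this, the argument is a bookkeeping reduction; the one thing to keep in mind --- and the main, if minor, obstacle --- is that although $\omega = \sigma u^q + \mu$ is built from the unknown solution $u$, it is a fixed element of $\MM(\RR^n)$ with $\omega << {\rm cap}_p$ satisfying \eqref{finiteness}, so that Theorems~\ref{us}, \ref{thm-lim}, \ref{EandU} and \ref{main-thm-q-p} apply to it verbatim.
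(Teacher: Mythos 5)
Your proposal is correct and follows essentially the same route as the paper: in each case you show that the solution is reachable for the datum $\sigma u^q+\mu$ (via Theorems \ref{thm-lim}, \ref{us} and \ref{EandU}) and then invoke Theorem \ref{main-thm-q-p}. The only place where the paper does substantially more work is the equivalence in (i), which it derives from \cite[Theorem 4.4]{CV2}, the bilateral estimates of \cite{Ver1}, and the use of a potential of $\mu$ as a test function in \eqref{varkappa} to obtain ${\bf W}_{1,p}\mu\in L^q(\RR^n,d\sigma)$; you compress this step to a citation-level sketch, but the ingredients you name are the correct ones.
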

		
	\begin{proof} Suppose first that {\rm (i)}  holds. By \cite[Theorem 4.4]{CV2},
	$\varkappa (\RR^n) < \infty$ if and only if   there exists 
	a nontrivial $\mathcal{A}$-superharmonic  solution $u \in L^q(\RR^n, d\sigma)$  of \eqref{sub-quasi-pde}.   
	In particular, since by \cite[Theorem 4.1]{Ver1},
	$$u \ge C \, \left[{\bf W}_{1, p} \sigma + \left({\bf K}_{p, q} \sigma \right)^{\frac{p-1}{p-1-q}} \right],$$
	 it follows that 
	$${\bf W}_{1, p} \sigma\in L^{\frac{q(p-1)}{p-1-q}}(\RR^n, d \sigma) \quad  \textrm{and} \quad  
	{\bf K}_{p, q} \sigma\in L^{q}(\RR^n, d \sigma).$$
	
	Next, we denote by  $\varphi$ an $\mathcal{A}$-superharmonic  solution to the equation 
	\begin{equation*}
\left\{ \begin{array}{ll}
  -{\rm div}\, \mathcal{A}(x, \nabla \varphi) = \mu, \quad \varphi \geq 0  \quad \text{in } \RR^n, \\
\displaystyle{\liminf_{|x|\rightarrow \infty}}\,  \varphi = 0, 
\end{array}
\right.
\end{equation*}
where $\mu$ is the Riesz measure of $\varphi$. Notice that $\varphi \ge K^{-1} 	\, {\bf W}_{1, p} \mu$ by the lower bound in inequality \eqref{Wolffbound}.
	  Since $ \mu \in \M_b(\RR^n)$ and $\varkappa (\RR^n) < \infty$, using  $\varphi$ as a test function in  inequality \eqref{varkappa}  yields ${\bf W}_{1, p} \mu \in L^q (\RR^n, d \sigma)$.

	 Hence, by \cite[Theorem 1.1]{Ver1} and \cite[Remark 4.3]{Ver1}, we deduce 
	 that there exists a nontrivial $\mathcal{A}$-superharmonic solution of \eqref{sub-quasi-pde-a} $u \in  L^q (\RR^n, d \sigma)$, and, for any such a solution, $\sigma u^q  + \mu\in \M_b(\RR^n)$. 
	 It follows that   $u$ is a reachable $\mathcal{A}$-superharmonic solution 
	of \eqref{sub-quasi-pde-a} by Theorem \ref{EandU} and Theorem \ref{us} (iii).

	In case {\rm (ii)},  by Theorem \ref{thm-lim} $u$ is a reachable solution 
	of \eqref{sub-quasi-pde-a}. 
	
	In case  {\rm (iii)}, $u$ is a  reachable $\mathcal{A}$-superharmonic solution 
	of \eqref{sub-quasi-pde-a} by Theorem \ref{EandU} and Theorem \ref{us} (i), (ii). 
	
	In all these cases, reachable $\mathcal{A}$-superharmonic solutions are unique by 
	Theorem \ref{main-thm-q-p}. 
	\end{proof}	
	
	\begin{remark}  Uniqueness of \textit{finite energy} solutions $u$ 
	to \eqref{sub-quasi-pde-a} such that   $|D u| \in L^{p}(\RR^n)$ in Corollary \ref{cor-4.5}(iii)  
	was established  in \cite[Theorem 6.1]{SV}  in the special case of the $p$-Laplace operator 
	using a different method. (See also an earlier result \cite[Theorem 5.1]{CV1} 
	in the case $\mu=0$.) Solutions of finite energy  to \eqref{sub-quasi-pde-a} exist if and only if ${\bf W}_{1, p} \sigma\in L^{\frac{(1+q)(p-1)}{p-1-q}}(\RR^n, d \sigma)$ and ${\bf W}_{1, p} \mu \in L^{1}(\RR^n, d \mu)$   
	(\cite[Theorem 1.1]{SV}).  
	\end{remark} 
	

	\begin{remark}\label{rem-last}  Under the assumptions of  Theorem \ref{EX}, 
	but without the restriction $\mu<<{\rm cap}_p$, it is still possible to prove the existence of an $\mathcal{A}$-superharmonic reachable solution
		(not necessarily minimal) of \eqref{sub-quasi-pde-a}. The construction 
		of such a solution makes use of  an  extension of \cite[Lemma 6.9]{PV1} proved below.
\end{remark}

	\begin{proof} 	To prove this claim, we shall construct first a nondecreasing sequence $\{u_m\}_{m\geq 1}$ of  $\mathcal{A}$-superharmonic solutions of 
		\begin{equation*}
		\left\{ \begin{array}{ll}
		-{\rm div}\, \mathcal{A}(x,\nabla u_{m}) = \sigma|_{B_m(0)} u_{m}^q + \mu|_{B_m(0)}, \quad u_{m}\geq 0,   \quad \text{in } \RR^n, \\
		\displaystyle{\liminf_{|x|\rightarrow \infty}}\,  u_{m} = 0. 
		\end{array}
		\right.
		\end{equation*}
Then by    \cite[Theorem 1.1 and Remark 4.3]{Ver1}, 
$$u_m(x) \leq C\left( {\bf W}_{1,p} \mu(x) + {\bf K}_{p,q}\sigma(x) +  [{\bf W}_{1,p} \sigma(x)]^{\frac{p-1}{p-1-q}} \right), \quad x\in\RR^n. $$
It follows from \cite[Theorem 1.17]{KM1} that $u_{m}\rightarrow u$ pointwise everywhere, 
where $u$ is   an $\mathcal{A}$-superharmonic reachable solution   of \eqref{sub-quasi-pde-a}.

		The construction of $\{u_m\}_{m\geq 1}$ can be done as follows. 
		 It suffices  to demonstrate only how to construct $u_1$ and $u_2$ such that $u_2\geq u_1$, since the construction of $u_m$ for $m\ge 3$ is completely analogous. 
		 Let $v_1$ be an  $\mathcal{A}$-superharmonic solution of 
		\begin{equation*}
		\left\{ \begin{array}{ll}
		-{\rm div}\, \mathcal{A}(x,\nabla v_{1}) = \mu|_{B_1(0)}, \quad v_{1}\geq 0,   \quad \text{in } \RR^n, \\
		\displaystyle{\liminf_{|x|\rightarrow \infty}}\,  v_{1} = 0. 
		\end{array}
		\right.
		\end{equation*}
		Here as above $v_1$ is an a.e. pointwise limit of a subsequence of $\{v_1^{(k)}\}_{k\geq 1}$, where each  $v_1^{(k)}$ is a nonnegative $\mathcal{A}$-superharmonic renormalized solution of 
		\begin{equation*}
		\left\{ \begin{array}{ll}
		-{\rm div}\, \mathcal{A}(x,\nabla v_{1}^{(k)}) = \mu|_{B_1(0)}   \quad \text{in } B_k(0), \\
		v_{1}^{(k)} = 0 \quad \text{on } \partial B_k(0). 
		\end{array}
		\right.
		\end{equation*}
		Next,  for any $j\geq1$, let $v_{j+1}$ be an  $\mathcal{A}$-superharmonic solution of 
		\begin{equation*}
		\left\{ \begin{array}{ll}
		-{\rm div}\, \mathcal{A}(x,\nabla v_{j+1}) = \sigma|_{B_1(0)} v_{j}^q + \mu|_{B_1(0)}, \quad v_{j+1}\geq 0,   \quad \text{in } \RR^n, \\
		\displaystyle{\liminf_{|x|\rightarrow \infty}}\,  v_{j+1} = 0.  
		\end{array}
		\right.
		\end{equation*}
	Notice that $v_{j+1}$ is an a.e. pointwise limit of a subsequence of $\{v_{j+1}^{(k)}\}_{k\geq 1}$, where each  $v_{j+1}^{(k)}$ is a nonnegative $\mathcal{A}$-superharmonic renormalized solution of 
		\begin{equation*}
		\left\{ \begin{array}{ll}
		-{\rm div}\, \mathcal{A}(x,\nabla v_{j+1}^{(k)}) =\sigma|_{B_1(0)} v_{j}^q  +\mu|_{B_1(0)}   \quad \text{in } B_k(0), \\
		v_{j+1}^{(k)} = 0 \quad \text{on } \partial B_k(0). 
		\end{array}
		\right.
		\end{equation*}
		By \cite[Lemma 6.9]{PV1} we may assume that $v_{2}^{(k)}\geq v_1^{(k)}$ for all $k\geq 1$, and hence $v_2\geq v_1$. In the same way, by  induction
		we deduce that $v_{j+1}^{(k)}\geq v_j^{(k)}$ for all $j, k\geq 1$. It follows that $v_{j+1}\geq v_j$, and 
		$$v_{j+1} \leq C\, {\bf W}_{1,p}(\sigma v_{j+1}^q) + C\, {\bf W}_{1,p}(\mu).$$
Then by \cite[Theorem 4.1]{Ver1}, for any $j \geq 1$, we obtain the bound 
\begin{equation}\label{vj1}
v_{j+1}(x) \leq C\left( {\bf W}_{1,p} \mu(x) + {\bf K}_{p,q}\sigma(x) +  [{\bf W}_{1,p} \sigma(x)]^{\frac{p-1}{p-1-q}} \right), \quad x\in\RR^n. 
\end{equation}
Thus, the nondecreasing sequence $\{v_j\}_{j\geq 1}$ converges to an 
		$\mathcal{A}$-superharmonic solution $u_1$ of 
		\begin{equation*}
		\left\{ \begin{array}{ll}
		-{\rm div}\, \mathcal{A}(x,\nabla u_{1}) = \sigma|_{B_1(0)} u_{1}^q + \mu|_{B_1(0)}, \quad u_{1}\geq 0,   \quad \text{in } \RR^n, \\
		\displaystyle{\liminf_{|x|\rightarrow \infty}}\,  u_{1} = 0. 
		\end{array}
		\right.
		\end{equation*}

		To construct $u_2$ such that $u_2\geq u_1$, let $w_1$ be an  $\mathcal{A}$-superharmonic solution of 
		\begin{equation*}
		\left\{ \begin{array}{ll}
		-{\rm div}\, \mathcal{A}(x,\nabla w_{1}) = \mu|_{B_2(0)}, \quad w_{1}\geq 0,   \quad \text{in } \RR^n, \\
		\displaystyle{\liminf_{|x|\rightarrow \infty}}\,  w_{1} = 0. 
		\end{array}
		\right.
		\end{equation*}

Notice that  $w_1$ is an a.e. pointwise limit of a subsequence of $\{w_1^{(k)}\}_{k\geq 1}$, where each  $w_1^{(k)}$ is a nonnegative $\mathcal{A}$-superharmonic renormalized solution of 
		\begin{equation*}
		\left\{ \begin{array}{ll}
		-{\rm div}\, \mathcal{A}(x,\nabla w_{1}^{(k)}) = \mu|_{B_2(0)}   \quad \text{in } B_k(0), \\
		w_{1}^{(k)} = 0 \quad \text{on } \partial B_k(0). 
		\end{array}
		\right.
		\end{equation*}
		Again, by \cite[Lemma 6.9]{PV1} we may assume that $w_{1}^{(k)}\geq v_1^{(k)}$ for all $k\geq 1$, and hence $w_1\geq v_1$.
		
		Next,  for any $j\geq1$, let $w_{j+1}$ be an  $\mathcal{A}$-superharmonic solution of 
		\begin{equation*}
		\left\{ \begin{array}{ll}
		-{\rm div}\, \mathcal{A}(x,\nabla w_{j+1}) = \sigma|_{B_2(0)} w_{j}^q + \mu|_{B_2(0)}, \quad w_{j+1}\geq 0,   \quad \text{in } \RR^n, \\
		\displaystyle{\liminf_{|x|\rightarrow \infty}}\,  w_{j+1} = 0. 
		\end{array}
		\right.
		\end{equation*}
		
		Notice that  $w_{j+1}$ is an a.e. pointwise limit of a subsequence of $\{w_{j+1}^{(k)}\}_{k\geq 1}$, where each  $w_{j+1}^{(k)}$ is a nonnegative $\mathcal{A}$-superharmonic renormalized solution of 
		\begin{equation*}
		\left\{ \begin{array}{ll}
		-{\rm div}\, \mathcal{A}(x,\nabla w_{j+1}^{(k)}) =\sigma|_{B_2(0)} w_{j}^q  +\mu|_{B_2(0)}   \quad \text{in } B_k(0), \\
		w_{j+1}^{(k)} = 0 \quad \text{on } \partial B_k(0). 
		\end{array}
		\right.
		\end{equation*}
	
	We can ensure here that $w_{j+1}^{(k)}\geq \max\{v_{j+1}^{(k)}, w_{j}^{(k)}\}$ for all $j, k\geq 1$. Indeed, 
		since $w_1\geq v_1$ and $w_1\geq 0$, by Lemma \ref{suppl} below we may 
	assume  that  $w_{2}^{(k)}\geq \max\{v_{2}^{(k)}, w_{1}^{(k)}\}$ for all $k\geq 1$, and hence  $w_{2}\geq \max\{v_{2}, w_1\}$.
Repeating this argument by induction we obtain  $w_{j+1}^{(k)}\geq \max\{v_{j+1}^{(k)}, w_{j}^{(k)}\}$ for all $j, k\geq 1$. 

It follows that  $w_{j+1}\geq \max\{v_{j+1}, w_{j}\}$ for all $j\geq 1$. As in \eqref{vj1} we have
\begin{equation*}
w_{j+1}(x) \leq C\left( {\bf W}_{1,p} \mu(x) + {\bf K}_{p,q}\sigma(x) +  [{\bf W}_{1,p} \sigma(x)]^{\frac{p-1}{p-1-q}} \right), \quad x\in\RR^n, 
\end{equation*}
and hence $\{ w_j\}$ is a nondecreasing sequence   which converges to an 
		$\mathcal{A}$-superharmonic solution $u_2$ of 
		\begin{equation*}
		\left\{ \begin{array}{ll}
		-{\rm div}\, \mathcal{A}(x,\nabla u_{2}) = \sigma|_{B_2(0)} u_{2}^q + \mu|_{B_2(0)}, \quad u_{2}\geq 0,   \quad \text{in } \RR^n, \\
		\displaystyle{\liminf_{|x|\rightarrow \infty}}\,  u_{2} = 0 
		\end{array}
		\right.
		\end{equation*} 
such that 	$u_2\geq u_1$, as desired. \end{proof}
	
	 The following lemma invoked in the argument presented above 
	 is an extension of \cite[Lemma 6.9]{PV1}.
	\begin{lemma}\label{suppl}
		Let $\Om$ be a bounded open set in $\RR^n$ and let $\mu_1, \mu_2 \in \mathcal{M}_{b}^+(\Om)$. Suppose that $u_i$ $(i=1, 2)$ is a renormalized solution of 
		\begin{equation*}
		\left\{ \begin{array}{ll}
		-{\rm div}\, \mathcal{A}(x,\nabla u_i) =\mu_{i}   \quad \text{in } \Om, \\
		u_i = 0 \quad \text{on } \partial \Om. 
		\end{array}
		\right.
		\end{equation*}
Then for any measure $\nu \in \mathcal{M}_{b}^+(\Om)$ such that $\nu\geq \mu_1$ and $\nu\geq \mu_2$, there is a renormalized solution $v$ of 
\begin{equation*}
\left\{ \begin{array}{ll}
-{\rm div}\, \mathcal{A}(x,\nabla v) = \nu   \quad \text{in } \Om, \\
v = 0 \quad \text{on } \partial \Om, 
\end{array}
\right.
\end{equation*}
such that $v\geq u_1$ and $v\geq u_2$ a.e.
	\end{lemma}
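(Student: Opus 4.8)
My first move would be to try to reduce the statement to the one-measure version \cite[Lemma 6.9]{PV1} together with the uniqueness of renormalized solutions. Applying \cite[Lemma 6.9]{PV1} to the pair $(u_1,\nu)$ (legitimate since $\mu_1\le\nu$) produces a renormalized solution $v^{(1)}$ of $-{\rm div}\,\mathcal{A}(x,\nabla v^{(1)})=\nu$, $v^{(1)}=0$ on $\partial\Om$, with $v^{(1)}\ge u_1$ a.e.; applying it to $(u_2,\nu)$ produces a renormalized solution $v^{(2)}$ of the same Dirichlet problem with $v^{(2)}\ge u_2$ a.e. Since renormalized solutions of $-{\rm div}\,\mathcal{A}(x,\nabla v)=\nu$, $v=0$ on $\partial\Om$, are unique \cite{DMOP}, we get $v^{(1)}=v^{(2)}=:v$, and this single $v$ satisfies $v\ge u_1$ and $v\ge u_2$ a.e., which is the assertion.

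In case one prefers (or needs) a self-contained argument mirroring the proof of \cite[Lemma 6.9]{PV1}, I would run the following approximation scheme. Write $\mu_i=\mu_{i,0}+\mu_{i,s}$ and $\nu=\nu_0+\nu_s$ for the decompositions into parts absolutely continuous, resp. concentrated on a set of zero $p$-capacity. Fix an exhaustion $\Om_k=\{x\in\Om:\, d(x,\partial\Om)>2/k,\ |x|<k\}$ and a standard mollifier $\rho_k$ supported in the ball of radius $1/k$, and set $\mu_i^k:=(\mu_i|_{\Om_k})*\rho_k$ and $\nu^k:=(\nu|_{\Om_k})*\rho_k$. These are nonnegative functions in $L^\infty(\Om)\subset W^{-1,p'}(\Om)$, compactly supported in $\Om$, with $\Vert\mu_i^k\Vert_{\MM(\Om)}\le\Vert\mu_i\Vert_{\MM(\Om)}$, $\Vert\nu^k\Vert_{\MM(\Om)}\le\Vert\nu\Vert_{\MM(\Om)}$, and, crucially, $\mu_i^k\le\nu^k$ pointwise, because $\mu_i\le\nu$ implies $\mu_i|_{\Om_k}\le\nu|_{\Om_k}$ and mollification by a common kernel preserves the order. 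Let $u_i^k,v^k\in W^{1,p}_0(\Om)$ be the unique weak solutions of $-{\rm div}\,\mathcal{A}(x,\nabla u_i^k)=\mu_i^k$ and $-{\rm div}\,\mathcal{A}(x,\nabla v^k)=\nu^k$. By the comparison principle \cite[Lemma 5.1]{CV2}, $u_i^k\le v^k$ a.e. for $i=1,2$. Finally, using the $L^1+W^{-1,p'}$ representation of the measures $\mu_{i,0},\nu_0\ll{\rm cap}_p$, together with the fact that $\mu_{i,s},\nu_s\ge 0$, the sequences $\mu_i^k\to\mu_i$ and $\nu^k\to\nu$ converge in the precise sense required by the stability theorem for renormalized solutions of \cite{DMOP}; combined with the uniform mass bound and \cite[Theorem 1.17]{KM1}, this yields, along a subsequence, $u_i^k\to u_i$ and $v^k\to v$ a.e., where $v$ is the renormalized solution of $-{\rm div}\,\mathcal{A}(x,\nabla v)=\nu$, $v=0$ on $\partial\Om$. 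Passing to the limit in $u_i^k\le v^k$ gives $u_1\le v$ and $u_2\le v$ a.e.

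The only point that is not purely routine is the last step: one must make sure that the limits of $u_i^k$ and of $v^k$ are the \emph{renormalized} solutions (as opposed to merely reachable/SOLA solutions), so that the limit $v$ genuinely has datum $\nu$ with zero boundary values and the identification $u_i^k\to u_i$ is valid; this is exactly where one invokes that capacity-absolutely-continuous measures lie in $L^1+W^{-1,p'}$, that the singular parts are nonnegative, and the stability and uniqueness results of \cite{DMOP}. The difficulty this circumvents is that a nonzero singular part of $\mu_i$ cannot be approximated from below by finite-energy measures, so the ordering $\mu_i^k\le\nu^k$ must be produced by a common mollification of $\mu_1,\mu_2,\nu$ rather than by restriction or truncation. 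The remaining bookkeeping --- keeping the mollified data supported in the bounded set $\Om$ and controlling mass near $\partial\Om$ --- is standard.
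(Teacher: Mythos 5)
Both routes in your proposal founder on the same point: they silently use uniqueness of renormalized solutions for measure data that need not be absolutely continuous with respect to the $p$-capacity, and that uniqueness is not available. In your first argument, the step ``$v^{(1)}=v^{(2)}$ by uniqueness \cite{DMOP}'' is unjustified: uniqueness of renormalized solutions of $-{\rm div}\,\mathcal{A}(x,\nabla v)=\nu$, $v=0$ on $\partial\Om$, is known essentially only when $\nu\ll{\rm cap}_p$; when $\nu$ has a nontrivial singular part it is a well-known open problem (this is exactly why Lemma \ref{CP-corrected} carries the hypothesis $\mu\ll{\rm cap}_p$, and why Section \ref{Quasilinear} stresses that current uniqueness results require absolute continuity). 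In the present lemma $\nu\in\MM_b(\Om)$ is arbitrary --- indeed the lemma is invoked in Remark \ref{rem-last} precisely in order to dispense with the restriction $\mu\ll{\rm cap}_p$ --- so the two solutions obtained by applying \cite[Lemma 6.9]{PV1} twice cannot be identified, and the whole content of the lemma (a \emph{single} $v$ dominating both $u_1$ and $u_2$) is lost; if the reduction via uniqueness were legitimate, the lemma would be an immediate corollary and there would be nothing to prove. Your second, mollification-based scheme has the same defect at the very place you flag: the a.e.\ limit of $u_i^k$ is \emph{some} renormalized solution $\hat u_i$ with datum $\mu_i$, but it cannot be identified with the \emph{given} renormalized solution $u_i$ unless $\mu_i\ll{\rm cap}_p$, since the ``uniqueness results of \cite{DMOP}'' cover only the diffuse case. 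Thus that argument yields only the weaker statement that there exist solutions $\hat u_1,\hat u_2$ with data $\mu_1,\mu_2$ dominated by some $v$, which is not enough for the intended application in Remark \ref{rem-last}, where $u_1,u_2$ are specific solutions produced at earlier stages of an iteration. (There are also nontrivial details in checking that restricted-and-mollified data satisfy the decomposed convergence hypotheses of the stability theorem, but that is secondary.)

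The missing idea, and the way the paper's proof avoids any appeal to uniqueness, is to approximate the \emph{given} solutions from below by their own truncations rather than by solutions of approximate problems: by the definition of a renormalized solution (Definition \ref{defrs}), $u_{i,k}=\min\{u_i,k\}$ is a bounded renormalized solution with datum $\mu_{i0}|_{\{u_i<k\}}+\lambda_{i,k}$, where $\lambda_{i,k}\to\mu_{is}$ narrowly. One then solves an auxiliary problem whose datum dominates these two data simultaneously, applies the comparison result \cite[Lemma 6.8]{PV1} at each level $k$ to get $v_k\ge\max\{u_{1,k},u_{2,k}\}$, and passes to the limit via the stability theorem of \cite{DMOP} only in the sequence $\{v_k\}$; the convergence $u_{i,k}\uparrow u_i$ is automatic, so no identification of limits with prescribed solutions is ever needed. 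If you want to repair your proof, this truncation device is the step to incorporate; the comparison of smooth or finite-energy approximations, which is where you put the emphasis, is the routine part.
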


\begin{proof}
For $i=1, 2$, let $u_{i,k}=\min\{u_i,k\}$ ($k=1,2, \dots$). Then $u_{i,k}$ is the bounded renormalized solution of 
	\begin{equation*}
	\left\{ \begin{array}{ll}
	-{\rm div}\, \mathcal{A}(x,\nabla u_{i,k}) =\mu_{i0}|_{\{u_i<k\}}  + \lambda_{i,k} \quad \text{in } \Om, \\
	u_{i,k} = 0 \quad \text{on } \partial \Om. 
	\end{array}
	\right.
	\end{equation*}
	Here $\mu_i=\mu_{i0}+\mu_{is}$ ($i=1,2$) is the decomposition of $\mu_i$ 
	used in Sec. \ref{Quasilinear} above,   where  $\mu_{i0}, \mu_{is} \in \mathcal{M}_{b}^+(\Omega)$,  
$\mu_{i0}<< {\rm cap}_p$, and $\mu_{is}$ is concentrated on a set of zero $p$-capacity. Moreover, $\lambda_{i,k}\in\mathcal{M}_{b}^+(\Omega)$ and $\lambda_{i,k} \rightarrow \mu_{is}$ in the narrow topology of measures as $k\rightarrow\infty$ (see Definition \ref{defrs}).
	
	Now let $v_k$ ($k=1,2, \dots$) be a renormalized solution of 
	 \begin{equation*}
	 \left\{ \begin{array}{ll}
	 -{\rm div}\, \mathcal{A}(x,\nabla v_{k}) = \sum_{i=1}^2 ( \mu_{i0} + \lambda_{i,k}) + (\nu-\mu_1) +(\nu-\mu_2) \quad \text{in } \Om, \\
	 v_{k} = 0 \quad \text{on } \partial \Om. 
	 \end{array}
	 \right.
	 \end{equation*}
	 
	 Then by \cite[Lemma 6.8]{PV1} we deduce $v_k\geq \max\{u_{1,k}, u_{2,k}\}$ for all $k\geq 1$. Finally, we use the stability results of \cite{DMOP} to find a 
	 subsequence of $\{v_k\}$ that converges a.e. to a desired function $v$.
\end{proof}


\begin{thebibliography}{xxxxxx}
\bibitem{AH} D. R. Adams and L. I. Hedberg, \emph{ Function Spaces and Potential
Theory}, Grundlehren der math. Wissenschaften {\bf 314}, Berlin-Heidelberg-New York: Springer, 1996.


\bibitem{BB6} P. B\'enilan, L. Boccardo, T. Gallou\"et, R. Gariepy, M. Pierre, and J. L. V\'azquez. 
 An $L^1$ theory of existence and uniqueness of nonlinear elliptic equations, 
 \emph{ Ann. Scuola Norm. Sup. Pisa, Cl. Sci. (4)} {\bf 22} (1995), 241--273.

\bibitem{BV} M.-F. Bidaut-V\'eron. 
Removable singularities and existence for a quasilinear equation with absorption or source term and measure data, 
\emph{ Adv. Nonlin. Stud.} {\bf 3} (2003), 25--63.


\bibitem{BG1} L. Boccardo and  T. Gallou\"et. 
Nonlinear elliptic and parabolic equations involving measure data, 
\emph{ J. Funct. Anal.} {\bf 87} (1989), 149--169.

\bibitem{BG2} L. Boccardo and  Gallou\"et.  
Nonlinear elliptic equations with right hand side measures, 
\emph{ Comm. PDE} {\bf 17} (1992), 641--655.

\bibitem{BGO} L. Boccardo, T. Gallou\"et, and L. Orsina.  
Existence and uniqueness of entropy solutions for nonlinear 
elliptic equations with measure data, 
\emph{  Ann. Inst. H. Poincar\'e Anal. Non Lin\'eaire} {\bf 13} (1996), 539--551.

\bibitem{BK} H. Brezis and S. Kamin. 
\emph{ Sublinear elliptic
equations in $\RR^n$}, Manuscr. Math. {\bf 74} (1992), 87--106.


\bibitem{CV1} D. T. Cao and I. E. Verbitsky.
Finite energy solutions 
of quasilinear elliptic equations with sub-natural growth terms, 
 \emph{ Calc. Var. PDE} {\bf 52} (2015), 529--546.

\bibitem{CV2} D. T. Cao and I. E. Verbitsky. 
Nonlinear elliptic equations and intrinsic potentials of Wolff type, 
\emph{ J. Funct. Anal.} {\bf 272} (2017), 112--165.


\bibitem{DMM} G. Dal Maso  and A.  Malusa.   
Some properties of reachable solutions of nonlinear elliptic equations with measure data,  
\emph{ Ann. Scuola Norm. Sup. Pisa, Cl. Sci. (4)} {\bf 25} (1997),  375--396. 


\bibitem{Da} A. Dall'Aglio.   
Approximated solutions of equations with $L^1$ data. 
Application to the H-convergence of quasi-linear parabolic equations, 
\emph{ Ann. Mat. Pura  Appl.} {\bf 170} (1996), 207--240. 

\bibitem{DMOP} G. Dal Maso, F. Murat, A. Orsina, and A. Prignet. 
Renormalized solutions of
elliptic equations with general measure data, \emph{ Ann. Scuola Norm. Sup. Pisa, Cl. Sci.  (4)} {\bf 28} (1999), 741--808.

\bibitem{DZ} H. Dong and H. Zhu.  
Gradient estimates for singular $p$-Laplace type equations with measure data, 
arXiv:2102.08584.

\bibitem{DM1} F. Duzaar and G. Mingione.  
Gradient estimates via linear and nonlinear potentials, 
\emph{ J. Funct. Anal.} {\bf 259} (2010), 2961--2998.

\bibitem{DM2} F. Duzaar and G. Mingione.
Gradient estimates via non-linear potentials,   \emph{ Amer. J. Math.} {\bf133} (2011), 1093--1149. 

\bibitem{Giu} E. Giusti, \emph{ Direct Methods in the Calculus of Variations}, 
River Edge, NJ: World Scientific,   2003.


\bibitem{HW} L. I. Hedberg and T. H. Wolff.
 Thin sets in nonlinear potential theory, 
\emph{ Ann. Inst. Fourier (Grenoble)} {\bf 33} (1983),  161--187.

\bibitem{HKM} J. Heinonen, T. Kilpel\"ainen,  and   O. Martio,  
\emph{ Nonlinear Potential Theory of Degenerate Elliptic Equations}, 
Oxford: Oxford Univ. Press, 1993. 

\bibitem{KKT} T. Kilpel\"ainen, T. Kuusi, and A. Tuhola-Kujanp\"a\"a.  
Superharmonic functions are locally renormalized solutions, 
\emph{ Ann. Inst. H. Poincar\'e Anal. Non Lin\'eaire} {\bf 28} (2011), 775--795.

\bibitem{KM1} T. Kilpel\"ainen and J. Mal\'y. 
Degenerate elliptic equations with measure data and
nonlinear potentials, 
\emph{ Ann. Scuola Norm. Sup. Pisa, Cl. Sci. (4)} {\bf 19} (1992), 591--613.

\bibitem{KM2} T. Kilpel\"ainen and J. Mal\'y.  
The Wiener test and potential estimates for quasilinear elliptic equations,
\emph{ Acta Math.} {\bf 172} (1994), 137--161.	

\bibitem{KX} T. Kilpel\"ainen and  X. Xu.
 On the uniqueness problem for quasilinear elliptic equations involving measures,  \emph{ Rev. Mat. Iberoamer.} {\bf 12} (1996),  461--475.

\bibitem{Kra} M. A. Krasnoselskii, 
\emph{ Positive Solutions of Operator Equations},  
Groningen: P. Noordhoff, 1964. 
 

\bibitem{KuMi} T. Kuusi and G.  Mingione.   
Linear potentials in nonlinear potential theory, 
\emph{ Arch. Ration. Mech. Anal.} {\bf 207} (2013),  215--246.

\bibitem{MP} A. Malusa and M. M. Porzio.  
Renormalized solutions to elliptic equations with measure data in
unbounded domains, \emph{ Nonlin. Anal.} {\bf 67} (2007) 2370--2389.

\bibitem{Maz} V. G. Maz'ya, \emph{ Sobolev Spaces, with Applications to Elliptic Partial Differential
Equations}, Second, Revised and Augm. Ed., Grundlehren der math. Wissenschaften {\bf 342}, Heidelberg: Springer,  2011.

\bibitem{Min} G. Mingione.
The Calder\'on-Zygmund theory for elliptic problems 
with measure data, 
 \emph{ Ann. Scuola Norm. Sup. Pisa, Cl. Sci. (5)} {\bf 6} (2007), 195--261.

\bibitem{HP1} Q.-H. Nguyen and N. C. Phuc.  
Good-$\lambda$  and Muckenhoupt-Wheeden type bounds in quasilinear measure datum 
problems, with applications,  
\emph{ Math. Ann.} {\bf 374} (2019), 67--98.
	
\bibitem{HP2} Q.-H. Nguyen and N. C. Phuc.  
Pointwise gradient estimates for a class of singular quasilinear equation 
with measure data, 
\emph{ J.  Funct. Anal.} {\bf 278} (2020), no. 5, 108391, 35 pp. 
	
\bibitem{HP3} Q.-H. Nguyen and N. C. Phuc. 	
Existence and regularity estimates for quasilinear equations 
with measure data: the case $1<p \leq \frac{ 3n-2}{2n-1}$, 
\emph{ Analysis \& PDE} (in press), arXiv:2003.03725.

 \bibitem{HP4} Q.-H. Nguyen and N. C. Phuc. 	
A comparison estimate for singular p-Laplace equations and its consequences, 
	arXiv:2202.11318.
	
\bibitem{Ph17} N. C. Phuc.  
A sublinear Sobolev inequality for $p$-superharmonic 
functions, \emph{ Proc. Amer. Math. Soc.} {\bf 145} (2017), 327--334.	
	
\bibitem{PV1} N. C. Phuc and I. E. Verbitsky.  
Quasilinear and Hessian equations of Lane-Emden type, 
\emph{ Ann. Math.} {\bf 168} (2008), 859--914.

\bibitem{PV2} N. C. Phuc and I. E. Verbitsky.  
 Singular quasilinear and Hessian  equations and inequalities, 
\emph{ J. Funct. Anal.} {\bf 256} (2009), 1875--1906.

\bibitem{SV} A. Seesanea and I. E. Verbitsky.  
Finite energy solutions to inhomogeneous nonlinear elliptic equations with 
sub-natural growth terms,   
\emph{ Adv. Calc. Var.}  {\bf 13}   (2020),  53--74.

\bibitem{TW} N. S. Trudinger and X. J. Wang.  
On the weak continuity of elliptic operators and applications to potential
theory, 
\emph{ Amer. J. Math.} {\bf 124} (2002), 369--410.
	
\bibitem{Ver1} I. E. Verbitsky. 
Bilateral estimates of solutions to  quasilinear elliptic equations 
with sub-natural growth terms,  \emph{ Adv. Calc. Var.} (in press),  {http://dx.doi.org/10.1515/acv-2021-0004.} 
 
 \bibitem{Ver2} I. E. Verbitsky.    
Global pointwise estimates of positive solutions to sublinear equations,  
  \emph{ St. Petersburg Math. J.} (in press), reprint of \emph{Algebra i Analiz} {\bf 34}, no. 3 (2022), 297--330,    
 arXiv:2203.0253.  
 
 
 


\end{thebibliography}
\end{document}